\newtheorem{theo}{Theorem}[section]
\numberwithin{equation}{section}
\newcommand{\lbl}[1]{\label{#1}}
\newcommand{\be}{\begin{equation}}
\newcommand{\ee}{\end{equation}}
\newcommand\bes{\begin{eqnarray}} \newcommand\ees{\end{eqnarray}}
\newcommand{\bess}{\begin{eqnarray*}}
\newcommand{\eess}{\end{eqnarray*}}
\newcommand{\bbbb}{\left\{\begin{aligned}}
\newcommand{\nnnn}{\end{aligned}\right.}
\newcommand{\bea}{\begin{align*}}
\newcommand{\eea}{\end{align*}}
\newcommand\ep{\varepsilon}
\newcommand\kk{\left}
\newcommand\rr{\right}
\newcommand\dd{\displaystyle}
\newcommand\vp{\varphi}
\newcommand\lm{\lambda}
\newcommand\nm{\nonumber}
\newcommand\yy{\infty}
\newcommand\qq{\eqref}
\newcommand\ud{\underline}
\newcommand\oo{\Omega}
\let\oldequation\equation
\let\oldendequation\endequation
\renewenvironment{equation}
{\linenomathNonumbers\oldequation}
{\oldendequation\endlinenomath}
\begin{document}\thispagestyle{empty}
\setlength{\abovedisplayskip}{7pt}
\setlength{\belowdisplayskip}{7pt}

\begin{center}{\Large\bf On a reaction-diffusion virus model with general}\\[1mm]
 {\Large\bf boundary conditions in heterogeneous environments}\\[1mm]
Mingxin Wang\footnote{ {\sl E-mail}: mxwang@hpu.edu.cn. Mingxin Wang was supported by National Natural Science Foundation of China Grant 12171120.}\\
{\small School of Mathematics and Information Science, Henan Polytechnic University, Jiaozuo 454003, China}\\
Lei Zhang\footnote{The corresponding author, {\sl E-mail}: zhanglei890512@gmail.com. Lei Zhang was supported by the National Natural Science Foundation of China (12471168, 12171119) and the Fundamental Research Funds for the Central Universities (GK202304029, GK202306003, GK202402004). }\\
{\small School of Mathematics and Statistics, Shaanxi Normal University, Xi'an 710119, China}
\end{center}

\begin{quote}
\noindent{\bf Abstract.} To describe the propagation of West Nile virus and/or Zika virus, in this paper, we propose and study a time-periodic reaction-diffusion model with general boundary conditions in heterogeneous environments and with four unknowns: susceptible host, infectious host, susceptible vector and infectious vector. We can prove that such problem has a positive time periodic solution if and only if host and vector persist and the basic reproduction ratio is greater than one, and moreover the positive time periodic solution is unique and globally asymptotically stable when it exists.
%Our approach is the upper and lower solutions method, which can greatly simplify
%the analysis by Magal et al. \cite{MWW18} and Li \& Zhao \cite{LZ21}.

	\noindent{\bf Keywords:} Reaction-diffusion virus model; Heterogeneous structure; Positive $T$-periodic solutions; Existence and uniqueness; Global stability.

\noindent \textbf{AMS Subject Classification (2020)}: 35K57, 37N25, 35B40
\end{quote}

\pagestyle{myheadings}
\section{Introduction}{\setlength\arraycolsep{2pt}
\markboth{\rm$~$ \hfill A reaction-diffusion virus model\hfill $~$}{\rm$~$ \hfill M.X. Wang \& L. Zhang\hfill $~$}

Vector-host epidemic models are used to describe the dynamics of disease transmission between vectors (such as mosquitoes) and hosts (such as humans, birds). Such models usually take into account the transmission dynamics of the disease, including the infection, spread and recovery process of the disease. Vector-host epidemic models typically include reaction-diffusion equations that concern the spatial structure, i.e. the random movement of vector and host  populations over a geographic area, and can explain the geographic spread of disease, as well as the spatial spread of vector and host populations. In vector-host epidemic models, diseases usually spread between vectors and hosts in a cross-over fashion. That is, the infected vector transmits the disease to the susceptible host, and the susceptible vector becomes infected through interaction with the infected host.

Zika virus is one kind of host-vector disease, which is transmitted by mosquitoes, mainly through the bite of Aedes aegypti mosquitoes, and has become a significant public health threat due to its rapid spread and potential links to severe birth defects and neurological disorders. Fitzgibbon et al. \cite{FMW17} established a reaction-diffusion model to understand how spatial heterogeneity of the vector and host populations influences the dynamics of the outbreak, in both the geographical spread and the final size of the epidemic. Magal et al. \cite{MWW18} utilized the theory of asymptotically autonomous semiflows to conduct a detailed analysis of the autonomous reaction-diffusion model and proved that the basic reproduction ratio $R_0$ serves as a threshold value for the evolution dynamics of the model: if $R_0\le 1$ the disease free equilibrium is globally stable; if $R_0>1$ the model has a unique globally stable positive equilibrium. Considering the variation of seasonality, Li and Zhao \cite{LZ21} proposed a time periodic reaction-diffusion model for the Zika virus and studied its dynamics using chain transitive sets theory. Additionally, several reaction-diffusion models have been developed to describe the spatial spread of the Zika virus, including those in \cite{MLM19, Cao23}. Very recently, the first author of this paper and his collaborators \cite{Wang24, WZ24} employed the upper and lower solutions method to analyze these models. Compared to previous methods, the upper and lower solutions method can significantly shorten the length of the proof. Clearly, this approach is much more fundamental and reader-friendly.

West Nile (WN) virus is also one kind of host-vector disease. Mosquitoes transmit WNv to various hosts when they bite and feed on blood. Birds serve as the natural reservoirs for WNv, as they can be infected by the virus and can develop high viral titers in their blood, which can then infect mosquitoes that bite them (\cite{Cam02, Kom}). Wonham et al. \cite{WCL2004PRSLB} proposed an ODE model derived from the classical SIR model to investigate the dynamics of the West Nile virus. Lewis et al. \cite{LRD2006BMB} then studied the spread rates of the West Nile virus using a reaction-diffusion model.
The authors of \cite{Fan} proposed and studied a delayed differential equation model for the transmission of the West Nile virus between vectors (mosquitoes) and avian hosts (birds). Considering seasonal variations, Li et al. \cite{LLZ2020JNS} introduced a periodic time delays in their model and explored the dynamcs. There have been many investigations into the West Nile virus, including studies such as \cite{CCCetal2016DCDSB,LZ2017JMB}.

As far as we know, the previously focused host-vector models are either ordinary differential systems or reaction-diffusion systems in which the density of susceptible hosts do not change over time (constant, or a function of $x$). As pointed out by the authors in \cite{FMW17}, these models are applicable during the early phase of the epidemic, when the outbreak does not significantly alter the local geographic and demographic structure of the host population. To better understand the spread throughout the various stages of an epidemic, it is reasonable to consider that the density of susceptible hosts also changes over time, taking into account the birth and death of the hosts.

Based on the discussions of \cite{WCL2004PRSLB,LRD2006BMB,FMW17,LZ21}, we make the following  assumptions to understand how the spatiotemporal heterogeneity, birth, death, environmental crowding and boundary conditions of the vector and host populations influence the dynamics of the epidemic outbreak.\vspace{-1.5mm}
 \begin{enumerate}[$(1)$]
 \item The living environment of hosts and vectors is spatiotemporal heterogeneity and time periodic (seasonal reason).
 \item  Only one host species (humans, or birds) and one vector species (mosquitoes) are concerned.
   \item All uninfected hosts and vectors are susceptible.
 \item All infectious hosts and vectors give birth to uninfected offspring.
 \item Hosts and vectors carrying virus cannot lose its infection.
  \item The infected hosts and vector do not die of the
virus, and are not in any other way affected by the virus.
\item Pay attention to the diffusion, birth, death and environmental crowding (intraspecific competition) of hosts and vectors.
 \item Uninfected and infected species within the same species have the same diffusion patterns and boundary conditions.
 \vspace{-1.5mm}
\end{enumerate}

Let $H_u(x,t)$, $H_i(x,t)$, $V_u(x,t)$ and $V_i(x,t)$ be the densities of susceptible hosts, infected hosts, susceptible vectors, and infected vectors at location $x$ and time $t$, respectively. Then $H(x,t)=H_u(x,t)+H_i(x,t)$ and $V(x,t)=V_u(x,t)+V_i(x,t)$ are the total densities of hosts and vectors.
Denote by, respectively, $d_1(x,t)$ and $d_2(x,t)$ the host and vector diffusion rates at location $x$ and time $t$, $a_1(x,t)$ and $a_2(x,t)$ the birth rates of susceptible hosts and vectors, $b_1(x,t)$ and $b_2(x,t)$ the death rates of susceptible  hosts and vectors, $c_1(x,t)$ and $c_2(x,t)$ the loss rates of hosts and vectors population due to environmental crowding, $\ell_1(x,t)$ and $\ell_2(x,t)$ the transmission rates of uninfected host and uninfected vectors. The concerned model of this paper are the following reaction-diffusion system
\bes\left\{\begin{aligned}
	&\partial_t H_u=\nabla\cdot d_1(x,t)\nabla H_u+a_1(x,t)(H_u+H_i)-b_1(x,t)H_u\\
	&\hspace{17mm}-c_1(x,t)(H_u+H_i)H_u
	-\ell_1(x,t)H_uV_i,&&x\in\oo,\; t>0,\\
	&\partial_t H_i=\nabla\cdot d_1(x,t)\nabla H_i+\ell_1(x,t)H_uV_i-b_1(x,t)H_i-c_1(x,t)(H_u+H_i)H_i,\!\!&&x\in\oo,\; t>0,\\
	&\partial_t V_u=\nabla\cdot d_2(x,t)\nabla V_u+a_2(x,t)(V_u+V_i)-b_2(x,t)V_u\\
	&\hspace{17mm}
	-c_2(x,t)(V_u+V_i)V_u-\ell_2(x,t)H_iV_u,&&x\in\oo,\; t>0,\\
	&\partial_t V_i=\nabla\cdot d_2(x,t)\nabla V_i+\ell_2(x,t)H_iV_u-b_2(x,t)V_i-c_2(x,t)(V_u+V_i)V_i,&&x\in\oo,\; t>0,\\
	&\alpha_1\dd\frac{\partial H_u}{\partial\nu}+\beta_1(x,t)H_u=\alpha_1\dd\frac{\partial H_i}{\partial\nu}+\beta_1(x,t)H_i=0,&&x\in\partial\oo,\; t>0,\\[1mm]
	&\alpha_2\dd\frac{\partial V_u}{\partial\nu}+\beta_2(x,t)V_u=\alpha_2\dd\frac{\partial V_i}{\partial\nu}+\beta_2(x,t)V_i=0,&&x\in\partial\oo,\; t>0,\\
	&\big(H_u, H_i, V_u, V_i\big)=\big(H_{u0}(x), H_{i0}(x), V_{u0}(x), V_{i0}(x)),&&x\in\oo,\; t=0,\vspace{-2mm}
\end{aligned}\rr.\label{1.1}\ees
where $\nu$ is the outward normal vector of $\partial\Omega$, $\nabla\cdot d(x,t)\nabla U=\nabla\cdot[d(x,t)\nabla U]$, and the coefficient functions satisfy \vspace{-7pt}
\begin{enumerate}
	\item[{\bf(H)}]\, All coefficient functions are $T$-periodic in time $t$, H\"{o}lder continuous, nonnegative and nontrivial; $d_j(x,t), a_j(x,t), c_j(x,t)>0$ in $\overline Q_T$, and there exists $(x_0,t_0)\in\overline Q_T$ such that $\ell_j(x_0,t_0)>0$; and $\nabla_xd_j$ is H\"{o}lder continuous in $\overline Q_T$, $j=1,2$.
	\item[{\bf(B)}]\, Either
	
	(i) $\alpha_j\equiv 0$, $\beta_j(x,t)\equiv 1$ for $j=1,2$; or
	
	(ii) $\alpha_j\equiv 1$, $\beta_j(x,t)\geq 0$ with $\beta_j\in C^{1+\alpha,\, (1+\alpha)/2}(\overline S_T)$ for $j=1,2$.\vspace{-5pt}
\end{enumerate}
The initial functions $H_{u0}, H_{i0}, V_{u0}, V_{i0}\in C^1(\bar \Omega)$ be positive  and satisfy the compatibility conditions:
\bess
&\alpha_1\dd\frac{\partial H_{u0}}{\partial\nu}+\beta_1(x,0)H_{u0}=\alpha_1\frac{\partial H_{i0}}{\partial\nu}+\beta_1(x,0)H_{i0}=0,\;\;x\in\partial\oo,&\\[1.2mm]
&\alpha_2\dd\frac{\partial V_{u0}}{\partial\nu}+\beta_2(x,0)V_{u0}=\alpha_2\frac{\partial V_{i0}}{\partial\nu}+\beta_2(x,0)V_{i0}=0,\;\;x\in\partial\oo.&
\eess

In this paper, we study the model \eqref{1.1} and prove the existence, uniqueness and global stability of the positive $T$-periodic solution in time $t$, using the upper and lower solutions method rather than abstract chain transitive sets theory and asymptotically autonomous semi-flows employed in \cite{LZ21} and \cite{MWW18}.

In Section 2, we first determine the {\it Basic Reproduction Number} $\mathcal{R}_0$, and then prove that a positive time periodic solution exists if and only if $\mathcal{R}_0>1$, and that it is unique whenever it exists (Theorem \ref{th2.1}). In Section 3, we study the global asymptotic stabilities of nonnegative time periodic solutions. In Section 4, we present the dynamics of the autonomous case and provide explicit expressions for the constant case under Neumann boundary conditions. Additionally, we investigate the effect of parameters on the Basic Reproduction Number through numerical simulations in Section 5. The last section is a brief discussion. Some ideas of this paper come from \cite{Wang24} and \cite{WZ24}. For the sake of convenience, this paper always presents the results from the perspective of the West Nile virus model.

Throughout this paper, we denote
  \[Q_T=\oo\times(0,T],\;\;\;S_T=\partial\oo\times(0,T].\]

\section{Positive time periodic solutions}\setcounter{equation}{0} {\setlength\arraycolsep{2pt}

In this section, we will demonstrate the existence and uniqueness of positive solutions for the following time periodic problem associated to \qq{1.1}:
 \bes\left\{\begin{aligned}
&\partial_t{\mathsf H}_u=\nabla\cdot d_1(x,t)\nabla{\mathsf H}_u+
a_1(x,t)({\mathsf H}_u+{\mathsf H}_i)-b_1(x,t){\mathsf H}_u\\
&\hspace{17mm}-c_1(x,t)({\mathsf H}_u+{\mathsf H}_i){\mathsf H}_u
-\ell_1(x,t){\mathsf H}_u{\mathsf V}_i,&&(x,t)\in Q_T,\\
&\partial_t{\mathsf H}_i=\nabla\cdot d_1(x,t)\nabla{\mathsf H}_i+
\ell_1(x,t){\mathsf H}_u{\mathsf V}_i-b_1(x,t){\mathsf H}_i
-c_1(x,t)({\mathsf H}_u+{\mathsf H}_i){\mathsf H}_i,&&(x,t)\in Q_T,\\
&\partial_t{\mathsf V}_u=\nabla\cdot d_2(x,t)\nabla{\mathsf V}_u+
a_2(x,t)({\mathsf V}_u+{\mathsf V}_i)-b_2(x,t){\mathsf V}_u\\
&\hspace{17mm}-c_2(x,t)({\mathsf V}_u+{\mathsf V}_i){\mathsf V}_u
-\ell_2(x,t){\mathsf H}_i{\mathsf V}_u,&&(x,t)\in Q_T,\\
&\partial_t{\mathsf V}_i=\nabla\cdot d_2(x,t)\nabla{\mathsf V}_i+\ell_2(x,t){\mathsf H}_i{\mathsf V}_u-b_2(x,t) {\mathsf V}_i
-c_2(x,t)({\mathsf V}_u+{\mathsf V}_i){\mathsf V}_i,&&(x,t)\in Q_T,\\
&\alpha_1\dd\frac{\partial{\mathsf H}_u}{\partial\nu}+\beta_1(x,t){\mathsf H}_u
=\alpha_1\dd\frac{\partial{\mathsf H}_i}{\partial\nu}
+\beta_1(x,t){\mathsf H}_i=0,&&(x,t)\in S_T,\\[1mm]
&\alpha_2\dd\frac{\partial{\mathsf V}_u}{\partial\nu}+\beta_2(x,t){\mathsf V}_u
=\alpha_2\dd\frac{\partial{\mathsf V}_i}{\partial\nu}+\beta_2(x,t)
{\mathsf V}_i=0,&&(x,t)\in S_T,\\
&({\mathsf H}_u, {\mathsf H}_i, {\mathsf V}_u, {\mathsf V}_i)(x, 0)=({\mathsf H}_u, {\mathsf H}_i, {\mathsf V}_u, {\mathsf V}_i)(x, T),&&x\in\oo.
 \end{aligned}\rr.\label{2.1}\ees

We first consider the time periodic problem of logistic type equations, with $j=1,2$,
 \[\left\{\begin{array}{lll}
\partial_t{\mathsf U}-\nabla\cdot d_j(x,t)\nabla{\mathsf U}=\big(a_j(x,t)-b_j(x,t)\big){\mathsf U}
-c_j(x,t){\mathsf U}^2,\;&(x,t)\in Q_T,\\[2mm]
\alpha_j\dd\frac{\partial{\mathsf U}}{\partial\nu}+\beta_j(x,t){\mathsf U}=0,\;&(x,t)\in S_T,\\[2mm]
{\mathsf U}(x,0)={\mathsf U}(x,T),\;&x\in\oo.
 \end{array}\right.\eqno(2.2_j)\]
If $\big({\mathsf H}_u(x,t), {\mathsf H}_i(x,t), {\mathsf V}_u(x,t), {\mathsf V}_i(x,t)\big)$ is a nonnegative solution of \eqref{2.1}, then ${\mathsf H}(x,t)={\mathsf H}_u(x,t)+{\mathsf H}_i(x,t)$ and ${\mathsf V}(x,t)={\mathsf V}_u(x,t)+{\mathsf V}_i(x,t)$ satisfy $(2.2_1)$ and $(2.2_2)$, respectively.

Let $\zeta_j$ be the principal eigenvalue of
 \[\begin{cases}
\partial_t\varphi-\nabla\cdot d_j(x,t)\nabla\varphi+\big(b_j(x,t)-a_j(x,t)\big)\varphi=
 \zeta\varphi,\;&(x,t)\in Q_T,\\[1mm]
\alpha_j\dd\frac{\partial\varphi}{\partial\nu}+\beta_j(x,t)\varphi=0,\;&(x,t)\in S_T,\\[1mm]
\varphi(x,0)=\varphi(x,T),\;&x\in\oo
	\end{cases}\eqno(2.3_j)\]
with $j=1,2$, and define
 \bess
 Y_j=C(\overline\Omega,\mathbb{R})\;\;{\rm when}\;\;\alpha_j\equiv 1,\;\;\;
  Y_j=\{u\in C(\overline\Omega,\mathbb{R}):\,u|_{\partial \Omega}=0\}\;\;{\rm when}\;\;\alpha_j\equiv 0,\eess
and
  \bess
C_T(\mathbb{R},Y_j)=\{ \mathsf{U} \in  C(\mathbb{R},Y_j):\,\mathsf{U}  (t)=\mathsf{U} (t+T), ~ t\in \mathbb{R} \},\eess
which is equipped with the maximum norm.
Let $\{\Phi_j(t,s): t\ge s\}$ be the $T$-periodic evolution family on $Y_j$  determined by the following equation:
 \[\begin{cases}
\partial_t u-\nabla\cdot d_j(x,t)\nabla u +b_j(x,t)u=0,&x\in\Omega,\;t>0,\\[1mm]
\alpha_j\dd\frac{\partial u}{\partial\nu}+\beta_j(x,t)u=0,&x\in\partial\Omega,\;t>0,\\[1mm]
\end{cases}\]
that is, $\Phi_j(t,s)\phi=u_j(t,s,\cdot;\phi)$, where $u_j$ is the solution of the above equations with $u_j(s,s,x;\phi)=\phi(x)$.
Define
 \[F_j(t) \mathsf{U}(t) =a_j(\cdot,t) \mathsf{U}(t), \;\;~ \mathsf{U} \in C_T(\mathbb{R},Y_j).\]

In view of  $b_j(x,t)>0$, we have $\omega(\Phi_j)<0$, that is, the exponential growth bound of $\Phi_j$ are negative. We can define the following two operators:
 \bess
 L_j[\mathsf{U}](t)&=&\int_{-\infty}^{t} \Phi_j(t,s) F_j(s)\mathsf{U}(s) \mathrm{d}s,\\
 \hat L_j[\mathsf{U}](t)&=&F_j(t)\int_{-\infty}^{t} \Phi_j(t,s)\mathsf{U}(s) \mathrm{d}s\eess
with $\mathsf{U}\in C_T(\mathbb{R},Y_j)$, $j=1,2$,
and define the {\it Basic Reproduction Numbers} by
 \[\mathcal{R}_{0,j}=r(L_j)=r(\hat L_j).\]
Then $\mathcal{R}_{0,j}-1$ and $-\zeta_j$ share the same sign by \cite[Theorem 5.7]{thieme2009spectral} or  \cite[Theorem 3.7]{LZZ2019R0} with $\tau=0$.
According to \cite[Theorem 28.1]{Hess}, $(2.2_j)$ has no positive solution when $\mathcal{R}_{0,j} \le 1$. It concludes\vspace{-2mm} that
\begin{enumerate}%[leftmargin=1mm]
\item[{\rm(i)}]\, ${\mathsf H}(x,t)=0$, i.e., ${\mathsf H}_u(x,t)={\mathsf H}_i(x,t)=0$, and then ${\mathsf V}_i(x,t)=0$ by the equations of ${\mathsf V}_i$ when $\mathcal{R}_{0,1} \le 1$;
\item[{\rm(ii)}]\,${\mathsf V}(x,t)=0$, i.e., ${\mathsf V}_u(x,t)={\mathsf V}_i(x,t)=0$, and then ${\mathsf H}_i(x,t)=0$ by the equations of ${\mathsf H}_i$ when $\mathcal{R}_{0,2} \le 1 $.\vspace{-2mm}
\end{enumerate}

In the following we assume that $\mathcal{R}_{0,j}>1$, $j=1,2$. Then $(2.2_1)$ and $(2.2_2)$  have unique positive solutions ${\mathsf H}(x,t)$ and ${\mathsf V}(x,t)$, respectively, and ${\mathsf H}(x,t)$ and ${\mathsf V}(x,t)$ are globally asymptotically stable in the periodic sense.\setcounter{equation}{3}

Therefore, investigating the positive periodic solutions of \eqref{2.1} can be transformed into finding the positive periodic solutions $({\mathsf H}_i, {\mathsf V}_i)$ of
 \bes\left\{\begin{aligned}
&\partial_t{\mathsf H}_i=\nabla\cdot d_1(x,t)\nabla{\mathsf H}_i
+\ell_1(x,t)({\mathsf H}(x,t)-{\mathsf H}_i){\mathsf V}_i\\
&\hspace{17mm}
-\big[b_1(x,t)+c_1(x,t){\mathsf H}(x,t)\big]{\mathsf H}_i,&&(x,t)\in Q_T,\\
&\partial_t{\mathsf V}_i=\nabla\cdot d_2(x,t)\nabla{\mathsf V}_i+
\ell_2(x,t)({\mathsf V}(x,t)-{\mathsf V}_i){\mathsf H}_i\\
&\hspace{17mm}-\big[b_2(x,t)+c_2(x,t){\mathsf V}(x,t)\big]{\mathsf V}_i,&&(x,t)\in Q_T,\\
&\alpha_1\dd\frac{\partial{\mathsf H}_i}{\partial\nu}
+\beta_1(x,t){\mathsf H}_i=\alpha_2\dd\frac{\partial{\mathsf V}_i}
{\partial\nu}+\beta_2(x,t){\mathsf V}_i=0,&&(x,t)\in S_T,\\
&{\mathsf H}_i(x,0)={\mathsf H}_i(x,T),\;\; {\mathsf V}_i(x,0)={\mathsf V}_i(x,T),&&x\in\oo
 \end{aligned}\rr.\label{2.4}\ees
satisfying ${\mathsf H}_i<{\mathsf H}$, ${\mathsf V}_i<{\mathsf V}$ in $Q_T$.

We then consider the following eigenvalue problem:
 \bes\left\{\begin{aligned}
&\partial_t\phi_1-\nabla\cdot d_1(x,t)\nabla\phi_1-\ell_1(x,t){\mathsf H}(x,t)\phi_2\\
&\hspace{18mm}+\big[b_1(x,t)+c_1(x,t){\mathsf H}(x,t)\big]\phi_1
=\lm\phi_1,&&(x,t)\in Q_T,\\
&\partial_t\phi_2-\nabla\cdot d_2(x,t)\nabla\phi_2-\ell_2(x,t){\mathsf V}(x,t)\phi_1\\
&\hspace{18mm}
+\big[b_2(x,t)+c_2(x,t){\mathsf V}(x,t)\big]\phi_2=\lm\phi_2,&&(x,t)\in Q_T,\\
&\alpha_1\dd\frac{\partial\phi_1}{\partial\nu}+\beta_1(x,t)\phi_1=
\alpha_2\dd\frac{\partial\phi_2}{\partial\nu}+\beta_2(x,t)\phi_2=0,&&(x,t)\in S_T,\\
&\phi_1(x,0)=\phi_1(x,T), \ \ \phi_2(x,0)=\phi_2(x,T),&&x\in\oo,
 \end{aligned}\rr.\label{2.5}\ees
which is the eigenvalue problem associated with linearizing system for \qq{2.4} at $(0,0)$. By \cite[Theorem 2.1]{WZ24}, the problem \qq{2.5} has a unique principal eigenvalue $\lm({\mathsf H}, {\mathsf V})$ with positive eigenfunction $\phi=(\phi_1,\phi_2)^T$.
Define
 \[X=Y_1\times Y_2,\]
and
 \[C_T(\mathbb{R},X)=\{\mathsf{U}\in C(\mathbb{R},X):\mathsf{U}(t)=\mathsf{U}(t+T), ~ t\in \mathbb{R}\},\]
which is equipped with the maximum norm.

Let $\{\Phi(t,s): t\ge s\}$ be the $T$-periodic evolution family on $X$ determined by the following system:
 \bess\left\{\begin{aligned}
	&\partial_t u_1-\nabla\cdot d_1(x,t)\nabla u_1
	+\big[b_1(x,t)+c_1(x,t){\mathsf H}(x,t)\big] u_1
	=0,&&x\in\Omega,\;t>0,\\
	&\partial_t u_2-\nabla\cdot d_2(x,t)\nabla u_2
	+\big[b_2(x,t)+c_2(x,t){\mathsf V}(x,t)\big] u_2=0,&&x\in\Omega,\;t>0,\\
	&\alpha_1\dd\frac{\partial u_1}{\partial\nu}+\beta_1(x,t) u_1=
 \alpha_2\dd\frac{\partial u_2}{\partial\nu}
 +\beta_2(x,t)u_2=0,&&x\in\partial\Omega,\;t>0.
  \end{aligned}\rr.\eess
Define
 \bess
 F (t)\mathsf{U}(t)=\begin{pmatrix}
	l_1(\cdot,t)\mathsf{H}(\cdot,t)\mathsf{U}_2(t)\\
	l_2(\cdot,t)\mathsf{V}(\cdot,t)\mathsf{U}_1(t)
\end{pmatrix}, ~ \;\;\mathsf{U}=\begin{pmatrix}
 \mathsf{U}_1\\
 \mathsf{U}_2
  \end{pmatrix}\in C_T(\mathbb{R},X).\eess
It is not hard to verify that $\omega(\Phi)<0$. We can define the following two operators:
 \bess
\mathcal{L}[\mathsf{U}](t)&=&\int_{-\infty}^{t} \Phi(t,s) F(s) \mathsf{U}(s) \mathrm{d} s,\\
~ \hat{\mathcal{L}}[\mathsf{U}](t)&=&F(t)\int_{-\infty}^{t} \Phi(t,s) \mathsf{U}(s) \mathrm{d} s
  \eess
with $\mathsf{U} \in  C_T(\mathbb{R},X)$, and define the {\it Basic Reproduction Number} by
  \[\mathcal{R}_{0}=r(\mathcal{L})=r(\hat{\mathcal{L}}).\]
Then $\mathcal{R}_{0}-1$ and $-\lm({\mathsf H}, {\mathsf V})$ have the same sign by \cite[Theorem 5.7]{thieme2009spectral} or  \cite[Theorem 3.7]{LZZ2019R0} with $\tau=0$. Now we can present the main result of this section.

\begin{theo}\lbl{th2.1} Assume that $\mathcal{R}_{0,j}>1$, $j=1,2$.  Then problem \eqref{2.4} has a positive solution $({\mathsf H}_i, {\mathsf V}_i)$ if and only if $\mathcal{R}_0>1$. Moreover, the positive solution $({\mathsf H}_i, {\mathsf V}_i)$ of \eqref{2.4} is unique and satisfies
 \[{\mathsf H}_i<{\mathsf H},\;\;\;{\mathsf V}_i<{\mathsf V}
 \;\;\;\mbox{in}\;\; Q_T\]
when it exists. Therefore, if $\mathcal{R}_0>1$, then \eqref{2.1} has a unique positive solution
 \[({\mathsf H}_u, {\mathsf H}_i, {\mathsf V}_u, {\mathsf V}_i)=({\mathsf H}-{\mathsf H}_i, {\mathsf H}_i, {\mathsf V}-{\mathsf V}_i, {\mathsf V}_i).\]
 \end{theo}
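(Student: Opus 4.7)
The plan is to prove, in turn, an a priori strict inequality, the necessity of $\mathcal{R}_0>1$, existence via monotone iteration, and uniqueness by a sweeping argument, and finally to transfer the conclusion back to \eqref{2.1}. First, given any positive periodic solution $(\mathsf{H}_i,\mathsf{V}_i)$ of \eqref{2.4}, I would set $w=\mathsf{H}-\mathsf{H}_i$; subtracting the first equation of \eqref{2.4} from $(2.2_1)$ shows that $w$ satisfies the linear periodic problem
\begin{equation*}
\partial_t w-\nabla\cdot d_1\nabla w+[b_1+c_1\mathsf{H}+\ell_1\mathsf{V}_i]\,w=a_1\mathsf{H},
\end{equation*}
with the associated homogeneous boundary condition. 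Since $a_1\mathsf{H}$ is nonnegative and nontrivial, the strong maximum principle for periodic linear parabolic problems yields $w>0$, i.e., $\mathsf{H}_i<\mathsf{H}$ in $Q_T$; an analogous argument gives $\mathsf{V}_i<\mathsf{V}$.

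With this inequality in hand, necessity is immediate: rewriting the first equation of \eqref{2.4} as
\begin{equation*}
\partial_t\mathsf{H}_i-\nabla\cdot d_1\nabla\mathsf{H}_i-\ell_1\mathsf{H}\mathsf{V}_i+[b_1+c_1\mathsf{H}]\mathsf{H}_i=-\ell_1\mathsf{H}_i\mathsf{V}_i<0,
\end{equation*}
and similarly for $\mathsf{V}_i$, exhibits $(\mathsf{H}_i,\mathsf{V}_i)$ as a positive strict subsolution of \eqref{2.5} at $\lambda=0$, which forces $\lambda(\mathsf{H},\mathsf{V})<0$ and therefore $\mathcal{R}_0>1$. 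For sufficiency I assume $\lambda(\mathsf{H},\mathsf{V})<0$ and construct an ordered upper/lower solution pair: $(\mathsf{H},\mathsf{V})$ is an upper solution, because the cross-transmission terms vanish there and each component has the strictly positive slack $a_j\mathsf{H}$ (resp.\ $a_j\mathsf{V}$); and $\epsilon(\phi_1,\phi_2)$ is a lower solution, since substituting into the first equation of \eqref{2.4} leaves the residual $\epsilon\phi_1(\lambda+\epsilon\ell_1\phi_2)\le 0$ once $\epsilon>0$ is small enough. Shrinking $\epsilon$ further so that $\epsilon(\phi_1,\phi_2)\le(\mathsf{H},\mathsf{V})$ and then absorbing a large linear term $K(\mathsf{H}_i,\mathsf{V}_i)$ on both sides of the reaction to make the system quasi-monotone on $\{0\le\mathsf{H}_i\le\mathsf{H},\,0\le\mathsf{V}_i\le\mathsf{V}\}$, the standard monotone iteration then produces both a minimal and a maximal positive periodic solution of \eqref{2.4}.

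Uniqueness is obtained from a strict subhomogeneity/sweeping argument. For any positive solution $(\mathsf{H}_i,\mathsf{V}_i)$ and $\kappa\in(0,1)$, the scaled pair $\kappa(\mathsf{H}_i,\mathsf{V}_i)$ is a strict subsolution, since the bilinear term $-\ell_j\mathsf{H}_i\mathsf{V}_i$ scales as $\kappa^2$ while the linear part scales as $\kappa$, producing the positive gap $\kappa(1-\kappa)\ell_1\mathsf{H}_i\mathsf{V}_i$. Given two positive solutions $u^1,u^2$, define $\kappa_\ast=\sup\{\kappa>0:\kappa u^1\le u^2\}$, which is positive and finite by the a priori bound; if $\kappa_\ast<1$ then $\kappa_\ast u^1$ is a strict subsolution lying below the solution $u^2$, and the strong maximum principle applied componentwise to $u^2-\kappa_\ast u^1$ produces a strict gap incompatible with the maximality of $\kappa_\ast$. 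Hence $\kappa_\ast\ge 1$, $u^1\le u^2$, and by symmetry $u^1\equiv u^2$; in particular the minimal and maximal solutions from the iteration coincide. Finally, the correspondence $(\mathsf{H}_u,\mathsf{H}_i,\mathsf{V}_u,\mathsf{V}_i)=(\mathsf{H}-\mathsf{H}_i,\mathsf{H}_i,\mathsf{V}-\mathsf{V}_i,\mathsf{V}_i)$, together with $\mathsf{H}_i<\mathsf{H}$, $\mathsf{V}_i<\mathsf{V}$, transfers both existence and uniqueness from \eqref{2.4} back to \eqref{2.1}.

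The most delicate step is the monotone iteration: the cooperative (quasi-monotone) structure of \eqref{2.4} is valid only inside the region $\{0<\mathsf{H}_i<\mathsf{H},\,0<\mathsf{V}_i<\mathsf{V}\}$, since otherwise the coefficient $\ell_1(\mathsf{H}-\mathsf{H}_i)$ changes sign, so I must verify that every iterate remains trapped between the sub- and supersolution. The a priori inequality from the first step is exactly what makes $(\mathsf{H},\mathsf{V})$ admissible as an upper solution, and a compatible choice of the small $\epsilon$ and the large $K$ keeps the scheme well-defined; once this invariance is secured, the rest of the argument relies on standard periodic Schauder theory, the strong maximum principle, and the principal eigenvalue comparison.
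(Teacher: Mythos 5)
Your proof is correct in substance and shares the paper's skeleton (ordered upper/lower solutions for existence, a sweeping argument for uniqueness), but it departs from the paper's route in three ways worth noting. First, for necessity the paper writes a positive periodic solution through the Poincar\'e map $Q(T,0)$ of the linearization and invokes \cite[Theorem 3.2(iv)]{Am76} to show that $r(Q(T,0))=1$ is incompatible with the nonnegative nontrivial remainder $G$; you instead exhibit $({\mathsf H}_i,{\mathsf V}_i)$ as a positive strict subsolution of \eqref{2.5} at $\lambda=0$ and conclude $\lambda({\mathsf H},{\mathsf V})<0$. These are the same fact in two guises, but your version silently uses the comparison property of the principal eigenvalue of a cooperative periodic-parabolic system (a positive strict subsolution forces the principal eigenvalue to be negative), which is precisely what the paper's Poincar\'e-map computation establishes; you should either cite that property or run the evolution-family argument. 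Second, you take $({\mathsf H},{\mathsf V})$ itself as the strict upper solution (with slack $a_1{\mathsf H}>0$, $a_2{\mathsf V}>0$) and obtain the a priori bound ${\mathsf H}_i<{\mathsf H}$, ${\mathsf V}_i<{\mathsf V}$ up front from the linear periodic equation satisfied by ${\mathsf H}-{\mathsf H}_i$ with source $a_1{\mathsf H}$ (the same computation the paper performs in Step 3 of Theorem \ref{th3.1}). This is leaner than the paper's {\bf Conclusion-$\ep$}, which perturbs to \eqref{2.7}--\eqref{2.8} with $\pm\ep\varphi_j$ and a $(\cdot)^+$ truncation and then sets $\ep=0$; the price of your economy is that you do not produce the $\ep$-family of auxiliary periodic solutions that Section 3 reuses for global stability, so your argument proves Theorem \ref{th2.1} but cannot simply be substituted for the paper's. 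Third, your uniqueness sweep is the paper's $\bar s$-argument followed by symmetry rather than a second sweep with $\ud k$; both are fine, and your computation of the gap $\kappa(1-\kappa)\ell_1{\mathsf H}_i{\mathsf V}_i$ is the correct subhomogeneity estimate. The only points needing more care are the Dirichlet case $\alpha_j=0$, where the orderings $\ep(\phi_1,\phi_2)\le({\mathsf H},{\mathsf V})$, the positivity of $\kappa_*$, and the strict gap $w\ge\tau u^1$ all compare functions vanishing on $\partial\Omega$ and hence require the Hopf boundary lemma (as the paper does when proving \eqref{2.9} and \eqref{equ:main:step1}), and the nontriviality of the gap terms, which rests on hypothesis {\bf(H)} guaranteeing $\ell_j>0$ only at some point $(x_0,t_0)$ and should be said explicitly.
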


\begin{proof}\;  {\bf The necessity}. We rewrite \eqref{2.4} as follows:
	 \bess\left\{\begin{aligned}
	&\partial_t{\mathsf H}_i-\nabla\cdot d_1(x,t)\nabla{\mathsf H}_i -p_{11}(x,t){\mathsf H}_i-p_{12}(x,t){ \mathsf V}_i - \lm({\mathsf H}, {\mathsf V}){\mathsf H}_i\\
	&\hspace{37mm}=-\ell_1(x,t){\mathsf H}{\mathsf V}_i -\lm({\mathsf H}, {\mathsf V}){\mathsf H}_i,
	&&(x,t)\in Q_T,\\
	&\partial_t{\mathsf V}_i-\nabla\cdot d_2(x,t)\nabla{\mathsf V}_i -p_{21}(x,t) {\mathsf H}_i -p_{22}(x,t){\mathsf V}_i- \lm({\mathsf H}, {\mathsf V}){\mathsf V}_i\\
	&\hspace{37mm}=-\ell_2(x,t){\mathsf V}{\mathsf H}_i -\lm({\mathsf H}, {\mathsf V}){\mathsf V}_i,&&(x,t)\in Q_T,\\
	&\alpha_1\dd\frac{\partial{\mathsf H}_i}{\partial\nu}
+\beta_1(x,t){\mathsf H}_i=\alpha_2\dd\frac{\partial{\mathsf V}_i}
{\partial\nu}+\beta_2(x,t){\mathsf V}_i=0,&&(x,t)\in S_T,\\
&{\mathsf H}_i(x,0)={\mathsf H}_i(x,T),\;\; {\mathsf V}_i(x,0)={\mathsf V}_i(x,T),&&x\in\oo
	\end{aligned}\rr.\eess
where $p_{11}(x, t)=b_1(x,t)+c_1(x,t){\mathsf H}(x,t)$, $p_{12}(x,t)=\ell_1(x, t){\mathsf H}(x,t)$, $p_{21}(x, t)=\ell_2(x,t){\mathsf V} (x,t)$, $p_{22}(x, t)=b_2(x,t)+c_2(x,t){\mathsf V} (x,t)$.
		
We first consider the case where $\alpha_1=\alpha_2=1$. Let $\{Q(t,s):t \ge s\}$ be the $T$-periodic evolution family on $C(\overline{\Omega},\mathbb{R}^2)$ determined by the following system
 	\bes\left\{\begin{aligned}
	&\partial_t{\mathsf H}_i-\nabla\cdot d_1(x,t)\nabla{\mathsf H}_i -p_{11}(x,t){\mathsf H}_i-p_{12}(x,t){\mathsf V}_i - \lm({\mathsf H}, {\mathsf V}){\mathsf H}_i=0,
&&x\in\Omega,\;t>0,\\
&\partial_t{\mathsf V}_i-\nabla\cdot d_2(x,t)\nabla{\mathsf V}_i -p_{21}(x,t) {\mathsf H}_i -p_{22}(x,t){\mathsf V}_i- \lm({\mathsf H}, {\mathsf V}){\mathsf V}_i=0,&&x\in\Omega,\;t>0,\\
&\alpha_1\dd\frac{\partial{\mathsf H}_i}{\partial\nu}
+\beta_1(x,t){\mathsf H}_i=\alpha_2\dd\frac{\partial{\mathsf V}_i}
{\partial\nu}+\beta_2(x,t){\mathsf V}_i=0,&&x\in\partial\Omega,\;t>0.
 \end{aligned}\rr.
  \label{2.6}\ees
Then the Poincar\'{e} map $Q(T,0)$ is strongly positive and compact on $C(\overline{\Omega},\mathbb{R}^2)$. Since $\phi=(\phi_1,\phi_2)>0$ satisfies \qq{2.5}, we have $\phi(\cdot,0)=\phi(\cdot,T)=Q(T,0)\phi(\cdot,0)$. It then follows that $r\big(Q(T,0)\big)=1$. Denote $\Psi(\cdot,t)=\big({\mathsf H}_i(\cdot,t), {\mathsf V}_i(\cdot,t)\big)$. We then have
  \bess
r(Q(T,0)\big)\Psi(\cdot,0)=\Psi(\cdot,0)=\Psi(\cdot,T)=Q(T,0)\Psi(\cdot,0)-\int_0^T Q(T,s)G(s;\Psi(\cdot,s)\big){\rm d}s.
 \eess
where
  \[G(s;\Psi(\cdot,s))=\left(\begin{array}{cc}\ell_1(\cdot,s){\mathsf H}_i(\cdot,s){\mathsf V}_i(\cdot,s)+\lm({\mathsf H}, {\mathsf V}){\mathsf H}_i(\cdot,s)\\[1mm] \ell_2(\cdot,s){\mathsf V}_i(\cdot,s){\mathsf H}_i(\cdot,s)+\lm({\mathsf H}, {\mathsf V}){\mathsf V}_i(\cdot,s) \end{array}\right). \]
We assume on the contradiction that $\mathcal{R}_0\le1$. Then $\lm({\mathsf H}, {\mathsf V})\geq 0$, and thus
  \[G(s;\Psi(\cdot,s))=\left(\begin{array}{cc} \ell_1(\cdot,s){\mathsf H}_i(\cdot,s){\mathsf V}_i(\cdot,s)+\lm({\mathsf H}, {\mathsf V}){\mathsf H}_i(\cdot,s)\\[1mm] \ell_2(\cdot,s){\mathsf V}_i(\cdot,s){\mathsf H}_i(\cdot,s)+\lm({\mathsf H}, {\mathsf V}){\mathsf V}_i(\cdot,s) \end{array}\right)\ge
\left(\begin{array}{cc} \ell_1(\cdot,s){\mathsf H}_i(\cdot,s){\mathsf V}_i(\cdot,s)\\[1mm] \ell_2(\cdot,s){\mathsf V}_i(\cdot,s){\mathsf H}_i(\cdot,s) \end{array}\right). \]
Noticing that $\ell_j(\cdot,s){\mathsf V}_i(\cdot,s){\mathsf H}_i(\cdot,s)\ge 0,\,\not\equiv 0$, $j=1,2$. This is impossible by the conclusion \cite[Theorem 3.2 (iv)]{Am76}.

For the case $\alpha_1=\alpha_2=0$, $\{Q(t,s):t \ge s\}$ is the $T$-periodic evolution family on $C_0^1(\overline{\Omega},\mathbb{R}^2)$ determined by \eqref{2.6}. The Poincar\'{e} map $Q(T,0)$ is strongly positive and compact on $C_0^1(\overline{\Omega},\mathbb{R}^2)$. The remaining parts of the proof are similar.

{\bf The sufficiency}. Assume $\mathcal{R}_0>1$, which is equivalent to $\lm({\mathsf H}, {\mathsf V})<0$. We shall show that \qq{2.4} has a unique positive solution $({\mathsf H}_i, {\mathsf V}_i)$, and ${\mathsf H}_i<{\mathsf H}$, ${\mathsf V}_i<{\mathsf V}$. For this purpose and for subsequent applications, we first prove the following general \vspace{-4mm} conclusion.
  \begin{enumerate}[$(1)$]%[leftmargin=0.1mm]
\item[{\bf\quad Conclusion-$\ep$}:]\; Let $\varphi_j(x,t)$ be the positive eigenfunction corresponding to $\zeta_j$ of $(2.3_j)$. Normalize $\varphi_j$ by $\|\varphi_j\|_{L^{\infty}(Q_T)}=1$. Then there is $0<\ep_0\ll 1$ such that, when $|\ep|\le\ep_0$, problems
 \bes\left\{\begin{aligned}
&\partial_t{\mathsf H}_i=\nabla\cdot d_1(x,t)\nabla{\mathsf H}_i+\ell_1(x,t)\big({\mathsf H}(x,t)
+\ep\varphi_1(x,t)-{\mathsf H}_i\big)^+ {\mathsf V}_i\\
 &\hspace{17mm}-\big[b_1(x,t)+c_1(x,t)\big({\mathsf H}(x,t)-\ep\varphi_1(x,t)\big)\big]
 {\mathsf H}_i,\!\!&&(x,t)\in Q_T,\\
&\partial_t{\mathsf V}_i=\nabla\cdot d_2(x,t)\nabla{\mathsf V}_i+\ell_2(x,t)\big({\mathsf V}(x,t)
+\ep\varphi_2(x,t)-{\mathsf V}_i\big)^+ {\mathsf H}_i\\
&\hspace{17mm}-\big[b_2(x,t)+c_2(x,t)\big({\mathsf V}(x,t)-\ep\varphi_2(x,t)\big)\big]
 {\mathsf V}_i,&&(x,t)\in Q_T,\\
 &\alpha_1\dd\frac{\partial{\mathsf H}_i}{\partial\nu}+\beta_1(x,t){\mathsf H}_i=
\alpha_2\dd\frac{\partial{\mathsf V}_i}{\partial\nu}+\beta_2(x,t){\mathsf V}_i=0,
&&(x,t)\in S_T,\\[.1mm]
 &{\mathsf H}_i(x, 0)={\mathsf H}_i(x, T),\;\;\; {\mathsf V}_i(x, 0)={\mathsf V}_i(x, T),&&x\in\Omega
 \end{aligned}\rr.\label{2.7}\ees
and
  \bes\left\{\begin{aligned}
&\partial_t{\mathsf H}_i=\nabla\cdot d_1(x,t)\nabla{\mathsf H}_i+\ell_1(x,t)\big({\mathsf H}(x,t)
+\ep\varphi_1(x,t)-{\mathsf H}_i) {\mathsf V}_i\\
&\hspace{17mm}-\big[b_1(x,t)+c_1(x,t)\big({\mathsf H}(x,t)-\ep\varphi_1(x,t)\big)\big]{\mathsf H}_i ,&&(x,t)\in Q_T,\\
&\partial_t{\mathsf V}_i=\nabla\cdot d_2(x,t)\nabla{\mathsf V}_i+\ell_2(x,t)\big({\mathsf V}(x,t)
+\ep\varphi_2(x,t)-{\mathsf V}_i){\mathsf H}_i\\
 &\hspace{17mm}-\big[b_2(x,t)+c_2(x,t)\big({\mathsf V}(x,t)-\ep\varphi_2(x,t)\big)\big]
 {\mathsf V}_i,&&(x,t)\in Q_T,\\
 &\alpha_1\dd\frac{\partial{\mathsf H}_i}{\partial\nu}+\beta_1(x,t){\mathsf H}_i=\alpha_2\dd\frac{\partial{\mathsf V}_i}{\partial\nu}+\beta_2(x,t){\mathsf V}_i=0,
&&(x,t)\in S_T,\\[.1mm]
 &{\mathsf H}_i(x, 0)={\mathsf H}_i(x, T),\;\;\; {\mathsf V}_i(x, 0)={\mathsf V}_i(x, T),&&x\in\Omega
 \vspace{-2mm}\end{aligned}\rr.\label{2.8}\ees
 have, respectively, unique positive solutions $(\widehat{{\mathsf H}}_i^\ep, \widehat{{\mathsf V}}_i^\ep)$ and $({\mathsf H}_i^\ep, {\mathsf V}_i^\ep)$. Moreover,
 \bess
 \widehat{{\mathsf H}}_i^\ep,\,{\mathsf H}_i^\ep<{\mathsf H}+\ep\varphi_1, \;\;\;\widehat{{\mathsf V}}_i^\ep,\,{\mathsf V}_i^\ep<{\mathsf V}+\ep\varphi_2\;\;\;\mbox{in}\;\; Q_T.
 \eess
Therefore, $(\widehat{{\mathsf H}}_i^\ep, \widehat{{\mathsf V}}_i^\ep)=({\mathsf H}_i^\ep, {\mathsf V}_i^\ep)$ in $Q_T$,
and \qq{2.7} and \qq{2.8}\vspace{-1mm} are equivalent.
\end{enumerate}

Here, we only discuss the problem \eqref{2.7} as \qq{2.8} can be addressed in a similar manner.

{\it Existence of positive solutions of \qq{2.7}}.
Let $\lm({\mathsf H}, {\mathsf V}; \ep)$ be the principal eigenvalue of
\bes\left\{\begin{aligned}
&\partial_t\phi_1-\nabla\cdot d_1(x,t)\nabla\phi_1- \ell_1(x,t)\big[{\mathsf H}(x,t)+\ep\varphi_1(x,t)\big]\phi_2\nm\\
&\hspace{12mm}+\big[b_1(x,t)+c_1(x,t)\big({\mathsf H}(x,t)-\ep\varphi_1(x,t)\big)\big]\phi_1
 =\lm\phi_1,&&(x,t)\in Q_T,\\
&\partial_t\phi_2-\nabla\cdot d_2(x,t)\nabla\phi_2-\ell_2(x,t)
\big[{\mathsf V}(x,t)+\ep\varphi_2(x,t)\big]\phi_1\nm\\ &\hspace{12mm}+\big[b_2(x,t)+c_2(x,t)\big({\mathsf V}(x,t)-\ep\varphi_2(x,t)\big)\big]\phi_2
=\lm\phi_2,&&(x,t)\in Q_T,\\[.1mm]
&\alpha_1\dd\frac{\partial\phi_1}{\partial\nu}+\beta_1(x,t)\phi_1=
\alpha_2\dd\frac{\partial\phi_2}{\partial\nu}+\beta_2(x,t)\phi_2=0,&&(x,t)\in S_T,\\
 &\phi_1(x,0)=\phi_1(x,T), \;\; \phi_2(x,0)=\phi_2(x,T),&&x\in\Omega.
\vspace{-2mm}\end{aligned}\rr.\ees
In view of $\lm({\mathsf H}, {\mathsf V})<0$, there exists a $0<\ep_0\ll 1$ such that $\lm({\mathsf H}, {\mathsf V}; \ep)<0$ for all $|\ep|\le\ep_0$ by the continuity of $\lm({\mathsf H}, {\mathsf V}; \ep)$ in $\ep$, and
 \bes\left\{\begin{aligned}
&{\mathsf H}(x,t)\pm\ep\varphi_1(x,t)>0, \;\; {\mathsf V}(x,t)\pm\ep\varphi_2(x,t)>0,&&(x,t)\in Q_T, \\[0.5mm]
 &a_1(x,t)[{\mathsf H}(x,t)+\ep\varphi_1(x,t)]>\ep\varphi_1(x,t)(\ep c_1(x,t)\varphi_1(x,t)-\zeta_1),&&(x,t)\in Q_T,\\[0.5mm]
&a_2(x,t)[{\mathsf V}(x,t)+\ep\varphi_2(x,t)]>\ep\varphi_2(x,t)(\ep c_2(x,t)\varphi_2(x,t)-\zeta_2),&&(x,t)\in Q_T.
 \end{aligned}\rr.\label{2.9}\ees
In the case where $\alpha_1 =1$ and $\alpha_2 =1$, \qq{2.9} can be derived directly. For the case where $\alpha_1=0$ or $\alpha_2=0$, \qq{2.9} can be obtained using the method employed to prove the following \qq{equ:main:step1}. We omit the details here, as the proof of \qq{2.9} is simpler than that of \qq{equ:main:step1}.

Making use of \qq{2.9} we can show that $({\mathsf H}+\ep\varphi_1, {\mathsf V}+\ep\varphi_2)$ is a strict upper solution of \qq{2.7}.
Let $(\phi^\ep_1,\phi^\ep_2)$ be the positive eigenfunction corresponding to $\lm({\mathsf H}, {\mathsf V}; \ep)$. It is easy to verify that $\delta(\phi^\ep_1, \phi^\ep_2)$ is a lower solution of \qq{2.7} and 
 \bess
 \delta\big(\phi^\ep_1(x,t), \phi^\ep_2(x,t)\big)\le\big({\mathsf H}(x,t)+\ep\varphi_1(x,t), {\mathsf V}(x,t)+\ep\varphi_2(x,t)\big),\;\;\forall\;(x,t)\in Q_T
 \eess
provided that $\delta>0$ is suitably small. By the upper and lower solutions method (\cite[Theorem 7.15]{Wpara}), \qq{2.7} has at least one positive solution $(\widehat{{\mathsf H}}_i^\ep, \widehat{{\mathsf V}}_i^\ep)$, and
 \[\widehat{{\mathsf H}}_i^\ep(x,t)<{\mathsf H}(x,t)+\ep\varphi_1(x,t),\;\;\;
 \widehat{{\mathsf V}}_i^\ep(x,t)<{\mathsf V}(x,t)+\ep\varphi_2(x,t),\;\;\forall\;(x,t)\in Q_T.\]

{\it Uniqueness of positive solutions of \qq{2.7}}. Let $(\widetilde{{\mathsf H}}_i^\ep, \widetilde{{\mathsf V}}_i^\ep)$ be another positive solution of \qq{2.7}. We can find a constant $0<s<1$ such that $s(\widetilde{{\mathsf H}}_i^\ep, \widetilde{{\mathsf V}}_i^\ep)\le(\widehat{{\mathsf H}}_i^\ep, \widehat{{\mathsf V}}_i^\ep)$ in $Q_T$. Set
  \[\bar s=\sup\kk\{0<s\le 1: s(\widetilde{{\mathsf H}}_i^\ep, \widetilde{{\mathsf V}}_i^\ep)\le(\widehat{{\mathsf H}}_i^\ep, \widehat{{\mathsf V}}_i^\ep) \;{\rm ~ in  ~ } Q_T\rr\}.\]
Then $\bar s$ is well defined and $0<\bar s\le1$ and $\bar s(\widetilde{{\mathsf H}}_i^\ep, \widetilde{{\mathsf V}}_i^\ep)\le(\widehat{{\mathsf H}}_i^\ep, \widehat{{\mathsf V}}_i^\ep)$ in $Q_T$. We shall prove $\bar s=1$. We assume on the contrary that $\bar s<1$. Then ${\mathsf U}:=\widehat{{\mathsf H}}_i^\ep-\bar s\widetilde{{\mathsf H}}_i^\ep\ge 0$ and ${\mathsf Z}:=\widehat{{\mathsf V}}_i^\ep-\bar s\widetilde{{\mathsf V}}_i^\ep\ge 0$, as well as  $\bar s\widetilde{{\mathsf H}}_i^\ep\le\widehat{{\mathsf H}}_i^\ep<{\mathsf H}+\ep\varphi_1$ and $\bar s\widetilde{{\mathsf V}}_i^\ep\le\widehat{{\mathsf V}}_i^\ep<{\mathsf V}+\ep\varphi_2$ in $Q_T$. It follows that, in $Q_T$, 
 \bess
 &({\mathsf H}+\ep\varphi_1-\widehat{{\mathsf H}}_i^\ep)^+
 ={\mathsf H}+\ep\varphi_1-\widehat{{\mathsf H}}_i^\ep,&\\[0.5mm]
 &({\mathsf H}+\ep\varphi_1-\widetilde{{\mathsf H}}_i^\ep)^+
 <{\mathsf H}+\ep\varphi_1-\bar s\widetilde{{\mathsf H}}_i^\ep,&\\[0.5mm]
& ({\mathsf H}+\ep\varphi_1-\widehat{{\mathsf H}}_i^\ep)^+
-({\mathsf H}+\ep\varphi_1-\widetilde{{\mathsf H}}_i^\ep)^+
 >\bar s\widetilde{{\mathsf H}}_i^\ep-\widehat{{\mathsf H}}_i^\ep,&\\[0.5mm]
&({\mathsf V}+\ep\varphi_2-\widehat{{\mathsf V}}_i^\ep)^+
={\mathsf V}+\ep\varphi_2-\widehat{{\mathsf V}}_i^\ep,&\\[0.5mm] &({\mathsf V}+\ep\varphi_2-\widetilde{{\mathsf V}}_i^\ep)^+
 <{\mathsf V}+\ep\varphi_2-\bar s\widetilde{{\mathsf V}}_i^\ep,&\\[0.5mm]
&({\mathsf V}+\ep\varphi_2-\widehat{{\mathsf V}}_i^\ep)^+
-({\mathsf V}+\ep\varphi_2-\widetilde{{\mathsf V}}_i^\ep)^+
>\bar s\widetilde{{\mathsf V}}_i^\ep-\widehat{{\mathsf V}}_i^\ep.&
 \eess
After careful calculation, we can show that $({\mathsf U}, {\mathsf Z})$ satisfies
 \bes\left\{\begin{aligned}
&\partial_t{\mathsf U}>\nabla\cdot d_1(x,t)\nabla{\mathsf U}+\ell_1(x,t)\big[{\mathsf H}(x,t)
+\ep\varphi_1(x,t)-\widehat{{\mathsf H}}_i^\ep(x,t)\big]{\mathsf Z}\\[0.5mm]
 &\hspace{13mm}-\Big(b_1(x,t)+c_1(x,t)\big[{\mathsf H}(x,t)-\ep\varphi_1(x,t)\big]
+ \ell_1(x,t)\widetilde{{\mathsf V}}_i^\ep(x,t)\Big){\mathsf U},\;\;\;&&(x,t)\in Q_T,\\[0.5mm]
&\partial_t{\mathsf Z}>\nabla\cdot d_2(x,t)\nabla{\mathsf Z}+\ell_2(x,t)\big[{\mathsf V}(x,t)
+\ep\varphi_2(x,t)-\widehat{{\mathsf V}}_i^\ep(x,t)\big]{\mathsf U}\\[0.5mm]
 &\hspace{13mm}-\Big(b_2(x,t)+c_2(x,t)\big[{\mathsf V}(x,t)-\ep\varphi_2(x,t)\big]
+ \ell_2(x,t)\widetilde{{\mathsf H}}_i^\ep(x,t)\Big){\mathsf Z},&&(x,t)\in Q_T,\\[0.5mm]
&\alpha_1\dd\frac{\partial{\mathsf U}}{\partial\nu}+\beta_1(x,t){\mathsf U}=\alpha_2\dd\frac{\partial{\mathsf Z}}{\partial\nu}+\beta_2(x,t){\mathsf Z}=0,&&(x,t)\in S_T,\\[0.5mm]
&{\mathsf U}(x,0)={\mathsf U}(x,T),\ {\mathsf Z}(x,0)={\mathsf Z}(x,T),&&x\in\Omega.
 \end{aligned}\right.\qquad\lbl{2.10}\ees
In view of ${\mathsf H}+\ep\varphi_1-\widehat{{\mathsf H}}_i^\ep>0$, ${\mathsf V}+\ep\varphi_2-\widehat{{\mathsf V}}_i^\ep>0$ and ${\mathsf U}, {\mathsf Z}\ge 0$ in $Q_T$. It follows that ${\mathsf U},{\mathsf Z}>0$ in $Q_T$ by the maximum principle. Then there exists $0<\tau<1-\bar s$ such that $({\mathsf U}, {\mathsf Z})\ge\tau(\widetilde{{\mathsf H}}_i^\ep, \widetilde{{\mathsf V}}_i^\ep)$, i.e., $(\bar s+\tau)(\widetilde{{\mathsf H}}_i^\ep, \widetilde{{\mathsf V}}_i^\ep)\le(\widehat{{\mathsf H}}_i^\ep, \widehat{{\mathsf V}}_i^\ep)$ in $\overline Q_T$. This contradicts the definition of $\bar s$. Hence $\bar s=1$, i.e.,
 \[(\widetilde{{\mathsf H}}_i^\ep,\, \widetilde{{\mathsf V}}_i^\ep)\le(\widehat{{\mathsf H}}_i^\ep,\, \widehat{{\mathsf V}}_i^\ep)\;\;\;\mbox{in}\;\; Q_T.\]
Certainly, $\widetilde{{\mathsf H}}_i^\ep<{\mathsf H}+\ep\varphi_1$, $\widetilde{{\mathsf V}}_i^\ep<{\mathsf V}+\ep\varphi$, and then
 \bess
 ({\mathsf H}+\ep\varphi_1-\widetilde{{\mathsf H}}_i^\ep)^+
 ={\mathsf H}+\ep\varphi_1-\widetilde{{\mathsf H}}_i^\ep,\;\;\;
 ({\mathsf V}+\ep\varphi_2-\widetilde{{\mathsf V}}_i^\ep)^+
 ={\mathsf V}+\ep\varphi_2-\widetilde{{\mathsf V}}_i^\ep\;\;\;\mbox{in}\;\;Q_T.\eess

On the other hand, we can find $k>1$ such that
 \[k\big(\widetilde{{\mathsf H}}_i^\ep(x,t), \widetilde{{\mathsf V}}_i^\ep(x,t)\big)\ge\big(\widehat{{\mathsf H}}_i^\ep(x,t), \widehat{{\mathsf V}}_i^\ep(x,t)\big),\;\;\forall\;(x,t)\in Q_T.\]
Set
  \[\ud k=\inf\kk\{k\ge1: k(\widetilde{{\mathsf H}}_i^\ep, \widetilde{{\mathsf V}}_i^\ep)\ge(\widehat{{\mathsf H}}_i^\ep, \widehat{{\mathsf V}}_i^\ep)\;{\rm ~ in  ~ }\, Q_T\rr\}.\]
Then $\ud k$ is well defined, $\ud k\ge1$ and $\ud k(\widetilde{{\mathsf H}}_i^\ep, \widetilde{{\mathsf V}}_i^\ep)\ge(\widehat{{\mathsf H}}_i^\ep, \widehat{{\mathsf V}}_i^\ep)$ in $\overline Q_T$. If $\ud k>1$, then ${\mathsf P}:=\ud k\widetilde{{\mathsf H}}_i^\ep-\widehat{{\mathsf H}}_i^\ep\ge 0$ and ${\mathsf Q}:=\ud k\widetilde{{\mathsf V}}_i^\ep-\widehat{{\mathsf V}}_i^\ep\ge 0$. Similarly to the above, we can verify that $({\mathsf P}, {\mathsf Q})$ satisfies a system of differential inequalities similar to \qq{2.10} and derive ${\mathsf P}, {\mathsf Q}>0$ in $\oo\times[0,T]$ by the maximum principle, and there exists $0<r<\ud k-1$ such that $({\mathsf P}, {\mathsf Q})\ge r(\widetilde{{\mathsf H}}_i^\ep, \widetilde{{\mathsf V}}_i^\ep)$, i.e., $(\ud k-r)(\widetilde{{\mathsf H}}_i^\ep, \widetilde{{\mathsf V}}_i^\ep)\ge(\widehat{{\mathsf H}}_i^\ep, \widehat{{\mathsf V}}_i^\ep)$ in $Q_T$. This contradicts the definition of $\ud k$. Hence $\ud k=1$ and
 \[\big(\widetilde{{\mathsf H}}_i^\ep(x,t), \widetilde{{\mathsf V}}_i^\ep(x,t)\big)\ge\big(\widehat{{\mathsf H}}_i^\ep(x,t), \widehat{{\mathsf V}}_i^\ep(x,t)\big),\;\;\forall\;(x,t)\in\overline Q_T.\]
The uniqueness is obtained.

Taking $\ep=0$ in {\bf Conclusion-$\ep$}, the sufficiency and uniqueness are proved.
\end{proof}%\setcounter{equation}{2}

\section{Dynamical properties of \qq{1.1}}\lbl{s3}\setcounter{equation}{0}

In this section we study the stabilities of nonnegative time periodic solutions. We first state the existence and uniqueness of global solutions.

\begin{theo}\lbl{th3.a} The problem \qq{1.1} has a unique global solution $(H_u, H_i, V_u, V_i)$, which is positive and bounded.
\end{theo}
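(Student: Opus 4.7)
The plan follows the standard three-step scheme for semilinear parabolic systems: local existence and uniqueness, componentwise strict positivity via the maximum principle, and global extension through an a~priori $L^\infty$-bound obtained from the logistic structure of the totals $H:=H_u+H_i$ and $V:=V_u+V_i$. For the first step I would read \qq{1.1} as a semilinear abstract Cauchy problem on $[Y_1]^2\times[Y_2]^2$, with the linear part governed by the $T$-periodic evolution families $\{\Phi_1(t,s)\}$ and $\{\Phi_2(t,s)\}$ already introduced in Section~2 and the four reaction terms treated as polynomial, hence locally Lipschitz on bounded sets, perturbations. Hypothesis~\textbf{(H)} (H\"{o}lder regularity of the coefficients and of $\nabla_x d_j$) and the $C^1$ compatibility of the initial data then deliver, via standard parabolic theory, a unique classical solution on a maximal interval $[0,T_{\max})$.

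For strict positivity I would rewrite each equation so that its zeroth-order coefficient is bounded on compact time intervals and its right-hand side has a definite sign; for instance
\[
\partial_t H_i-\nabla\cdot d_1\nabla H_i+\big[b_1+c_1(H_u+H_i)\big]H_i=\ell_1 H_u V_i
\]
has a nonnegative right-hand side once $H_u,V_i\ge0$, so the strong maximum principle together with $H_{i0}>0$ yields $H_i>0$, while the analogous rearrangement
$\partial_t H_u-\nabla\cdot d_1\nabla H_u+[b_1+c_1(H_u+H_i)+\ell_1 V_i-a_1]H_u=a_1 H_i$
gives $H_u>0$; symmetric manipulations cover $V_u$ and $V_i$. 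The four arguments are run simultaneously on $[0,T_{\max})$: if one were to introduce a first extinction time $t^*\in(0,T_{\max})$ for, say, $H_u$, then all four components would remain nonnegative and bounded on $[0,t^*)$, the zeroth-order coefficient in the $H_u$-equation would be bounded there, and the strong maximum principle applied up to $t^*$ would force $H_u>0$ at the alleged extinction point, a contradiction.

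For boundedness I would add the first two and the last two equations of \qq{1.1}: the cross-infection terms $\mp\ell_1 H_u V_i$ and $\mp\ell_2 H_i V_u$ cancel, the boundary conditions coincide within each pair, and the totals solve the scalar logistic problems
\[
\partial_t H=\nabla\cdot d_1(x,t)\nabla H+(a_1(x,t)-b_1(x,t))H-c_1(x,t)H^2,\qquad \alpha_1\partial_\nu H+\beta_1 H=0,
\]
together with the exact analogue for $V$. Any constant $M$ with $M\ge\|H_{u0}+H_{i0}\|_\infty$ and $c_1(x,t)M\ge a_1(x,t)-b_1(x,t)$ on $\overline Q_T$ is a supersolution in either boundary alternative in~\textbf{(B)} (for $\alpha_j=0,\beta_j=1$ one needs only $M\ge 0$ at $\partial\Omega$; for $\alpha_j=1,\beta_j\ge 0$ one has $\beta_j M\ge 0$), so the parabolic comparison principle yields $H\le M$ on $[0,T_{\max})$ and similarly $V\le M'$. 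Since $0\le H_u,H_i\le H$ and $0\le V_u,V_i\le V$, the full solution is a~priori $L^\infty$-bounded, and the standard blow-up alternative then forces $T_{\max}=\infty$.

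I expect the main technical point to be the simultaneous positivity argument across the four coupled equations, in particular the bookkeeping of which components are already known to be nonnegative at each step, which is why I would phrase it as a contradiction at the first extinction time rather than a Picard scheme; the uniform treatment of the Dirichlet ($\alpha_j=0$) and Robin ($\alpha_j=1$) boundary alternatives is routine once the domain of each generator is chosen in accordance with~\textbf{(B)}.
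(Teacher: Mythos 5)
Your proposal is correct and follows essentially the same route as the paper's (outlined) proof: local existence and uniqueness by standard parabolic theory, positivity by the maximum principle applied componentwise, and global existence plus boundedness by observing that the totals $H=H_u+H_i$ and $V=V_u+V_i$ solve the scalar logistic problems \eqref{3.1}--\eqref{3.2} and comparing with constant supersolutions. The only cosmetic difference is that the paper obtains local existence via the upper and lower solutions method rather than an abstract semigroup formulation, which does not affect the argument.
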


\begin{proof} The proof is standard. For reader's convenience, we provide an outline of the proof. Firstly, the local existence can be proved by the upper and lower solutions method. The positivity and uniqueness  are a direct application of the maximum principle for parabolic equations. Let $T_{\rm max}$ be the maximum existence time of $(H_u, H_i, V_u, V_i)$ and set \[H(x,t)=H_u(x,t)+H_i(x,t),\;\;\;V(x,t)=V_u(x,t)+V_i(x,t).\]
Then $H$ and $V$ satisfy
 \bes\left\{\begin{aligned}
 &\partial_t H=\nabla\cdot d_1(x,t)\nabla H+\big(a_1(x,t)
 -b_1(x,t)\big)H-c_1(x,t)H^2,&&x\in\oo,\; 0<t<T_{\rm max},\\
&\alpha_1\dd\frac{\partial H}{\partial\nu}+\beta_1(x,t)H=0,&&x\in\partial\oo,\; 0<t<T_{\rm max},\\
&H(x,0)=H_u(x,0)+H_i(x,0)>0,&&x\in\oo
	\end{aligned}\rr.\lbl{3.1}\ees
and
 \bes\left\{\begin{aligned}
&\partial_t V=\nabla\cdot d_2(x,t)\nabla V+\big(a_2(x,t)
 -b_2(x,t)\big)V-c_2(x,t)V^2,&&x\in\oo,\;  0<t<T_{\rm max},\\
&\alpha_2\dd\frac{\partial V}{\partial\nu}+\beta_2(x,t)V=0,&&x\in\partial\oo,\; 0<t<T_{\rm max},\\
&V(x,0)=V_u(x,0)+V_i(x,0)>0,&&x\in\oo,
	\end{aligned}\rr.\lbl{3.2}\ees
respectively. By the maximum principle, $H$ and $V$ exist globally and are bounded, i.e., $T_{\rm max}=+\yy$. Consequently, $(H_u, H_i, V_u, V_i)$ exists globally and is positive and bounded.
\end{proof}

\begin{theo}\lbl{th3.1} Let $(H_u, H_i, V_u, V_i)$ be the unique positive solution of \qq{1.1}. \vspace{-2mm}
\begin{enumerate}[$(1)$]%[leftmargin=6mm]
\item\; In the case of $\mathcal{R}_{0,1}, \mathcal{R}_{0,2}>1$ and $\mathcal{R}_0>1$, we have that, in $[C^{2,1}(\overline Q_T)]^4$,
 \bes
&&\lim_{n\to+\yy}\big(H_u(x,t+nT),\, H_i(x,t+nT),\, V_u(x,t+nT),\, V_i(x,t+nT)\big)\nonumber\\[1mm]
&=&\big({\mathsf H}(x,t)-{\mathsf H}_i(x,t),\,{\mathsf H}_i(x,t),\, {\mathsf V}(x,t)-{\mathsf V}_i(x,t),\,{\mathsf V}_i(x,t)\big),
 \lbl{3.3}\ees
 where ${\mathsf H}(x,t)$, ${\mathsf V}(x,t)$ and $\big({\mathsf H}_i(x,t), {\mathsf V}_i(x,t)\big)$ are the unique positive solutions of $(2.2_1)$, $(2.2_2)$ and \qq{2.4}, respectively, and ${\mathsf H}_i(x,t)<{\mathsf H}(x,t), {\mathsf V}_i(x,t)<{\mathsf V}(x,t)$.

\item\; For the case of $\mathcal{R}_{0,1}, \mathcal{R}_{0,2}>1$ and $\mathcal{R}_0 \le 0$, we have that, in $[C^{2,1}(\overline Q_T)]^4$,
 \bes
\lim_{n\to+\yy}\big(H_u(x,t+nT),\,H_i(x,t+nT),\, V_u(x,t+nT),\, V_i(x,t+nT)\big)=\big({\mathsf H}(x,t), 0, {\mathsf V}(x,t), 0\big).
 \qquad\lbl{3.4}\ees

\item\; For the case of $\mathcal{R}_{0,1} \le 1$ and $\mathcal{R}_{0,2}>1$, we have  that, in $[C^{2,1}(\overline Q_T)]^4$,
 \bess
\lim_{n\to+\yy}\big(H_u(x,t+nT),\,H_i(x,t+nT),\, V_u(x,t+nT),\, V_i(x,t+nT)\big)=\big(0, 0, {\mathsf V}(x,t), 0\big).\eess

For the case of $\mathcal{R}_{0,2} \le 1 $ and $\mathcal{R}_{0,1}>1$, we have that, in $[C^{2,1}(\overline Q_T)]^4$,
 \bess
\lim_{n\to+\yy}\big(H_u(x,t+nT),\,H_i(x,t+nT),\, V_u(x,t+nT),\, V_i(x,t+nT)\big)=\big({\mathsf H}(x,t), 0, 0, 0\big).\eess

For the case of $\mathcal{R}_{0,1}, \mathcal{R}_{0,2} \le 1 $, we have that, in $[C^{2,1}(\overline Q_T)]^4$,
  \bess
  \lim_{n\to+\yy}\big(H_u(x,t+nT),\,H_i(x,t+nT),\, V_u(x,t+nT),\, V_i(x,t+nT)\big)=(0, 0, 0, 0).\eess
 \end{enumerate}\vspace{-2mm}
  \end{theo}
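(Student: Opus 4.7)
The plan is to decouple the four-component system by using that the total densities $H := H_u + H_i$ and $V := V_u + V_i$ satisfy the scalar periodic logistic problems \qq{3.1}--\qq{3.2}, uncoupled from the infectious components. By classical periodic-parabolic theory (\cite[Theorem 28.1]{Hess}) together with the a priori bounds of Theorem \ref{th3.a} and parabolic Schauder estimates, $H(\cdot,t+nT)\to\mathsf H$ in $C^{2,1}(\overline Q_T)$ when $\mathcal R_{0,1}>1$ and $H(\cdot,t+nT)\to 0$ when $\mathcal R_{0,1}\le 1$, with the analogous dichotomy for $V$. Since $H_u=H-H_i$ and $V_u=V-V_i$, it suffices to determine the limit of $(H_i, V_i)$ in each regime.

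For part $(1)$ I would run a sandwich argument built from \textbf{Conclusion-$\ep$} in the proof of Theorem \ref{th2.1}. Fix $\ep\in(0,\ep_0)$. By the first step there exists $N=N(\ep)$ such that $|\mathsf H-H|\le\ep\varphi_1$ and $|\mathsf V-V|\le\ep\varphi_2$ for all $t\ge NT$. Substituting $H_u=H-H_i$ and $V_u=V-V_i$ into the equations for $H_i, V_i$ and using these two-sided bounds, one verifies that on $[NT,\yy)$ the pair $(H_i, V_i)$ is a sub-solution of the cooperative auxiliary system \qq{2.7} at parameter $\ep$ and a super-solution of the analogous system at parameter $-\ep$. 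Since the Poincar\'e map of each auxiliary periodic system is strongly monotone and admits the unique positive fixed point supplied by \textbf{Conclusion-$\ep$}, standard results on monotone periodic-parabolic dynamics give
 \[(\widehat{\mathsf H}_i^{-\ep}, \widehat{\mathsf V}_i^{-\ep})\le\liminf_{n\to\yy}(H_i, V_i)(\cdot,t+nT)\le\limsup_{n\to\yy}(H_i, V_i)(\cdot,t+nT)\le(\widehat{\mathsf H}_i^{\ep}, \widehat{\mathsf V}_i^{\ep}).\]
Sending $\ep\to 0$ and invoking continuous dependence of Conclusion-$\ep$ solutions on $\ep$, together with the identification $(\widehat{\mathsf H}_i^{0}, \widehat{\mathsf V}_i^{0})=(\mathsf H_i, \mathsf V_i)$ from Theorem \ref{th2.1}, yields \qq{3.3}; parabolic Schauder estimates upgrade pointwise convergence to $C^{2,1}(\overline Q_T)$.

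For part $(2)$, $\mathcal R_0\le 1$ is equivalent to $\lm(\mathsf H, \mathsf V)\ge 0$, and Theorem \ref{th2.1} rules out any positive periodic solution of \qq{2.4}. I would argue by contradiction: if $(H_i, V_i)(\cdot,t+nT)$ did not converge to $(0,0)$, then by uniform boundedness and parabolic Schauder estimates some subsequence would converge in $C^{2,1}(\overline Q_T)$ to a nonnegative $T$-periodic $(\mathsf H_i^*, \mathsf V_i^*)$ solving \qq{2.4}; the strong maximum principle forces it to be either identically zero or strictly positive, and the latter contradicts Theorem \ref{th2.1}, establishing \qq{3.4}. Part $(3)$ follows from the reduction together with one-sided comparison: the subcritical logistic drives $H$ or $V$ to zero, whereupon the corresponding infectious component loses its source and is squeezed to zero by comparison against an exponentially decaying linear super-solution (using that the exponential growth bounds $\omega(\Phi_j)$ are negative); the remaining total density obeys its own logistic dichotomy.

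The principal technical obstacle is the sandwich in part $(1)$: the strict positivity estimates \qq{2.9} are needed for the auxiliary systems at parameters $\pm\ep$ to be well-posed with positive periodic attractors, and the cooperative structure of the $(H_i, V_i)$-subsystem is crucial so that two-sided comparison is available. The continuous dependence as $\ep\to 0$ is then a compactness statement that follows from the uniqueness conclusion of Theorem \ref{th2.1}.
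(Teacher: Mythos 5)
Your decomposition and your treatment of parts (1) and (3) follow the paper's own route: reduce to the scalar logistic totals $H$, $V$, establish the two-sided bounds $|{\mathsf H}-H|\le\ep\varphi_1$, $|{\mathsf V}-V|\le\ep\varphi_2$ for large $t$, and squeeze $(H_i,V_i)$ between solutions of the perturbed auxiliary problems at parameters $\pm\ep$, whose unique positive periodic solutions converge to $({\mathsf H}_i,{\mathsf V}_i)$ as $\ep\to0$. Your appeal to ``standard results on monotone periodic-parabolic dynamics'' is exactly what the paper implements by hand: it starts the $+\ep$ problem from the ordered upper periodic solution $k(\widehat{{\mathsf H}}_i^\ep,\widehat{{\mathsf V}}_i^\ep)$ and the $-\ep$ problem from the ordered lower solution $\delta(\phi^{-\ep}_1,\phi^{-\ep}_2)$, so that the period-shifts are monotone in $n$ and their limits are positive periodic solutions, identified through the uniqueness in Theorem \ref{th2.1}. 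Note that a strongly monotone Poincar\'e map with a unique \emph{positive} fixed point does not by itself force all positive orbits to converge to it (the trivial fixed point is still there), so the ordered upper and lower periodic solutions are the ingredient you must actually exhibit; since you name them, I read this part as essentially the paper's argument.

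Part (2) contains a genuine gap. From boundedness and Schauder estimates you extract a subsequence $(H_i,V_i)(\cdot,\cdot+n_kT)\to({\mathsf H}_i^*,{\mathsf V}_i^*)$ in $[C^{2,1}(\overline Q_T)]^2$ and assert that the limit is a nonnegative $T$-periodic solution of \qq{2.4}. Periodicity of the limit is not automatic: the identity $(H_i,V_i)(x,T+n_kT)=(H_i,V_i)(x,0+(n_k+1)T)$ only yields $({\mathsf H}_i^*,{\mathsf V}_i^*)(x,T)=\lim_k(H_i,V_i)(x,(n_k+1)T)$, and nothing forces the shifted subsequence $\{n_k+1\}$ to converge at $t=0$ to the same limit. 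In other words, the $\omega$-limit set of the Poincar\'e map need not consist of periodic orbits, so a bare compactness argument cannot invoke the nonexistence of positive periodic solutions. The paper closes this by monotonicity: it dominates $(H_i,V_i)$ from above by the solution of \qq{3.8} issued from a large constant that is an upper solution of \qq{2.7}; the period-shifts of that dominating solution are non-increasing in $n$, hence converge to a nonnegative periodic solution of \qq{2.7}, which must vanish because \qq{2.7} admits no positive periodic solution when $\lm({\mathsf H},{\mathsf V})>0$. The borderline case $\lm({\mathsf H},{\mathsf V})=0$ needs separate treatment, since then \qq{2.7} with $\ep>0$ \emph{does} have a positive periodic solution and one instead uses that it tends to $(0,0)$ as $\ep\to0$. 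You should replace the subsequence argument by this monotone comparison, or else supply a genuine justification (e.g.\ via internally chain transitive sets for the limiting periodic semiflow) that the $\omega$-limit set reduces to a periodic solution.
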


\begin{proof} By use of the regularity theory and compactness argument, it suffices to show that these limits hold uniformly in $\overline Q_T$.

(1)\, Assume that $\mathcal{R}_{0,1}, \mathcal{R}_{0,2}>1$ and $\mathcal{R}_0>1$.

Let $\varphi_j$ be the positive eigenfunction corresponding to $\zeta_j$ of $(2.3_j)$ with normalizing $\varphi_j$ by $\|\varphi_j\|_{L^{\infty}(Q_T)}=1$.

{\it Step 1}. Prove that for any given $\ep>0$ small enough, there exists a $N_\ep\gg 1$ such that
\bes\left\{\begin{aligned}
	&0<{\mathsf H}(x,t)-\ep\varphi_1(x,t)\le H(x,t+nT)\le {\mathsf H}(x,t)+\ep\varphi_1(x,t),&&(x,t)\in Q_T,\; n\geq N_\ep,\\[1mm]
	&0<{\mathsf V}(x,t)-\ep\varphi_2(x,t)\le V(x,t+nT)\le {\mathsf V}(x,t)+\ep\varphi_2(x,t),&&(x,t)\in Q_T,\; n\geq N_\ep.
\end{aligned}\rr.\label{equ:main:step1}\ees

This proof is motivated by \cite{Wang24}.
The condition $\mathcal{R}_0>1$ is equivalent to $\lm({\mathsf H}, {\mathsf V})<0$. So, there exists a $0<\ep_0\ll 1$ such that $\lm({\mathsf H}, {\mathsf V}; \ep)<0$ when $|\ep|\le\ep_0$. Let $(H_u, H_i, V_u, V_i)$ be the unique solution of \qq{1.1}. Then $H=H_u+H_i$ and $V=V_u+V_i$ satisfy, respectively, \qq{3.1} and \qq{3.2} with $T_{\rm max}=+\yy$. Moreover,
 \bes
 \lim_{n\to+\yy}H(x,t+nT)={\mathsf H}(x,t),\;\;\;\lim_{n\to+\yy}V(x,t+nT)={\mathsf V}(x,t) \;\;\;{\rm in}\;\;C^{2,1}(\overline Q_T).
 \lbl{3.6}\ees

For the case $\alpha_1=1$, we have $\min_{\overline Q_T}\varphi_1>0$. Thus,
 \bess
 |H(x,t+nT)-{\mathsf H}(x,t)|<\ep\varphi_1(x,t),\;\;\;(x,t)\in Q_T\eess
when $n$ is large, which yields \eqref{equ:main:step1}.

For the case $\alpha_1=0$, we have $\varphi_1>0$ in $Q_T$ and
 \[ \varphi_1(x,t)=0,\;\;\; \frac{\partial\varphi_1}{\partial\nu}(x,t)<0,\;\;\; H(x,t+nT)-{\mathsf H}(x,t)=0,\;\;\;(x,t)\in\overline S_T. \]
Noticing that $\lim_{n\to+\yy}(H(x,t+nT)-{{\mathsf H}}(x,t)\big)=0$ in $C^{2,1}(\overline Q_T)$, and $\frac{\partial\varphi_1}{\partial\nu}$ is continuous and negative on $\overline S_T$, we can find $N_1\gg 1$ such that
  \[\frac{\partial(H(x,t+nT)-{\mathsf H}(x,t)\big)}{\partial\nu}
  -\ep\frac{\partial\varphi_1}{\partial\nu}(x,t)\ge-\frac\ep 2\min_{\overline S_T}\frac{\partial\varphi_1}{\partial\nu}(x,t)>0,\;\;\;(x,t)\in\overline S_T\]
for all $n\ge N_1$. This together with $(H(x,t+nT)-{\mathsf H}(x,t)-\ep\varphi_1(x,t)\big)\big|_{\overline S_T}=0$ asserts that there exists a sub-domain $\Omega_0\Subset\Omega$ such that
$H(x,t+nT)-{\mathsf H}(x,t)-\ep\varphi_1(x,t)\le 0$ in $(\Omega\setminus\Omega_0)\times[0,T]$ for all $n\ge N_1$. Since $\ep\varphi_1(x,t)>0$ in $\overline\Omega_0\times[0,T]$ and $H(x,t+nT)-{\mathsf H}(x,t)\to 0$ in $C(\overline\Omega_0\times[0,T])$ as $n\to+\yy$, there exists $N_\ep>N_1$ such that $H(x,t+nT)-{\mathsf H}(x,t)-\ep\varphi_1(x,t)\le 0$ in $\overline\Omega_0\times[0,T]$ for all $n\ge N_\ep$. It concludes that, for all $n\ge N_\ep$,
 \bess H(x,t+nT)\le{\mathsf H}(x,t)+\ep\varphi_1(x,t),\;\;\;(x,t)\in Q_T.\vspace{-2mm}\eess

The other inequalities in \qq{equ:main:step1} can be proved by the same way.

{\it Step 2}. Prove the following limiting behavior:
 \bes
\limsup_{n\to+\yy}\big(H_i(x,t+nT), V_i(x,t+nT)\big)\leq \big({\mathsf H}_i(x,t), {\mathsf V}_i(x,t)\big) \;\;\;{\rm uniformly\; in}\; \;\overline Q_T.
 \lbl{equ:main:step2}\ees
Since ${\mathsf H}, {\mathsf V}, \varphi_2, \varphi_2$ and all coefficient functions  are $T$-periodic in time $t$. Making use of $H_u=H-H_i$, $V_u=V-V_i$ and \qq{equ:main:step1}, we see that $(H_i, V_i)$ satisfies
 \bess\left\{\begin{aligned}
&\partial_t H_i\le\nabla\cdot d_1(x,t)\nabla H_i+ \ell_1(x,t)\big({\mathsf H}(x,t)+\ep\varphi_1(x,t)-H_i\big)^+V_i\nm\\
 &\hspace{16mm}-\big[b_1(x,t)+c_1(x,t)\big({\mathsf H}(x,t)-\ep\varphi_1(x,t)\big)
 \big]H_i,&&(x,t)\in\oo\times(N_\ep T,\yy),\\
&\partial_t V_i\le\nabla\cdot d_2(x,t)\nabla V_i+ \ell_2(x,t)\big({\mathsf V}(x,t)+\ep\varphi_2(x,t)-V_i\big)^+H_i\nm\\
 &\hspace{16mm}-\big[b_2(x,t)+c_2(x,t)\big({\mathsf V}(x,t)-\ep\varphi_2(x,t)\big)
 \big]V_i,&&(x,t)\in\oo\times(N_\ep T,\yy),\\[1mm]
&\alpha_1\dd\frac{\partial H_i}{\partial\nu}+\beta_1(x,t)H_i=\alpha_2\dd\frac{\partial V_i}{\partial\nu}+\beta_2(x,t)V_i=0,&&(x,t)\in\partial\oo\times(N_\ep T,\yy).
 \end{aligned}\rr.\eess
Let $(\widehat{{\mathsf H}}_i^\ep, \widehat{{\mathsf V}}_i^\ep)$ be the unique positive $T$-periodic solution of \qq{2.7}, and $(\phi^\ep_1,\phi^\ep_2)$ be the positive eigenfunction corresponding to $\lm({\mathsf H}, {\mathsf V}; \ep)$. We can take constants $k\gg 1$ and $0<\delta\ll 1$ such that
 \bess
 k\big(\widehat{{\mathsf H}}_i^\ep(x,0), \dd\widehat{{\mathsf V}}_i^\ep(x,0)\big)\ge\big(H_i(x, N_\ep T), V_i(x, N_\ep T)\big)\ge\delta\big(\phi^\ep_1(x,0),\dd\phi^\ep_2(x,0)\big),\;\;\; x\in\oo.
 \eess
Moreover, it is easy to verify that $k(\widehat{{\mathsf H}}_i^\ep, \widehat{{\mathsf V}}_i^\ep)$ and $\delta(\phi^\ep_1,\phi^\ep_2)$ are the ordered upper and lower periodic solutions of \qq{2.7} provided $k\gg 1$ and $0<\delta\ll 1$. Let $(H_i^{\ep}, V_i^{\ep})$ be the unique positive solution of
 \bes\left\{\begin{aligned}
&\partial_t H_i^{\ep}=\nabla\cdot d_1(x,t)\nabla H_i^{\ep}
+\ell_1(x,t)\big({\mathsf H}(x,t)+\ep\varphi_1(x,t)-H_i^{\ep}\big)^+V_i^{\ep}\\
 &\hspace{16mm}-\big[b_1(x,t)+c_1(x,t)\big({\mathsf H}(x,t)-\ep\varphi_1(x,t)\big)
 \big]H_i^{\ep}, &&x\in\oo,\; t>0,\\
&\partial_t V_i^{\ep}=\nabla\cdot d_2(x,t)\nabla V_i^{\ep}
+\ell_2(x,t)\big({\mathsf V}(x,t)+\ep\varphi_2(x,t)-V_i^{\ep}\big)^+H_i^{\ep}\\
 &\hspace{16mm}-\big[b_2(x,t)+c_2(x,t)\big({\mathsf V}(x,t)-
 \ep\varphi_2(x,t)\big)\big]V_i^{\ep},&&x\in\oo,\; t>0,\\[1mm]
&\alpha_1\dd\frac{\partial H_i^{\ep}}{\partial\nu}+\beta_1(x,t)H_i^{\ep}=\alpha_2\dd\frac{\partial V_i^{\ep}}{\partial\nu}+\beta_2(x,t)V_i^{\ep}=0,&&x\in\partial\oo,\; t>0,\\
&(H_i^{\ep}(x,0), V_i^{\ep}(x,0)\big)=k\big(\widehat{{\mathsf H}}_i^\ep(x,0), \widehat{{\mathsf V}}_i^\ep(x,0)\big),&&x\in\Omega.
 \end{aligned}\rr.\label{3.8}\ees
By the comparison principle, $(H_i^{\ep}, V_i^{\ep})\ge\delta(\phi^\ep_1,\phi^\ep_2)$ and
 \bes
\big(H_i(x,t+N_\ep T),V_i(x,t+N_\ep T)\big)\leq\big(H_i^{\ep}(x,t), V_i^{\ep}(x,t)\big)\leq\big(k\widehat{{\mathsf H}}_i^\ep(x,t),k\widehat{{\mathsf V}}_i^\ep(x,t)\big)
 \lbl{3.9}\ees
for $x\in\oo$ and $t>0$. Define\vspace{-3mm}
 \[H_{i,n}^{\ep}(x,t)=H_i^{\ep}(x,t+nT),\;\;\;V_{i,n}^{\ep}(x,t)=V_i^{\ep}(x,t+nT).\]
Then
 \bes
 H_{i,n}^{\ep}(x,t)\ge\delta\phi^\ep_1(x,t+nT)=\delta\phi^\ep_1(x,t),\;\;\;
  V_{i,n}^{\ep}(x,t)\ge\delta\phi^\ep_2(x,t+nT)=\delta\phi^\ep_2(x,t)
  \lbl{3.10}\ees
for $(x,t)\in Q_T$. On the other hand, since $d_i, \ell_i,  b_i, c_i, \varphi_i, \beta_i$ and ${\mathsf H}, {\mathsf V}$ are time periodic functions with periodic $T$, we have
 \bes\left\{\begin{aligned}
&\partial_t H_{i,n}^{\ep}=\nabla\cdot d_1(x,t)\nabla H_{i,n}^{\ep}+\ell_1(x,t)\big({\mathsf H}(x,t)+\ep\varphi_1(x,t)-H_{i,n}^{\ep}\big)^+V_{i,n}^{\ep}\\
&\hspace{16mm}-\big[b_1(x,t)+c_1(x,t)\big({\mathsf H}(x,t)
 -\ep\varphi_1(x,t)\big)\big]H_{i,n}^{\ep},&&(x,t)\in Q_T,\\
&\partial_t V_{i,n}^{\ep}=\nabla\cdot d_2(x,t)\nabla V_{i,n}^{\ep}+\ell_2(x,t)\big({\mathsf V}(x,t)+\ep\varphi_2(x,t)-V_{i,n}^{\ep}\big)^+H_{i,n}^{\ep}\\
&\hspace{17mm}-\kk[b_2(x,t)+c_2(x,t)\big({\mathsf V}(x,t)-\ep\varphi_2(x,t)
 \big)\rr]V_{i,n}^{\ep},&&(x,t)\in Q_T,\\[1mm]
 &\alpha_1\dd\frac{\partial H_{i,n}^{\ep}}{\partial\nu}+\beta_1(x,t)H_{i,n}^{\ep}
 =\alpha_2\dd\frac{\partial V_{i,n}^{\ep}}{\partial\nu}+\beta_2(x,t)V_{i,n}^{\ep}=0,
 &&(x,t)\in S_T,\\
&H_{i,n}^{\ep}(x,0)=H_{i,n-1}^{\ep}(x,T), \ \ V_{i,n}^{\ep}(x,0)=V_{i,n-1}^{\ep}(x,T),&&x\in\Omega.
 \end{aligned}\rr.\lbl{3.11}\ees
Note that
\bess
&H_{i,1}^{\ep}(x,0)=H_i^{\ep}(x,T)\leq k\widehat{{\mathsf H}}_i^\ep(x,T)
=k\widehat{{\mathsf H}}_i^\ep(x,0)=H_i^{\ep}(x,0),\;\;\;x\in\Omega,\\
&V_{i,1}^{\ep}(x,0)=V_i^{\ep}(x,T)\leq k\widehat{{\mathsf V}}_i^\ep(x,T)
=k\widehat{{\mathsf V}}_i^\ep(x,0)=V_i^{\ep}(x,0),\;\;\;x\in\Omega.
\eess
By the comparison principle,
 \[\big(H_{i,1}^{\ep}(x,t), V_{i,1}^{\ep}(x,t)\big)\leq\big(H_i^{\ep}(x,t), V_i^{\ep}(x,t)\big),\;\;\;(x,t)\in Q_T,\]
and then
  \[H_{i,2}^{\ep}(x,0)=H_{i,1}^{\ep}(x,T)\leq H_i^{\ep}(x,T)=H_{i,1}^{\ep}(x,0),\;\;\; V_{i,2}^{\ep}(x,0)=V_{i,1}^{\ep}(x,T)\leq V_i^{\ep}(x,T)=V_{i,1}^{\ep}(x,0)\]
in $\Omega$. It is derived that
 \[\big(H_{i,2}^{\ep}(x,t), V_{i,2}^{\ep}(x,t)\big)\leq\big(H_{i,1}^{\ep}(x,t),
  V_{i,1}^{\ep}(x,t)\big),\;\;\;(x,t)\in Q_T\]
by the comparison principle. Utilizing the inductive method, we can show that $H_{i,n}^{\ep}$ and $V_{i,n}^{\ep}$ are monotonically decreasing in $n$.  This combined with \qq{3.10} indicates that there exists a function pair $({\mathring{\mathsf H}_i^{\ep}}, {\mathring{\mathsf V}_i^{\ep}})$ satisfying
 \[({\mathring{\mathsf H}_i^{\ep}}, {\mathring{\mathsf V}_i^{\ep}})\ge\delta
 (\phi^\ep_1,\dd\phi^\ep_2)\;\;\;\mbox{in}\;\;\in Q_T\]
such that $\lim_{n\to+\yy}(H_{i,n}^{\ep}, V_{i,n}^{\ep})=({\mathring{\mathsf H}_i^{\ep}}, {\mathring{\mathsf V}_i^{\ep}})$ pointwisely in $\overline Q_T$. Clearly, ${\mathring{\mathsf H}_i^{\ep}}(\cdot,0)={\mathring{\mathsf H}_i^{\ep}}(\cdot, T)$ and ${\mathring{\mathsf V}_i^{\ep}}(\cdot,0)={\mathring{\mathsf V}_i^{\ep}}(\cdot, T)$. By use of the regularity theory and compactness argument, it can be proved that $\lim_{n\to+\yy}(H_{i,n}^{\ep}, V_{i,n}^{\ep})=({\mathring{\mathsf H}_i^{\ep}}, {\mathring{\mathsf V}_i^{\ep}})$ in $[C^{2,1}(\overline Q_T)]^2$, and $({\mathring{\mathsf H}_i^{\ep}}, {\mathring{\mathsf V}_i^{\ep}})$ is a positive solution of \qq{2.7}. Thus,
 \[\big({\mathring{\mathsf H}_i^{\ep}}(x,t), {\mathring{\mathsf V}_i^{\ep}}(x,t)\big)=\big(\widehat{{\mathsf H}}_i^\ep(x,t), \widehat{{\mathsf V}}_i^\ep(x,t)\big)\]
by the uniqueness of positive solution of \qq{2.7}. Consequently,
 \[\lim_{n\to+\yy}H_i^{\ep}(x,t+nT)=\widehat{{\mathsf H}}_i^\ep(x,t),\;\;\;
\lim_{n\to+\yy}V_i^{\ep}(x,t+nT)=\widehat{{\mathsf V}}_i^\ep(x,t) \;\;\; {\rm in}\;\; C^{2,1}(\overline Q_T).\]
Note that $\lim_{\ep\to0}\big({\widehat{{\mathsf H}}_i^\ep}(x,t), {\widehat{{\mathsf V}}_i^\ep}(x,t)\big)=({{\mathsf H}_i}(x,t), {{\mathsf V}_i}(x,t)\big)$ in $[C^{2,1}(\overline Q_T)]^2$. Combining with  \eqref{3.9}, we can obtain \eqref{equ:main:step2}.

{\it Step 3}. Prove that for $\tau >0$ small enough, there exists $N_\tau$ such that the following two estimates hold:
\bes
H_i(x,t+nT)&<&{\mathsf H}(x,t)-\tau\varphi_1(x,t), \;\;(x,t)\in\overline Q_T,\label{equ:main:step3.1}\\[1mm]
V_i(x,t+nT)&<&{\mathsf V}(x,t)-\tau\varphi_2(x,t), \;\;(x,t)\in\overline Q_T
\label{equ:main:step3.2} \ees
for all $n\geq N_\tau$.

Let $\widetilde{\mathsf H}={\mathsf H}-{\mathsf H}_i$ and $\widetilde{\mathsf V}={\mathsf V}-{\mathsf V}_i$. Then $\widetilde{\mathsf H}>0$ and $\widetilde{\mathsf V}>0$ in $Q_T$ (Theorem \ref{th2.1}). The direct calculation yields that $\widetilde{\mathsf H}$ and $\widetilde{\mathsf V}$ satisfy
 \bess\kk\{\begin{aligned}
 &\partial_t \widetilde{\mathsf H}-\nabla\cdot d_1(x,t)\nabla\widetilde{\mathsf H}+\big[b_1(x,t)+c_1(x,t){\mathsf H}(x,t)
 +\ell_1(x,t){\mathsf V}_i(x,t)\big]\widetilde{\mathsf H}\\
 &\hspace{20mm}=a_1(x,t){\mathsf H}(x,t),&&(x,t)\in Q_T,\\
 &\alpha_1\dd\frac{\partial\widetilde{\mathsf H}}{\partial\nu}+\beta_1(x,t)\widetilde{\mathsf H}=0,&&(x,t)\in S_T,\\
 &\widetilde{\mathsf H}(x,0)=\widetilde{\mathsf H}(x,T),&&x\in\oo.
 \end{aligned}\rr.\eess
and
 \bess\kk\{\begin{aligned}
&\partial_t \widetilde{\mathsf V}-\nabla\cdot d_2(x,t)\nabla\widetilde{\mathsf V}+\big[b_2(x,t)+c_2(x,t){\mathsf V}(x,t)
 +\ell_2(x,t){\mathsf H}_i(x,t)\big]\widetilde{\mathsf V}\\
 &\hspace{20mm}=a_2(x,t){\mathsf V}(x,t),&&(x,t)\in Q_T,\\
 &\alpha_2\dd\frac{\partial\widetilde{\mathsf V}}{\partial\nu}+\beta_2(x,t)\widetilde{\mathsf V}=0,
 &&(x,t)\in S_T,\\
 &\widetilde{\mathsf V}(x,0)=\widetilde{\mathsf V}(x,T),&&x\in\oo,
 \end{aligned}\rr.\eess
respectively. Recalling that
 \bess\begin{aligned}
 &b_1(x,t)+c_1(x,t){\mathsf H}(x,t)
 +\ell_1(x,t){\mathsf V}_i(x,t)\ge 0,\,\not\equiv 0,&&(x,t)\in Q_T,\\
 &b_2(x,t)+c_2(x,t){\mathsf V}(x,t)+\ell_2(x,t){\mathsf H}_i(x,t)\ge 0,\,\not\equiv 0,&&(x,t)\in Q_T,\end{aligned}\eess
and
  \bess
  a_1(x,t){\mathsf H}(x,t)>0,\;\;a_2(x,t){\mathsf V}(x,t)>0,\;\;\;(x,t)\in Q_T.
  \eess
We can show that there exists $0<\tau_0<\ep_0$ such that $\widetilde{\mathsf H}>2\tau\varphi_1$, $\widetilde{\mathsf V}>2\tau\varphi_2$, i.e.,
   \bes
{\mathsf H}_i<{\mathsf H}-2\tau\varphi_1,\; \;\; {\mathsf V}_i<{\mathsf V}-2\tau\varphi_2\;\;\;\mbox{in}\;\; Q_T
   \lbl{3.14}\ees
for all $0<\tau<\tau_0$. In fact, when $a_1=1$, then $\widetilde{\mathsf H}>0$ in $\overline Q_T$, and so $\widetilde{\mathsf H}>2\tau\varphi_1$ in $\overline Q_T$ if $\tau>0$ is suitable small. When $a_1=0$, then $\widetilde{\mathsf H}>0$ in $Q_T$, and $\widetilde{\mathsf H}=0$ and $\frac{\partial\widetilde{\mathsf H}}{\partial\nu}<0$ on $S_T$ by the maximum principle (\cite[Theorem 7.1]{Wpara}) and Hopf boundary lemma. By adapting analogous methods as in \cite[Lemma 2.1]{WPbook24} we can show that $\widetilde{\mathsf H}>2\tau\varphi_1$ in $Q_T$ if $\tau>0$ is suitable small. Similarly, $\widetilde{\mathsf V}>2\tau\varphi_2$ holds in $Q_T$ if $\tau>0$ is small.

For such $0<\tau<\tau_0$, since $\lim_{\ep\to0}{\mathring{\mathsf H}_i^{\ep}}
=\lim_{\ep\to0}\widehat{{\mathsf H}}_i^\ep={\mathsf H}_i$ in $C^{2,1}(\overline Q_T)$, by the analogous discussions of \qq{equ:main:step1}, we can find $0<\bar\ep<\ep_0$ such that
 \[{\mathring{\mathsf H}_i^{\bar \ep}}<{\mathsf H}_i+(\tau/2)\vp_1\;\;\;\mbox{in}\;\; Q_T.\]
Recalling that $\lim_{n\to+\yy}H_i^{\bar \ep}(x,t+nT)={\mathring{\mathsf H}_i^{\bar \ep}}(x,t)$ in $C^{2,1}(\overline Q_T)$, we can find $N_{\bar\ep, \tau}\gg 1$ such that
 \bes
 H_i^{\bar \ep}(x,t+nT)<{\mathring{\mathsf H}_i^{\bar \ep}}(x,t)+(\tau/2)\vp_1(x,t)<{\mathsf H}_i(x,t)+\tau \vp_1(x,t),\;\;\;(x,t)\in\overline Q_T,\;\;\forall\; n\geq N_{\bar\ep,\tau}.
 \quad\lbl{3.15}\ees
By \qq{3.9}, $H_i(x,t+(n+N_{\bar\ep, \tau})T)\leq H_i^{\bar \ep}(x,t+nT)$ holds in $Q_T$. It follows from \qq{3.14} and \qq{3.15} that
$$
 H_i(x,t+(n+N_{\bar\ep,\tau})T)<{\mathsf H}_i(x,t)+\tau \vp_1(x,t)<{\mathsf H}(x,t)-\tau\varphi_1(x,t), \;\;(x,t)\in\overline Q_T
$$
for all $n\geq N_{\bar\ep, \tau}$. The desired  \eqref{equ:main:step3.1} can be obtained by chosen $N_\tau= N_{\bar\ep, \tau}$.

 Similarly, we can prove \eqref{equ:main:step3.2}.

{\it Step 4}. Completes the proof.

For such $\tau$ determined in Step 3. Thanks to \qq{equ:main:step1}, there exists $\hat N_\tau\gg 1$ such that
 \bess\left\{\begin{aligned}
&0<{\mathsf H}(x,t)-\tau\varphi_1(x,t)\le H(x,t+nT)\le {\mathsf H}(x,t)+\tau\varphi_1(x,t),&&(x,t)\in Q_T,\; n\geq \hat N_\tau,\\
&0<{\mathsf V}(x,t)-\tau\varphi_2(x,t)\le V(x,t+nT)\le {\mathsf V}(x,t)+\tau\varphi_2(x,t),&&(x,t)\in Q_T,\; n\geq \hat N_\tau.
  \end{aligned}\rr.\eess
Choose $N_\tau$ such that \eqref{equ:main:step3.1} and \eqref{equ:main:step3.2} hold. Set $N_*=N_\tau+\hat N_\tau$. Noticing that $H_u=H-H_i$,  $V_u=V-V_i$, and ${\mathsf H}, {\mathsf V}, \varphi_1, \varphi_2$ are $T$-periodic in time $t$. Take advantage of \qq{equ:main:step3.1} and \qq{equ:main:step3.2}, it follows that $(H_i, V_i)$ satisfies
 \bess\kk\{\begin{aligned}
&\partial_t H_i\ge\nabla\cdot d_1(x,t)\nabla H_i+ \ell_1(x,t)\big({\mathsf H}(x,t)-\tau\varphi_1(x,t)-H_i\big)^+V_i\\
 &\hspace{16mm}-\big[b_1(x,t)+c_1(x,t)\big({\mathsf H}(x,t)+\tau\varphi_1(x,t)\big)\big]H_i\; ,&&(x,t)\in\oo\times(N_*T,\yy),\\
&\partial_t V_i\ge\nabla\cdot d_2(x,t)\nabla V_i+ \ell_2(x,t)\big({\mathsf V}(x,t)-\tau\varphi_2(x,t)-V_i\big)^+H_i\\
&\hspace{16mm}-\big[b_2(x,t)+c_2(x,t)\big({\mathsf V}(x,t)+\tau\varphi_2(x,t)\big)\big]V_i\; ,&&(x,t)\in\oo\times(N_*T,\yy),\\[1mm]
&\alpha_1\dd\frac{\partial H_i}{\partial\nu}+\beta_1(x,t)H_i=\alpha_2\dd\frac{\partial V_i}{\partial\nu}+\beta_2(x,t)V_i=0\; &&(x,t)\in\partial\oo\times(N_*T,\yy).
 \end{aligned}\rr.\eess

Let $(\phi^{-\tau}_1,\phi^{-\tau}_2\big)$ be the positive eigenfunction corresponding to $\lm({\mathsf H}, {\mathsf V}; -\tau)$. Similar to Step 1, we can find $0<\delta\ll 1$ such that $\delta(\phi^{-\tau}_1,\phi^{-\tau}_2\big)$ is a lower solution of \qq{2.7} with $\ep=-\tau$, and $\delta(\phi^{-\tau}_1(x,0),\phi^{-\tau}_2(x,0)\big)\le \big(H_i(x, N_*T), V_i(x, N_*T)\big)$ in $\oo$. Let $(H_i^{-\tau}, V_i^{-\tau})$ be the unique positive solution of
 \bess\kk\{\begin{aligned}
&\partial_t H_i^{-\tau}=\nabla\cdot d_1(x,t)\nabla H_i^{-\tau}+\ell_1(x,t)\big({\mathsf H}(x,t)
-\tau\varphi_1(x,t)-H_i^{-\tau}\big)^+V_i^{-\tau}\\
 &\hspace{16mm}-\big[b_1(x,t)+c_1(x,t)\big({\mathsf H}(x,t)
 +\tau\varphi_1(x,t)\big)\big]H_i^{-\tau},&&x\in\oo,\;t>0,\\
&\partial_t V_i^{-\tau}=\nabla\cdot d_2(x,t)\nabla V_i^{-\tau}=\ell_2(x,t)\big({\mathsf V}(x,t)
-\tau\varphi_2(x,t)-V_i^{-\tau}\big)^+H_i^{-\tau}\\
 &\hspace{16mm}-\big[b_2(x,t)+c_2(x,t)\big({\mathsf V}(x,t)
 +\tau\varphi_2(x,t)\big)\big]V_i^{-\tau},&&x\in\oo,\;t>0,\\[1mm]
&\alpha_1\dd\frac{\partial H_i^{-\tau}}{\partial\nu}+\beta_1(x,t)H_i^{-\tau}=\alpha_2\dd\frac{\partial V_i^{-\tau}}{\partial\nu}+\beta_2(x,t)V_i^{-\tau}=0,&&x\in\partial\oo,\;t>0,\\
&(H_i^{-\tau}(x,0), V_i^{-\tau}(x,0)\big)=
\delta(\phi^{-\tau}_1(x,0),\phi^{-\tau}_2(x,0)\big),&&x\in\oo.
\end{aligned}\rr.\eess
By the comparison principle, for $x\in\oo$ and $t>0$,
 \bes
\big(H_i(x,t+N_*T), V_i(x,t+N_*T)\big)\geq (H_i^{-\tau}(x,t), V_i^{-\tau}(x,t)\big)\geq (\delta \phi^{-\tau}_1(x,t),\delta \phi^{-\tau}_2(x,t)\big).
\lbl{3.16}\ees
Set
 \[H_{i,n}^{-\tau}(x,t)=H_i^{-\tau}(x,t+nT),\;\;\;V_{i,n}^{-\tau}(x,t)
 =V_i^{-\tau}(x,t+nT)\]
for $(x,t)\in \overline Q_T$. Then $(H_{i,n}^{-\tau}, V_{i,n}^{-\tau})$ satisfies \qq{3.11} with $\ep=-\tau$. Similar to Step 2, we can prove that $H_{i,n}^{-\tau}$ and $V_{i,n}^{-\tau}$ are monotonically increasing in $n$, and
 \[\lim_{n\to+\yy}H_i^{-\tau}(x,t+nT)={\mathsf H}_i^{-\tau}(x,t), \ \ \
\lim_{n\to+\yy}V_i^{-\tau}(x,t+nT)={\mathsf H}_i^{-\tau}(x,t) \ \ {\rm in}\ C^{2,1}(\overline Q_T),\]
where $({\mathsf H}_i^{-\tau}, {\mathsf V}_i^{-\tau})$ is the unique positive periodic solution of problem \qq{2.7} with $\ep=-\tau$. Take advantage of  \eqref{3.16} and $\lim_{\tau\to 0}({{\mathsf H}_i^{-\tau}}, {{\mathsf V}_i^{-\tau}})=({{\mathsf H}_i}, {{\mathsf V}_i})$, it follows that
\[\liminf_{n\to+\yy}\big(H_i(x,t+nT), V_i(x,t+nT)\big)\geq \big({\mathsf H}_i(x,t), {\mathsf V}_i(x,t)\big) \;\;\;{\rm uniformly\; in}\; \;\overline Q_T.\]
This, together with \eqref{equ:main:step2}, yields that
 \[\lim_{n\to+\yy}\big(H_i(x,t+nT),V_i(x,t+nT)\big)=\big({\mathsf H}_i(x,t),{\mathsf V}_i(x,t)\big) \;\;\;{\rm uniformly\; in}\; \;\overline Q_T.\]

Using the fact that $\lim_{n\to+\yy}(V_u(x,t+nT)+V_i(x,t+nT)\big)={\mathsf V}(x,t)$ in $C^{2,1}(\overline Q_T)$ and the uniform estimate (cf. \cite[Theorems 2.11 and 3.14]{Wpara}), we see that \qq{3.3} holds. The conclusion (1) is proved.

(2)\, Assume that $\mathcal{R}_{0,1}, \mathcal{R}_{0,2}>1$ and $\mathcal{R}_0 \le 1$. Let $\lm({\mathsf H}, {\mathsf V}; \ep)$ be the principal eigenvalue of \qq{2.7}. If $\lm({\mathsf H}, {\mathsf V})>0$, then $\lm({\mathsf H}, {\mathsf V}; \ep)>0$ when $0<\ep\ll 1$. It can be seen from the proof of necessity of Theorem \ref{th2.1} that \qq{2.8} has no positive solution. This implies that \qq{2.7} has no positive solution. In fact, if $(\widehat{{\mathsf H}}_i^\ep, \widehat{{\mathsf V}}_i^\ep)$ is a positive solution of \qq{2.7}, then $(\widehat{{\mathsf H}}_i^\ep, \widehat{{\mathsf V}}_i^\ep)<({\mathsf H}+\ep\varphi_1, {\mathsf V}+\ep\varphi_2)$ since $({\mathsf H}+\ep\varphi_1, {\mathsf V}+\ep\varphi_2)$ is a strict upper solution of \qq{2.7}. Therefore, $(\widehat{{\mathsf H}}_i^\ep, \widehat{{\mathsf V}}_i^\ep)$ is also a positive solution of \qq{2.8}, which is impossible.

Let $(H_i^{\ep},V_i^{\ep})$ be the unique positive solution of \qq{3.8} with initial data $(C, C)$, where $C$ is a suitably large positive constant, for example,
  \bess
 C>\max_{\overline\oo\times[0,+\yy)}\big({\mathsf H}(x,t)+{\mathsf V}(x,t)+H_i(x,t)+V_i(x,t)+\ep\vp_1(x,t)+\ep\vp_2(x,t)\big),\eess
such that $(C, C)$ is an upper solution of \qq{2.7}. Similar to the above,
 \[\lim_{n\to+\yy}\big(H_i^{\ep}(x,t+nT),V_i^{\ep}(x,t+nT)\big)=(0,0)\]
in $[C^{2,1}(\overline Q_T)]^2$ since \qq{2.7} has no positive solution. By the comparison principle we have
 \[\lim_{t\to+\yy}\big(H_i(x,t+nT), V_i(x,t+nT)\big)=(0,0).\]
This, together with
 \bess
 \lim_{n\to+\yy}\big(V_u(x,t+nT)+V_i(x,t+nT)\big)={\mathsf V}(x,t),\\[0.5mm]
 \lim_{n\to+\yy}\big(H_u(x,t+nT)+H_i(x,t+nT)\big)={\mathsf H}(x,t),\eess
yields the limit \qq{3.4}.\vskip 2pt

If $\lm({\mathsf V})=0$, then $\lm({\mathsf V}; \ep)<0$ and \qq{2.7} has a unique positive solution $({\mathsf H}_i^\ep, {\mathsf V}_i^\ep)$ for any $\ep>0$. Obviously, $\lim_{\ep\to 0}({\mathsf H}_i^\ep, {\mathsf V}_i^\ep)=(0, 0)$ in $[C^{2,1}(\overline Q_T)]^2$. Similar to the above, $\lim_{n\to+\yy}\big(H_i(x,t+nT), V_i(x,t+nT))=(0,0)$ and \qq{3.4} holds.

The proof of conclusion (2) is complete.

(3)\; Assume that $\mathcal{R}_{0,1} \le 1$ and $\mathcal{R}_{0,2}>1$. Then $\lim_{n\to+\yy}V(x,t+nT)={\mathsf V}(x,t)$ and $(2.2_1)$ has no positive solution. Therefore,  $\lim_{n\to+\yy}H(x,t+nT)=0$, and then
  \[\lim_{n\to+\yy}H_u(x,t+nT)=\lim_{n\to+\yy}H_i(x,t+nT)\big)=0\;\;\;{\rm uniformly \; in }\;\;\overline Q_T.\]
Consequently,
 \[\lim_{n\to+\yy}V_i(x,t+nT)=0, \;\;\;{\rm and\; then}\;\; \lim_{n\to+\yy}V_u(x,t+nT)={\mathsf V}(x,t)\;\;\;{\rm uniformly \;in}\;\; \overline Q_T.\]

The rest conclusions can be proven in a similar way. The proof is complete.
 \end{proof}

\section{Special cases: autonomous case and constant case}

In this section, we first present the dynamics of \qq{1.1} for the autonomous case, that is, the coefficient functions do not depend on time $t$. Then we employ the explicit expression to demonstrate the dynamics of \qq{1.1} subject to the Neumann type boundary condition for the constant case.
For the autonomous case, the time periodic problem \qq{2.1} associated to \qq{1.1} becomes equilibrium problem
  \bes\kk\{\begin{aligned}
&-\nabla\cdot d_1(x)\nabla{\mathsf H}_u=
a_1(x)({\mathsf H}_u+{\mathsf H}_i)-b_1(x){\mathsf H}_u\\
&\hspace{32mm}-c_1(x)({\mathsf H}_u+{\mathsf H}_i){\mathsf H}_u
-\ell_1(x){\mathsf H}_u{\mathsf V}_i,&&x\in\oo,\\
&-\nabla\cdot d_1(x)\nabla{\mathsf H}_i=\ell_1(x){\mathsf H}_u{\mathsf V}_i
-b_1(x){\mathsf H}_i-c_1(x)({\mathsf H}_u
+{\mathsf H}_i){\mathsf H}_i,&&x\in\oo,\\
&-\nabla\cdot d_2(x)\nabla{\mathsf V}_u=
a_2(x)({\mathsf V}_u+{\mathsf V}_i)-b_2(x){\mathsf V}_u\\
&\hspace{32mm}-c_2(x)({\mathsf V}_u+{\mathsf V}_i){\mathsf V}_u
-\ell_2(x){\mathsf H}_i{\mathsf V}_u,&&x\in\oo,\\
&-\nabla\cdot d_2(x)\nabla{\mathsf V}_i=\ell_2(x){\mathsf H}_i{\mathsf V}_u-b_2(x) {\mathsf V}_i-c_2(x)({\mathsf V}_u+{\mathsf V}_i){\mathsf V}_i,&&x\in\oo,\\
&\alpha_1\dd\frac{\partial {\mathsf H}_u}{\partial\nu}+\beta_1(x){\mathsf H}_u
=\alpha_1\frac{\partial{\mathsf H}_i}{\partial\nu}
+\beta_1(x){\mathsf H}_i=0,&&x\in\partial\oo,\\[1mm]
&\alpha_2\dd\frac{\partial{\mathsf V}_u}{\partial\nu}+\beta_2(x){\mathsf V}_u
=\alpha_2\frac{\partial{\mathsf V}_i}{\partial\nu}
+\beta_2(x){\mathsf V}_i=0,&&x\in\partial\oo.
  \end{aligned}\rr.\label{4.1}\ees
We first consider the problems
\[\left\{\begin{array}{lll}
	-\nabla\cdot d_j(x)\nabla{\mathsf U}=\big(a_j(x)-b_j(x)\big){\mathsf U}
	-c_j(x){\mathsf U}^2\;\;,\;&x\in\oo,\\[1.5mm]
	\alpha_j(x)\dd\frac{\partial{\mathsf U}}{\partial\nu}+\beta_j(x){\mathsf U}=0&{\rm on}\;\; \partial\oo
\end{array}\right.\eqno(4.2_j).\]
with $j=1,2$. Then ${\mathsf H}={\mathsf H}_u+{\mathsf H}_i$ and ${\mathsf V}={\mathsf V}_u+{\mathsf V}_i$ are the positive solutions of $(4.2_1)$ and $(4.2_2)$, respectively.

In the meantime, $(2.3_j)$ and \qq{2.4} become
\[\begin{cases}
	-\nabla\cdot d_j(x)\nabla\varphi+\big(b_j(x)-a_j(x)\big)\varphi=
	\zeta\varphi,\;&x\in\oo,\\[1.5mm]	 \alpha_j(x)\dd\frac{\partial\varphi}{\partial\nu}+\beta_j(x)\varphi=0,&x\in\partial\oo,
\end{cases}\]
and\setcounter{equation}{2}
  \bes\kk\{\begin{aligned}
&-\nabla\cdot d_1(x)\nabla{\mathsf H}_i=\ell_1(x)\big({\mathsf H}(x)
-{\mathsf H}_i){\mathsf V}_i-\big[b_1(x)+c_1(x){\mathsf H}(x)\big]
{\mathsf H}_i,&&x\in\oo,\\
&-\nabla\cdot d_2(x)\nabla{\mathsf V}_i=\ell_2(x)\big({\mathsf V}(x)
-{\mathsf V}_i){\mathsf H}_i-\big[b_2(x)+c_2(x){\mathsf V}(x)\big]
{\mathsf V}_i,&&x\in\oo,\\[.1mm]
&\alpha_1\dd\frac{\partial{\mathsf H}_i}{\partial\nu}+\beta_1(x){\mathsf H}_i
=\alpha_2\dd\frac{\partial{\mathsf V}_i}{\partial\nu}+\beta_2(x){\mathsf V}_i=0,&&x\in\partial\oo,
 \end{aligned}\rr.\label{4.3}\ees
respectively, and the eigenvalue problem \qq{2.5} becomes
  \bes\kk\{\begin{aligned}
&-\nabla\cdot d_1(x)\nabla\phi_1+\big[b_1(x)+c_1(x){\mathsf H}(x)\big]\phi_1
= \ell_1(x){\mathsf H}(x)\phi_2+\lm\phi_1,&&x\in\oo,\\
&-\nabla\cdot d_2(x)\nabla\phi_2+\big[b_2(x)+c_2(x){\mathsf V}(x)\big]\phi_2
=\ell_2(x){\mathsf V}(x)\phi_1+\lm\phi_2,&&x\in\oo,\\[.1mm]
&\alpha_1\dd\frac{\partial\phi_1}{\partial\nu}+\beta_1(x)\phi_1	 =\alpha_2\dd\frac{\partial\phi_2}{\partial\nu}+\beta_2(x)\phi_2=0,&&x\in\partial\oo.
  \end{aligned}\rr.\label{4.4}\ees
The principal eigenvalue of \qq{4.4} is still denoted by $\lm({\mathsf H}, {\mathsf V})$. Moreover, the basic reproduction ratios $\mathcal{R}_{0,j}$, $\mathcal{R}_0$ can be defined by \cite{WZ2012SIADS}. We remark that $\Phi_j(t)$ and $\Phi(t)$ are the semiflow on $Y_j$ and $X$, respectively, as well as, $F_j$ and $F$ are independent of $t$. Therefore, the operators $L_j$ and $\hat{L}_j$ are defined on $Y_j$, and $\mathcal{L}$ and $\hat{\mathcal{L}}$ are defined on $X$. Here we omit the details. The corresponding results are the following theorems.\vspace{-2mm}

\begin{theo}\lbl{th4.2} Assume that $\mathcal{R}_{0,j}>1$, $j=1,2$. Then problem \eqref{4.3} has a positive solution $\big({\mathsf H}_i, {\mathsf V}_i\big)$ if and only if $\mathcal{R}_0>1$. Moreover, the positive solution $\big({\mathsf H}_i, {\mathsf V}_i\big)$ of \eqref{4.3} is unique and satisfies ${\mathsf H}_i<{\mathsf H}$ and ${\mathsf V}_i<{\mathsf V}$ when it exists. Therefore, \eqref{4.1}  has a positive solution $\big({\mathsf H}_u,\, {\mathsf H}_i,\,{\mathsf V}_u,\,{\mathsf V}_i\big)$ if and only if $\mathcal{R}_0>1$, and such solution is unique and takes the form
	\[({\mathsf H}_u, {\mathsf H}_i,{\mathsf V}_u,{\mathsf V}_i)=({\mathsf H}-{\mathsf H}_i,{\mathsf H}_i,{\mathsf V}-{\mathsf V}_i,{\mathsf V}_i)\]
	when it exists.\vspace{-2mm}
\end{theo}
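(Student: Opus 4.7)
The plan is to run the argument of Theorem \ref{th2.1} with time-periodicity collapsed to the stationary elliptic case. Under $\mathcal{R}_{0,j} > 1$, the logistic problems $(4.2_j)$ admit unique positive solutions $\mathsf H$ and $\mathsf V$, and any nonnegative solution of \qq{4.1} must satisfy ${\mathsf H}_u + {\mathsf H}_i = \mathsf H$ and ${\mathsf V}_u + {\mathsf V}_i = \mathsf V$; hence the existence of a positive solution of \qq{4.1} is equivalent to the existence of a positive solution of \qq{4.3} with ${\mathsf H}_i < \mathsf H$, ${\mathsf V}_i < \mathsf V$ in $\Omega$, and the stated form of $({\mathsf H}_u, {\mathsf H}_i, {\mathsf V}_u, {\mathsf V}_i)$ follows automatically.

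For the necessity, I would linearize \qq{4.3} at $(0,0)$ and let $\{Q(t) : t \ge 0\}$ be the semigroup on $C(\overline\Omega, \mathbb{R}^2)$ (resp.\ on $C_0^1(\overline\Omega, \mathbb{R}^2)$ under Dirichlet boundary conditions) generated by the associated cooperative elliptic operator shifted by $\lm({\mathsf H}, {\mathsf V})$, which is strongly positive and compact for $t>0$. Substituting a positive solution of \qq{4.3} into a Duhamel-type identity yields a strictly positive forcing term, which --- exactly as in the corresponding part of the proof of Theorem \ref{th2.1} --- is incompatible with $r(Q(T)) = 1$ unless $\lm({\mathsf H}, {\mathsf V}) < 0$, equivalently $\mathcal{R}_0 > 1$.

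For sufficiency and uniqueness, I would establish an elliptic analogue of Conclusion-$\ep$: for $|\ep| \le \ep_0$ small, the perturbed stationary problems (obtained by substituting ${\mathsf H} \pm \ep\vp_1$ and ${\mathsf V} \pm \ep\vp_2$ in the appropriate places, in both truncated and un-truncated forms) admit a common unique positive solution dominated pointwise by $({\mathsf H} + \ep\vp_1, {\mathsf V} + \ep\vp_2)$. Existence follows from the elliptic upper--lower solutions method: $({\mathsf H} + \ep\vp_1, {\mathsf V} + \ep\vp_2)$ is a strict upper solution by an analogue of \qq{2.9}, while $\delta(\phi_1^\ep, \phi_2^\ep)$ with $0 < \delta \ll 1$ is an ordered lower solution, where $(\phi_1^\ep, \phi_2^\ep)$ is the positive eigenfunction attached to the principal eigenvalue $\lm({\mathsf H}, {\mathsf V}; \ep) < 0$ of the perturbed elliptic eigenvalue problem. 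Uniqueness is obtained by the two sliding arguments used in Theorem \ref{th2.1}: pushing $\bar s = \sup\{s \in (0,1]: s(\widetilde{\mathsf H}_i^\ep, \widetilde{\mathsf V}_i^\ep) \le (\widehat{\mathsf H}_i^\ep, \widehat{\mathsf V}_i^\ep)\}$ up to $1$ and $\ud k = \inf\{k \ge 1: k(\widetilde{\mathsf H}_i^\ep, \widetilde{\mathsf V}_i^\ep) \ge (\widehat{\mathsf H}_i^\ep, \widehat{\mathsf V}_i^\ep)\}$ down to $1$, with contradictions driven by the elliptic strong maximum principle applied to differences satisfying cooperative strict inequalities parallel to \qq{2.10}. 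Setting $\ep = 0$ then delivers the theorem.

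The main technical obstacle is reproducing the analogue of \qq{2.9} --- the strict upper solution estimate --- when $\alpha_j = 0$, since in that regime $\vp_j$ vanishes on $\partial\Omega$ and one cannot argue via a uniform pointwise lower bound on $\vp_j$. As the paper already signals for the time-periodic case, this will be resolved by a Hopf-lemma argument at $\partial\Omega$ (using $\partial_\nu \vp_j < 0$ there) combined with continuity in the interior, reusing the construction that underlies \qq{equ:main:step1}. Everything else is a routine translation of the periodic proof, so once this boundary point is settled the argument closes.
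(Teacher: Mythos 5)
Your proposal is correct and follows essentially the route the paper intends for Theorem \ref{th4.2}: the paper omits the proof precisely because it is the elliptic transcription of the proof of Theorem \ref{th2.1}, and your reduction via $(4.2_j)$, the Duhamel/Amann necessity argument, the Conclusion-$\ep$ upper--lower solution construction, and the two sliding arguments for uniqueness are the same steps in the stationary setting. Your identification of the $\alpha_j=0$ boundary issue and its Hopf-lemma resolution matches how the paper handles the analogous point for \qq{2.9} and \qq{equ:main:step1}.
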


\begin{theo}\lbl{th4.1a} Assume that the coefficient functions do not depend on time $t$ in \qq{1.1}, and let $(H_u, H_i, V_u, V_i)$ be the unique positive solution of \qq{1.1}.\vspace{-2mm}
	\begin{enumerate}[$(1)$]
		\item\,If $\mathcal{R}_{0,1},\mathcal{R}_{0,2}>1$ and $\mathcal{R}_0>1$, then
		\[\lim_{t\to+\yy}\big(H_u(x,t),\, H_i(x,t),\, V_u(x,t),\, V_i(x,t)\big)
		=\big({\mathsf H}(x)-{\mathsf H}_i(x),\, {\mathsf H}_i(x), \,{\mathsf V}(x)-{\mathsf V}_i(x), \, {\mathsf V}_i(x)\big) \]
		in $[C^{2,1}(\overline\oo)]^4$, where ${\mathsf H}(x)$, ${\mathsf V}(x)$ and $\big({\mathsf H}_i(x), {\mathsf V}_i(x)\big)$ are the unique positive solutions of $(4.2_1)$, $(4.2_2)$ and \qq{4.3}, respectively, and ${\mathsf H}_i(x)<{\mathsf H}(x), {\mathsf V}_i(x)<{\mathsf V}(x)$.
		\item\,If $\mathcal{R}_{0,1}, \mathcal{R}_{0,2}>1$ and $\mathcal{R}_0 \le 1$, then
		\[\lim_{t\to+\yy}\big(H_u(x,t),\,H_i(x,t),\, V_u(x,t),\, V_i(x,t)\big)=\big({\mathsf H}(x), 0, {\mathsf V}(x), 0\big)\;\;\;
		{\rm in}\;\; [C^{2,1}(\overline\oo)]^4.\]
		\item\,If $\mathcal{R}_{0,1} \le 1$ and $\mathcal{R}_{0,2}>1$, then
		\[\lim_{t\to+\yy}\big(H_u(x,t),\,H_i(x,t),\, V_u(x,t),\, V_i(x,t)\big)=(0, 0, {\mathsf V}(x), 0)\;\;\;
		{\rm in}\;\; [C^{2,1}(\overline\oo)]^4.\]
		
		If $\mathcal{R}_{0,2} \le 1$ and $\mathcal{R}_{0,1}>1$, then
		\[\lim_{t\to+\yy}(H_u(x,t),\,H_i(x,t),\, V_u(x,t),\, V_i(x,t)\big)=\big({\mathsf H}(x), 0, 0, 0)\;\;\;
		{\rm in}\;\; [C^{2,1}(\overline\oo)]^4.\]
		
		If $\mathcal{R}_{0,1},\mathcal{R}_{0,2}\le 1$, then
		\[\lim_{t\to+\yy}\big(H_u(x,t),\,H_i(x,t),\, V_u(x,t),\, V_i(x,t)\big)=(0, 0, 0, 0)\;\;\;{\rm in}\;\; [C^{2,1}(\overline\oo)]^4. \]
	\end{enumerate}\vspace{-2mm}
\end{theo}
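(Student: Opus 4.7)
The plan is to obtain Theorem~\ref{th4.1a} as a direct specialization of Theorem~\ref{th3.1}. Since the coefficients are time-independent, they are trivially $T$-periodic for every $T>0$, so the entire framework of Sections~2--3 applies and Theorem~\ref{th3.1} can be invoked. Before doing so I would reconcile the two definitions of the basic reproduction numbers: in the autonomous setting the evolution families $\Phi_j(t,s)$ and $\Phi(t,s)$ reduce to $C_0$-semigroups, $F_j$ and $F$ become time-independent multiplication operators, so that $L_j,\hat L_j,\mathcal L,\hat{\mathcal L}$ coincide with the standard next-generation operators of \cite{WZ2012SIADS}; the spectral radii appearing here agree with those used in Section~4, and the correspondence that $\lambda({\mathsf H},{\mathsf V})$ has sign opposite to $\mathcal R_0-1$ remains in force.

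Next I would show that the time-periodic limits $({\mathsf H}(x,t),{\mathsf V}(x,t),{\mathsf H}_i(x,t),{\mathsf V}_i(x,t))$ produced by Section~2 are stationary whenever the coefficients are. Pick any $T>0$. The positive $T$-periodic solution of $(2.2_j)$ is unique, and the time-independent function ${\mathsf H}(x)$ that solves $(4.2_1)$ (which exists because $\mathcal R_{0,1}>1$) is a fortiori a $T$-periodic solution of $(2.2_1)$; uniqueness forces the two to coincide, and likewise for ${\mathsf V}$. The same reasoning, together with the uniqueness part of Theorem~\ref{th4.2}, identifies $({\mathsf H}_i(x,t),{\mathsf V}_i(x,t))$ in case (1) with the stationary solution of \qq{4.3}. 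Hence the right-hand sides of the limits (1)--(3) in Theorem~\ref{th3.1} reduce exactly to those of Theorem~\ref{th4.1a}.

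The final step is to upgrade the subsequential limit $\lim_{n\to\infty}$ along $t+nT$ provided by Theorem~\ref{th3.1} to a genuine $\lim_{t\to\infty}$. Take $T=1$. Theorem~\ref{th3.1} gives convergence of $(x,s)\mapsto(H_u(x,s+n),H_i(x,s+n),V_u(x,s+n),V_i(x,s+n))$ in $[C^{2,1}(\overline\Omega\times[0,1])]^4$, hence in particular uniformly on $\overline\Omega\times[0,1]$. Writing any large $t$ as $t=n+s$ with $n\in\mathbb{N}$ and $s\in[0,1]$, this uniform convergence immediately yields $\lim_{t\to+\infty}(H_u(x,t),H_i(x,t),V_u(x,t),V_i(x,t)) = ({\mathsf H}(x)-{\mathsf H}_i(x),{\mathsf H}_i(x),{\mathsf V}(x)-{\mathsf V}_i(x),{\mathsf V}_i(x))$ uniformly for $x\in\overline\Omega$. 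The promotion to $[C^{2}(\overline\Omega)]^4$ convergence then follows from parabolic interior and boundary Schauder estimates on sliding windows $[t-1,t]$, combined with the a priori bound from Theorem~\ref{th3.a}.

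The main obstacle I anticipate is not analytical but bookkeeping: one must carefully justify that the operator-theoretic definitions of $\mathcal R_{0,j}$ and $\mathcal R_0$ given in Section~2 really collapse to the time-independent next-generation formulations used in Section~4 when coefficients are autonomous, so that the hypotheses of Theorem~\ref{th4.1a} exactly match those invoked through Theorem~\ref{th3.1}. Once this consistency check is in place, all three cases (1)--(3) follow simultaneously, because Theorem~\ref{th3.1} already exhausts every sign combination of $\mathcal R_{0,j}-1$ and $\mathcal R_0-1$, and in each case the corresponding $T$-periodic attractor is forced by the uniqueness results of Sections~2 and~4 to be a time-independent equilibrium.
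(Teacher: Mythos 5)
Your proposal is correct and follows essentially the same route as the paper, which states Theorem \ref{th4.1a} without a separate proof precisely because it is the specialization of Theorems \ref{th2.1} and \ref{th3.1} to time-independent coefficients (the paper's only explicit remark on this point is the consistency of the basic reproduction numbers with \cite{WZ2012SIADS}, which you also address). The three details you supply — the sign agreement of $\mathcal{R}_0-1$ with $-\lm({\mathsf H},{\mathsf V})$ in the autonomous setting, the identification of the unique positive $T$-periodic solutions with the stationary ones via uniqueness, and the upgrade from $\lim_{n\to\infty}$ along $t+nT$ to $\lim_{t\to\infty}$ using the uniformity of the convergence on $\overline Q_T$ — are exactly the bookkeeping the paper leaves implicit, and each is handled correctly.
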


We then further consider the constant case subject to the Neumann type boundary conditions. The time periodic problem \qq{2.1} associated to \qq{1.1} becomes algebraic equations
	\bes\kk\{\begin{aligned}
	&a_1(H_u+H_i)-b_1H_u-c_1(H_u+H_i)H_u-\ell_1H_uV_i=0,\\
&\ell_1H_uV_i-b_1H_i-c_1(H_u+H_i)H_i=0,\\
&a_2(V_u+V_i)-b_2V_u-c_2(V_u+V_i)V_u-\ell_2H_iV_u=0,\\
	&\ell_2H_iV_u-b_2 V_i-c_2(V_u+V_i)V_i=0.
	\vspace{-1mm}\end{aligned}\rr.\label{4.5}\ees
The corresponding
 \bess
\zeta_1=b_1-a_1,\;\;\;
\zeta_2=b_2-a_2,\;\;\;
\mathcal{R}_{0,1}=\frac{a_1}{b_1},\;\;\;
\mathcal{R}_{0,2}=\frac{a_1}{b_1},\;\;\;
{\mathsf H}=\frac{a_1-b_1}{c_1},\;\;\;
{\mathsf V}=\frac{a_2-b_2}{c_2},
	\eess
and \qq{2.4} and \qq{2.5} become (necessarily, $a_1>b_1$ and $a_2>b_2$) algebraic equations
	\bes\kk\{\begin{aligned}
&\ell_1\big[(a_1-b_1)/c_1-{\mathsf H}_i\big]{\mathsf V}_i-a_1{\mathsf H}_i=0,\\
&\ell_2\big[(a_2-b_2)/c_2-{\mathsf V}_i\big]{\mathsf H}_i-a_2{\mathsf V}_i=0,
\vspace{-1mm}\end{aligned}\rr.\label{4.6}\ees
	and the eigenvalue problem of algebraic equations
	\bes\kk\{\begin{aligned}
&a_1\phi_1-(\ell_1(a_1-b_1)/{c_1})\phi_2=\lm\phi_1,\\
& a_2\phi_2-(\ell_2(a_2-b_2)/{c_2})\phi_1=\lm\phi_2,
 \vspace{-1mm}\end{aligned}\rr.\label{4.7}\ees
respectively. The principal eigenvalue of \qq{4.7} is
  \[\lm({\mathsf H}, {\mathsf V})=\frac 12\kk(a_1+a_2-\sqrt{(a_1-a_2)^2
+4\frac{\ell_1\ell_2(a_1-b_1)(a_2-b_2)}{c_1c_2}}\rr).\]
The basic reproduction number is
 \bess
 \mathcal{R}_0=\sqrt{\frac{\ell_1\ell_2(a_1-b_1)(a_2-b_2)}{c_1c_2a_1a_2}},
 \eess
which is solved by the following eigenvalue problem (see e.g., \cite[Theorem 3.2]{WZ2012SIADS} or \cite[Theorem 3.8]{LZZ2019R0}) :
  \bess\kk\{\begin{aligned}
&a_1\phi_1=\frac{1}{\lm({\mathsf H}, {\mathsf V})}
\frac{\ell_1(a_1-b_1)}{c_1}\phi_2,\\[1mm]
&a_2\phi_2=\frac{1}{\lm({\mathsf H}, {\mathsf V})}\frac{\ell_2(a_2-b_2}{c_2}\phi_1.
\end{aligned}\rr.\eess
If $\lm({\mathsf H}, {\mathsf V}) < 0$ (or $\mathcal{R}_0>1$), i.e.,
	\[ a_1a_2<\frac{\ell_1\ell_2(a_1-b_1)(a_2-b_2)}{c_1c_2},\]
	then the unique positive solution $\big({\mathsf H}_i, {\mathsf V}_i)$ of \qq{4.6} takes the form:
	\bess{\mathsf H}_i=\frac{\ell_1\ell_2(a_1-b_1)(a_2-b_2)-a_1a_2c_1c_2}
	{c_1\ell_2[a_1c_2+\ell_1(a_2-b_2)]},\\[2mm]
	{\mathsf V}_i=\frac{\ell_1\ell_2(a_1-b_1)(a_2-b_2)-a_1a_2c_1c_2}
	{c_2\ell_1[a_2c_1+\ell_2(a_1-b_1)]}.\eess
We have the following conclusions.\vspace{-3mm}

\begin{theo}\lbl{th4.3} Assume that the coefficient functions are constants and boundary conditions are Neumann type. Assume that $a_1>b_1$ and $a_2>b_2$. Then problem \eqref{4.5} has a positive solution $\big({\mathsf H}_u, {\mathsf H}_i,{\mathsf V}_u,{\mathsf V}_i)$ if and only if $a_1a_2<\frac{\ell_1\ell_2(a_1-b_1)(a_2-b_2)}{c_1c_2}$. In this case, the positive solution of \eqref{4.5} is uniquely given by
	\bess
	{\mathsf H}_u&=&\frac{a_1c_2[a_2c_1+\ell_2(a_1-b_1)]}
	{c_1\ell_2[a_1c_2+\ell_1(a_2-b_2)]},\\[1mm]
	{\mathsf H}_i&=&\frac{\ell_1\ell_2(a_1-b_1)(a_2-b_2)-a_1a_2c_1c_2}
	{c_1\ell_2[a_1c_2+\ell_1(a_2-b_2)]},\\[1mm]
	{\mathsf V}_u&=&\frac{a_2c_1[a_1c_2+\ell_1(a_2-b_2)]}
	{c_2\ell_1[a_2c_1+\ell_2(a_1-b_1)]},\\[1mm]
	{\mathsf V}_i&=&\frac{\ell_1\ell_2(a_1-b_1)(a_2-b_2)-a_1a_2c_1c_2}
	{c_2\ell_1[a_2c_1+\ell_2(a_1-b_1)]}.
	\eess
\end{theo}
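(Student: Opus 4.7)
The plan is to reduce the four-component algebraic system \eqref{4.5} to the two-component system \eqref{4.6} by eliminating the totals, invoke Theorem~\ref{th4.2} for existence/uniqueness and the threshold criterion, and then carry out a short elementary solve to verify the displayed closed forms.

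First I would add the first and second equations of \eqref{4.5}: the cross term $\ell_1 H_u V_i$ cancels and the sum factors as $(H_u+H_i)\big(a_1-b_1-c_1(H_u+H_i)\big)=0$. Since any positive solution satisfies $H_u+H_i>0$, this forces $H_u+H_i=(a_1-b_1)/c_1={\mathsf H}$, which is positive by the hypothesis $a_1>b_1$. The identical operation on the third and fourth equations yields $V_u+V_i=(a_2-b_2)/c_2={\mathsf V}>0$. Substituting $H_u={\mathsf H}-H_i$ and $V_u={\mathsf V}-V_i$ into the remaining equations collapses \eqref{4.5} to \eqref{4.6}, to which Theorem~\ref{th4.2} applies: a positive solution of \eqref{4.5} exists and is unique if and only if $\mathcal R_0>1$, equivalently $\lambda({\mathsf H},{\mathsf V})<0$.

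Next I would make the threshold explicit. The $2\times 2$ eigenvalue problem \eqref{4.7} has characteristic polynomial $(a_1-\lambda)(a_2-\lambda)=\ell_1\ell_2(a_1-b_1)(a_2-b_2)/(c_1c_2)$, whose smaller root yields the expression for $\lambda({\mathsf H},{\mathsf V})$ already displayed above the theorem statement. Since $a_1+a_2>0$, the inequality $\lambda({\mathsf H},{\mathsf V})<0$ is equivalent, by transposing and squaring, to $a_1a_2<\ell_1\ell_2(a_1-b_1)(a_2-b_2)/(c_1c_2)$, which is the claimed threshold.

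Finally, for the closed forms I would solve \eqref{4.6} by elimination: from its first equation, ${\mathsf V}_i = a_1{\mathsf H}_i/(\ell_1({\mathsf H}-{\mathsf H}_i))$; plug this into the second, clear denominators and cancel the common factor ${\mathsf H}_i>0$, leaving a linear equation whose solution is the displayed ${\mathsf H}_i$, with ${\mathsf V}_i$ following by the symmetric computation. Then ${\mathsf H}_u={\mathsf H}-{\mathsf H}_i$ and ${\mathsf V}_u={\mathsf V}-{\mathsf V}_i$ are placed over the common denominator: in the numerator for ${\mathsf H}_u$ the two copies of $\ell_1\ell_2(a_1-b_1)(a_2-b_2)$ appear with opposite signs and cancel, leaving $a_1c_2[a_2c_1+\ell_2(a_1-b_1)]$, and symmetrically for ${\mathsf V}_u$. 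There is no real obstacle beyond bookkeeping: the substantive content has already been discharged in Theorem~\ref{th4.2}, and the remainder is routine algebra. The only point deserving attention is the reduction step, where the constancy of the coefficients is precisely what forces the sums $H_u+H_i$ and $V_u+V_i$ to equal the scalars ${\mathsf H}$ and ${\mathsf V}$, which is what collapses \eqref{4.5} to \eqref{4.6}.
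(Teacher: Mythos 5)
Your proposal is correct and follows essentially the same route as the paper, which (without writing out a separate proof of Theorem \ref{th4.3}) reduces \eqref{4.5} to the two-component algebraic system \eqref{4.6} via the totals ${\mathsf H}=(a_1-b_1)/c_1$, ${\mathsf V}=(a_2-b_2)/c_2$, identifies the threshold through the explicit principal eigenvalue of \eqref{4.7}, and records the closed-form solution. Your elimination computation reproduces the displayed formulas exactly (note $b_1+c_1{\mathsf H}=a_1$ and $b_2+c_2{\mathsf V}=a_2$, which is why \eqref{4.6} takes the stated form), so nothing is missing.
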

	
We also have the following conclusions.
\begin{theo}\lbl{th4.4} Assume that the coefficient functions are constants and boundary conditions are Neumann type. Let $(H_u(x,t),\, H_i(x,t),\, V_u(x,t),\, V_i(x,t)\big)$ be the unique positive solution of \qq{1.1}.\vspace{-2mm}
 \begin{enumerate}[$(1)$]
\item\, If
\[a_1>b_1, \;\; a_2>b_2,\;\; a_1a_2<\frac{\ell_1\ell_2(a_1-b_1)(a_2-b_2)}{c_1c_2},\]
then
\bess
\lim_{t\to+\yy}\big(H_u(x,t),\, H_i(x,t),\, V_u(x,t),\, V_i(x,t)\big)
=\big({\mathsf H}_u, {\mathsf H}_i, {\mathsf V}_u, {\mathsf V}_i\big)\;\;\;{\rm in}\;\;[C^{2,1}(\overline\oo)]^4.
		\eess
\item\, If
\[a_1>b_1, \;\; a_2>b_2,\;\;a_1a_2\ge\frac{\ell_1\ell_2(a_1-b_1)(a_2-b_2)}{c_1c_2},\]
	then
	\bess
		\lim_{t\to+\yy}\big(H_u(x,t),\,H_i(x,t),\, V_u(x,t),\, V_i(x,t)\big)=\kk((a_1-b_1)/{c_1},\; 0,\; (a_2-b_2)/{c_2},\; 0\rr)\;\;\;{\rm in}\;\;[C^{2,1}(\overline\oo)]^4.
 \eess
\item\, If $a_1\le b_1$ and $a_2>b_2$, then
\bess
\lim_{t\to+\yy}\big(H_u(x,t),\,H_i(x,t),\, V_u(x,t),\, V_i(x,t)\big)=\kk(0,\; 0,\; (a_2-b_2)/{c_2},\; 0\rr)\;\;\;
{\rm in}\;\; [C^{2,1}(\overline\oo)]^4.\eess
		
If $a_2\le b_2$ and $a_1>b_1$, then
\bess
\lim_{n\to+\yy}(H_u(x,t),\,H_i(x,t),\, V_u(x,t),\, V_i(x,t)\big)=\kk((a_1-b_1)/{c_1},\; 0,\; 0,\; 0\rr)\;\;\;
{\rm in}\;\; [C^{2,1}(\overline\oo)]^4.
\eess
		
If $a_1\le b_1$ and $a_2\le b_2$, then
\bess
\lim_{t\to+\yy}\big(H_u(x,t),\,H_i(x,t),\, V_u(x,t),\, V_i(x,t)\big)=(0, 0, 0, 0)\;\;\;{\rm in}\;\; [C^{2,1}(\overline\oo)]^4.
		\eess
	\end{enumerate}
\end{theo}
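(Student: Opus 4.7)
The plan is to view Theorem \ref{th4.4} as a direct specialization of Theorem \ref{th4.1a} to the constant-coefficient, Neumann boundary setting, in which essentially all analytic work has already been done in Sections 2 and 3. First I would translate the three threshold quantities into the explicit algebraic conditions appearing in the statement. Under constant coefficients and Neumann data, the eigenvalue problem $(2.3_j)$ has principal eigenvalue $\zeta_j=b_j-a_j$, so $\mathcal{R}_{0,j}>1$ is equivalent to $a_j>b_j$ and $\mathcal{R}_{0,j}\le 1$ is equivalent to $a_j\le b_j$. When both $a_j>b_j$, the unique positive solutions of $(4.2_1)$ and $(4.2_2)$ are the constants ${\mathsf H}=(a_1-b_1)/c_1$ and ${\mathsf V}=(a_2-b_2)/c_2$, and the formula for $\mathcal{R}_0$ derived from \eqref{4.7} reduces to $\mathcal{R}_0=\sqrt{\ell_1\ell_2(a_1-b_1)(a_2-b_2)/(c_1c_2a_1a_2)}$; hence $\mathcal{R}_0>1$ is equivalent to $a_1a_2<\ell_1\ell_2(a_1-b_1)(a_2-b_2)/(c_1c_2)$, while $\mathcal{R}_0\le 1$ is equivalent to the reverse (non-strict) inequality.

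Second, I would apply Theorem \ref{th4.1a} case by case. Case (1) of Theorem \ref{th4.4} falls under Theorem \ref{th4.1a}(1); the limit $({\mathsf H}-{\mathsf H}_i,{\mathsf H}_i,{\mathsf V}-{\mathsf V}_i,{\mathsf V}_i)$ is identified with the explicit positive equilibrium furnished by Theorem \ref{th4.3}. Case (2) is Theorem \ref{th4.1a}(2), observing that in the constant/Neumann regime the disease-free profile reduces to the constant pair $((a_1-b_1)/c_1,(a_2-b_2)/c_2)$. The three sub-cases in part (3) of Theorem \ref{th4.4} correspond one-to-one with the three sub-cases of Theorem \ref{th4.1a}(3) via the equivalences $\mathcal{R}_{0,j}\le 1\Leftrightarrow a_j\le b_j$, and in each of them the limiting host (resp.\ vector) density is either the Neumann logistic equilibrium or zero.

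The only substantive obstacle is bookkeeping: I need to verify that in the constant/Neumann regime the time-periodic eigenvalue problems $(2.3_j)$ and \eqref{2.5} do collapse to the algebraic eigenvalue problem \eqref{4.7}, and that the abstract basic reproduction number $\mathcal{R}_0=r(\mathcal{L})$ agrees with the algebraic quantity read off from the matrix spectrum, as supplied by \cite[Theorem 3.2]{WZ2012SIADS}. This is essentially a mechanical identification: with constant coefficients the $T$-periodic evolution families become $C_0$-semigroups with constant generators, and the operators $L_j, \mathcal{L}$ reduce to algebraic expressions on the constant eigenspaces, so the spectral radii reduce to ordinary matrix eigenvalues. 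Once these identifications are recorded, Theorem \ref{th4.4} follows immediately from Theorem \ref{th4.1a} without any further PDE analysis.
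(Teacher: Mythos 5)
Your proposal is correct and follows essentially the same route as the paper, which presents Theorem \ref{th4.4} without a separate proof precisely because it is the constant/Neumann specialization of Theorem \ref{th4.1a}, with the thresholds $\mathcal{R}_{0,j}>1\Leftrightarrow a_j>b_j$ and $\mathcal{R}_0>1\Leftrightarrow a_1a_2<\ell_1\ell_2(a_1-b_1)(a_2-b_2)/(c_1c_2)$ read off from the explicit formulas derived just before the statement, and the limits identified via Theorem \ref{th4.3}. Your appeal to \cite[Theorem 3.2]{WZ2012SIADS} for matching the abstract spectral radius with the algebraic basic reproduction number is exactly the identification the paper itself invokes.
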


\section{Numerical simulation}

In this section, we conduct numerical simulations to investigate the effects of spatial and temporal heterogeneity on the basic reproduction number $\mathcal{R}_0$. Specifically, we employ a backward difference for the time variable and a central difference for the spatial variable to perform our simulations. We set $\Omega = [0, 1]$ and $T = 1$. The parameters are chosen as follows:
	\bess\begin{aligned}
 &d_1(x,t)=0.1,\;\;d_2(x,t)=0.2,\;\;l_1(x,t)=3,&\\
 & l_2(x,t)=2,\;\;c_1(x,t)=c_2(x,t)=1,&\\
  &a_1(x,t)=2(1+p_1\cos(\pi x))(1+p_3\cos(2\pi t)), & \\
		&a_2(x,t)=3(1+p_2\cos(\pi x))(1+p_4\cos(2\pi t)),& \\
	&b_1(x,t)=1(1+q_1\cos(\pi x))(1+q_3\cos(2\pi t)),&  \\
		 &b_2(x,t)=2(1+q_2\cos(\pi x))(1+q_4\cos(2\pi t)).&
	\end{aligned}\eess	
	
In the case where $p_1=p_2=p_3=p_4=q_1=q_2=q_3=q_4=0$ (the birth rates and death rates of susceptible birds and mosquitoes are neglected), we obtain that $\mathcal{H}(x,t) \equiv 1$ and $\mathcal{V}(x,t) \equiv 1$, and then $\mathcal{R}_0=1$. In the case where $p_1=p_2=p_3=p_4=0.5$ and $q_1=q_2=q_3=q_4=0$ (the death rates of susceptible birds and mosquitoes are neglected), we can compute $\mathcal{R}_0 \approx1.14>1$ numerically.
\begin{figure}[H]
	\begin{minipage}[t]{0.47\textwidth}
		\centering
		\includegraphics[width=\textwidth]{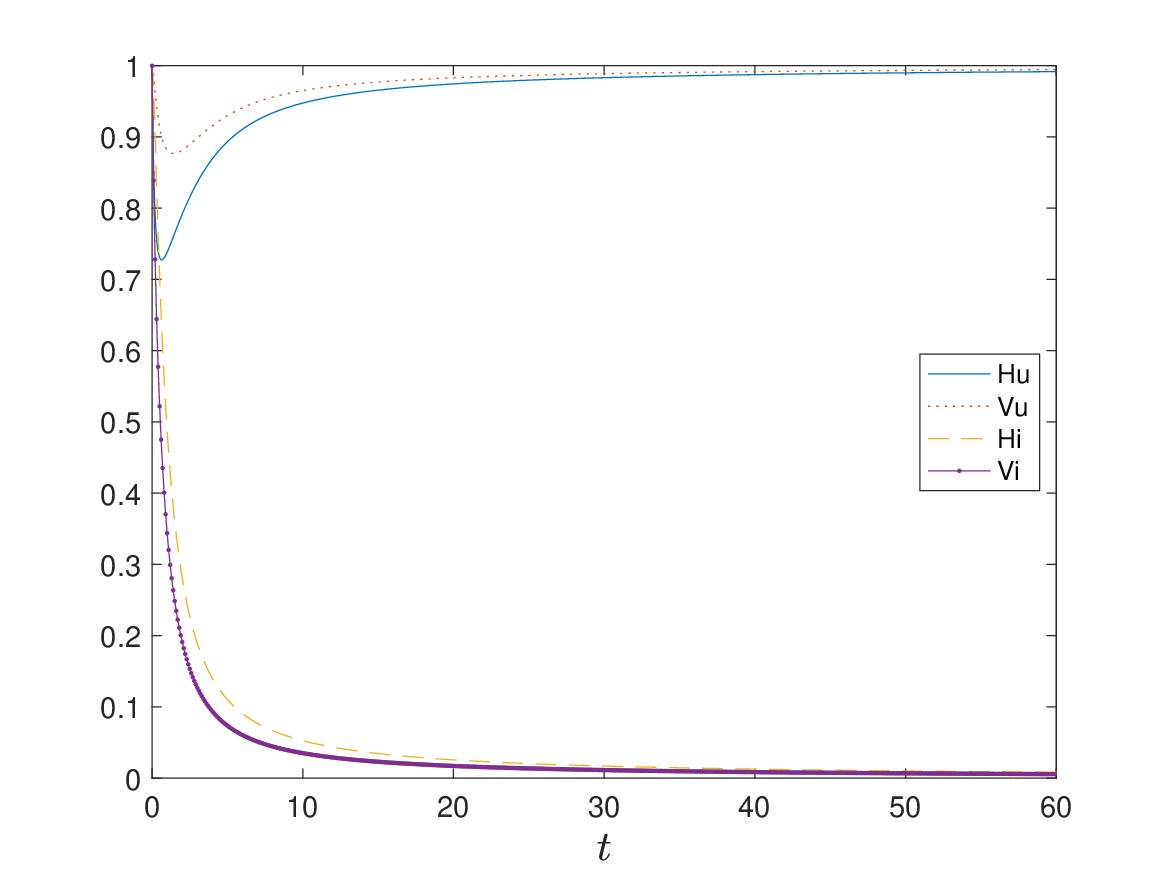}\\[-2mm]
		\caption*{\footnotesize (a)\, $p_1=p_2=p_3=p_4=q_1=q_2=q_3=q_4=0$}
	\end{minipage}
	\begin{minipage}[t]{0.47\textwidth}
		\centering
		\includegraphics[width=\textwidth]{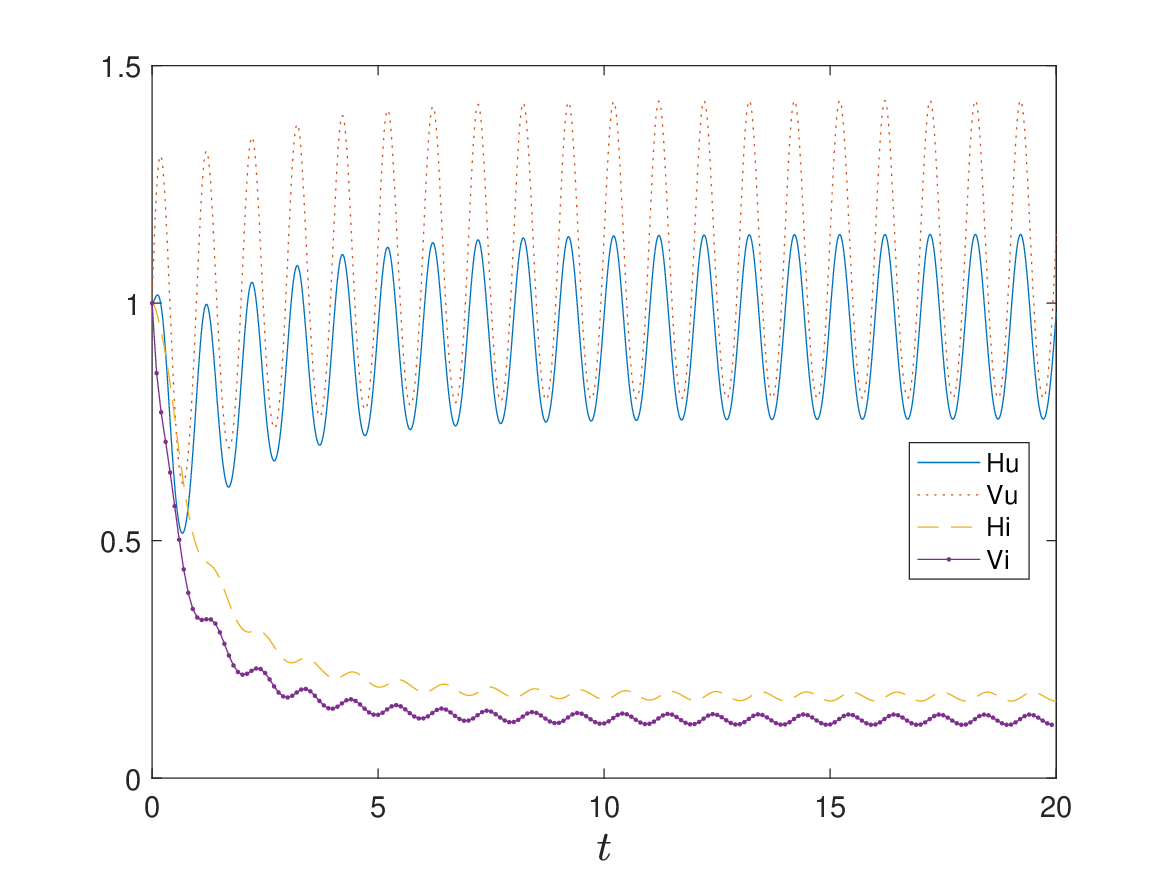}\\[-2mm]
	\caption*{\footnotesize (b)\, $p_1=p_2=p_3=p_4=0.5$, $q_1=q_2=q_3=q_4=0$}
	\end{minipage}\\[-2mm]
	\caption{\label{fig:solution}\small The spatial average of $H_u$, $H_i$, $V_u$ and $V_i$. In (a),  $H_i$ and $V_i$ will disappear eventually with $\mathcal{R}_0=1$. In (b),  $H_i$ and $V_i$ will persist eventually with $\mathcal{R}_0\approx1.14>1$.}\vspace{-3mm}
 \end{figure}
 \begin{figure}[htb]
\centering\begin{minipage}{0.4\textwidth}
\includegraphics[width=\textwidth]{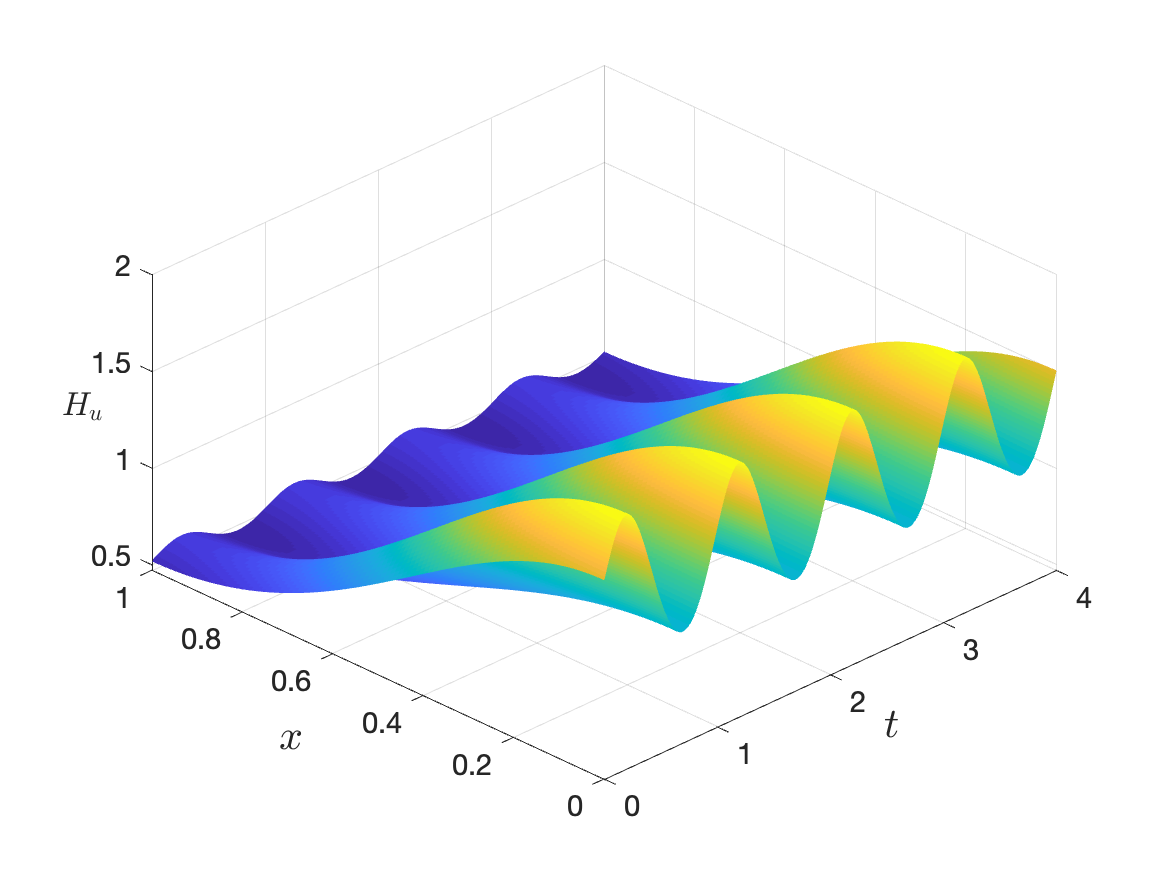}\end{minipage}
\begin{minipage}{0.4\textwidth}
\includegraphics[width=\textwidth]{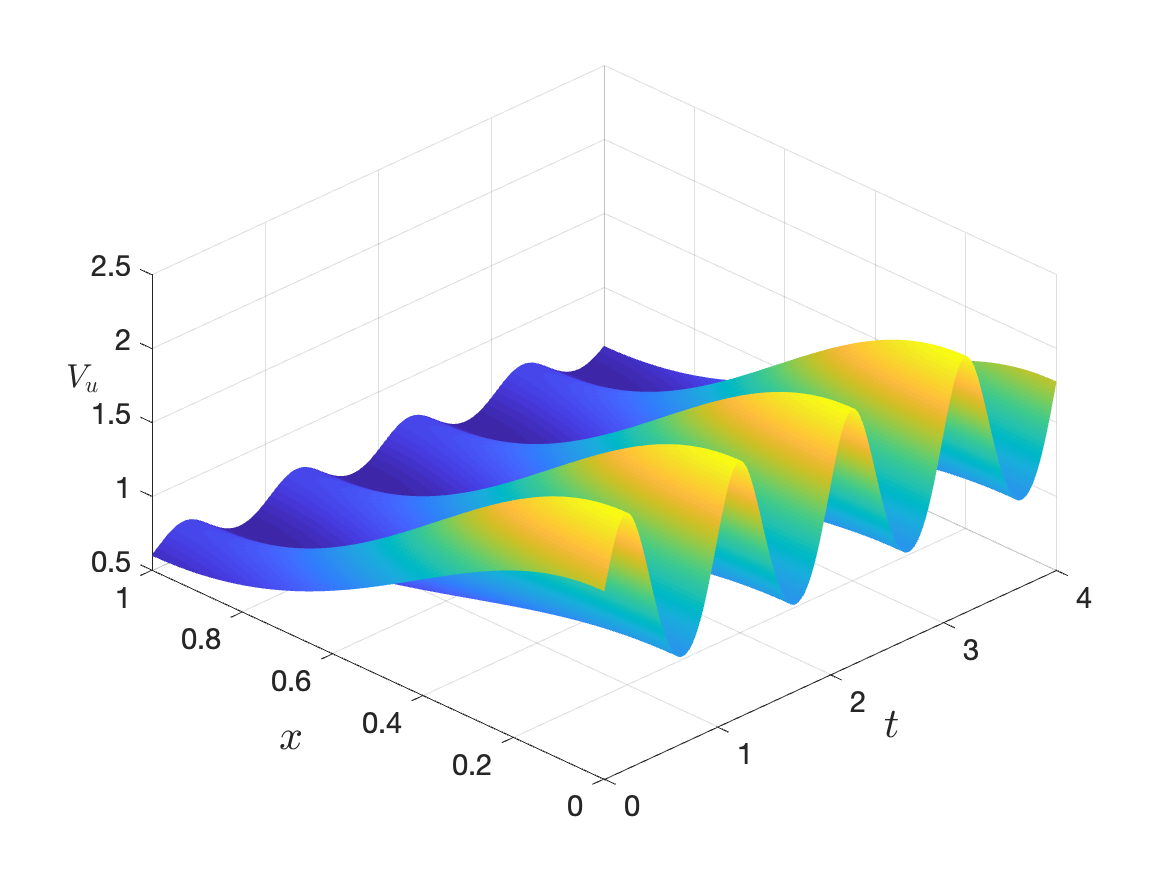}\end{minipage}\\
\centering\begin{minipage}{0.4\textwidth}
\includegraphics[width=\textwidth]{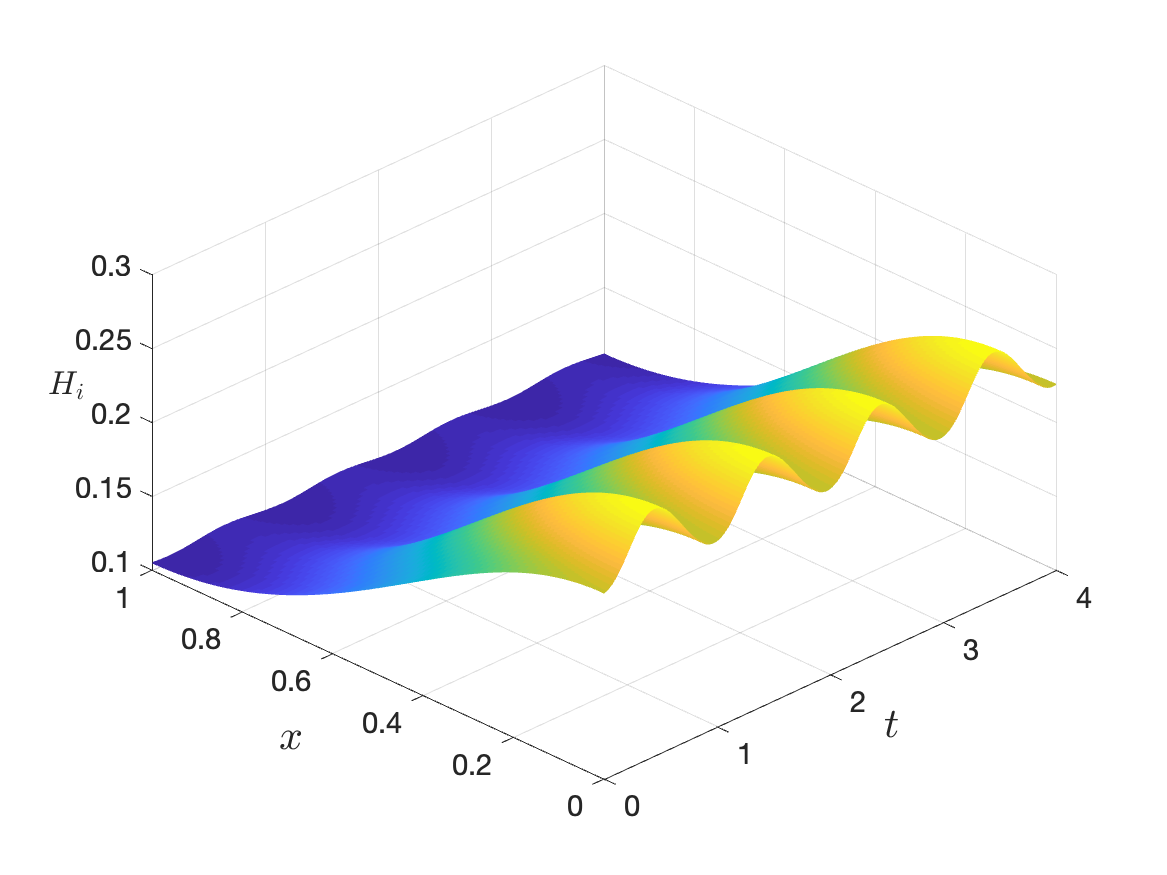}\end{minipage}%
	\begin{minipage}{0.4\textwidth}
\includegraphics[width=\textwidth]{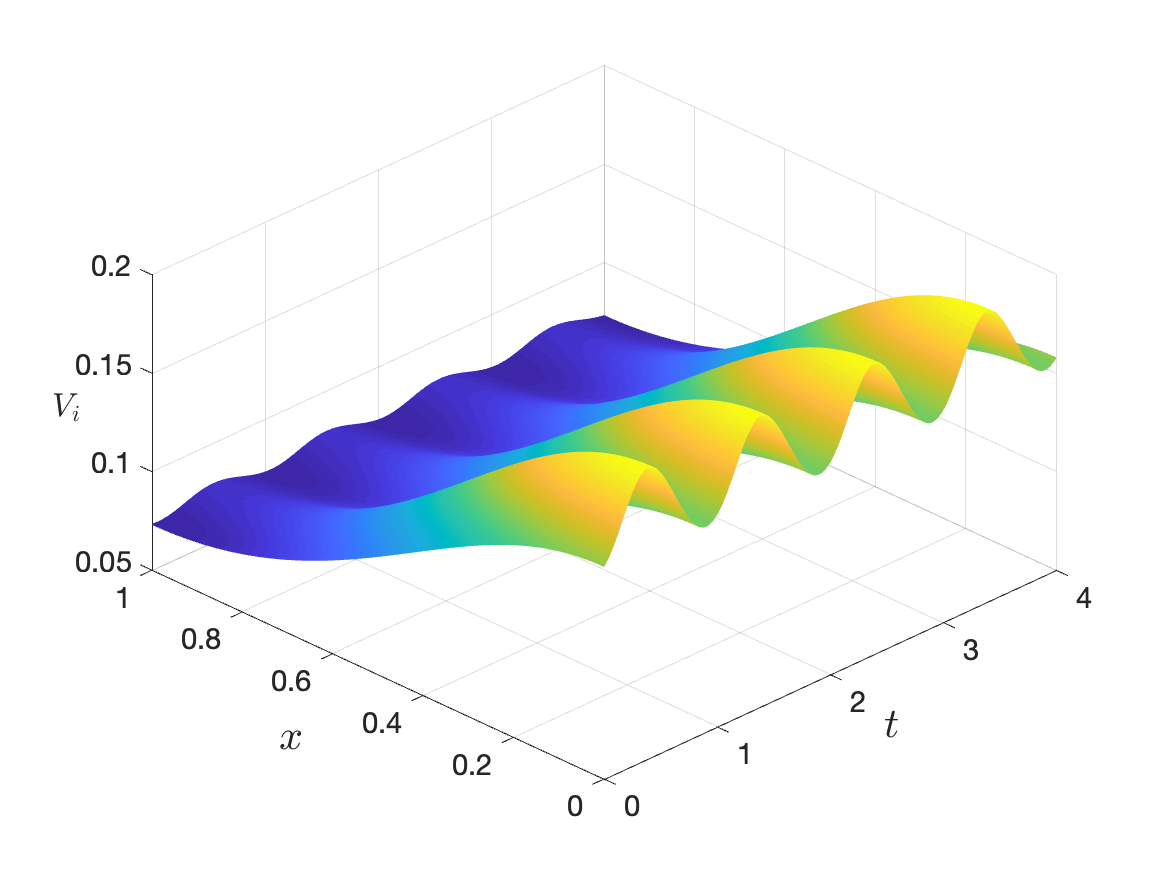}\end{minipage}\\[-2mm]
		\caption{\label{fig:PPS} \small Positive periodic solution when $p_1=p_2=p_3=p_4=0.5$ and $q_1=q_2=q_3=q_4=0$.}\vspace{-3mm}
	\end{figure}

A natural question arises: does greater temporal or spatial heterogeneity lead to a higher likelihood of disease propagation?

We first demonstrate the effect of temporal heterogeneity on $\mathcal{R}_0$ by showing  the level sets of $\mathcal{R}_0$ with respect to $p_3$ and $p_4$, as well as $q_3$ and $q_4$ by fixing $p_1=p_2=q_1=q_2=0$. It can be seen from Figures \ref{fig:R0:time}(a) and \ref{fig:R0:time}(b) that the temporal heterogeneity on $\mathcal{R}_0$ is very weak when we only consider the temporal heterogeneity on $a_1$ and $a_2$ or $b_1$ and $b_2$. However, temporal heterogeneity will have significant effect when we consider the temporal  heterogeneity on all of $a_1$, $a_2$, $b_1$ and $b_2$, as shown in Figures \ref{fig:R0:time}(c) and \ref{fig:R0:time}(d). In these two figures, $\mathcal{R}_0$ may be decreasing, increasing and non-monotone with respect to $p_3$ and $p_4$, which is very complicated.
  \begin{figure}[H]
  \centering
\subfigure[Level sets of $\mathcal{R}_0$ v.s. $p_3$, $p_4$ with  $q_3=q_4=0$]{\includegraphics[width=0.45\textwidth]{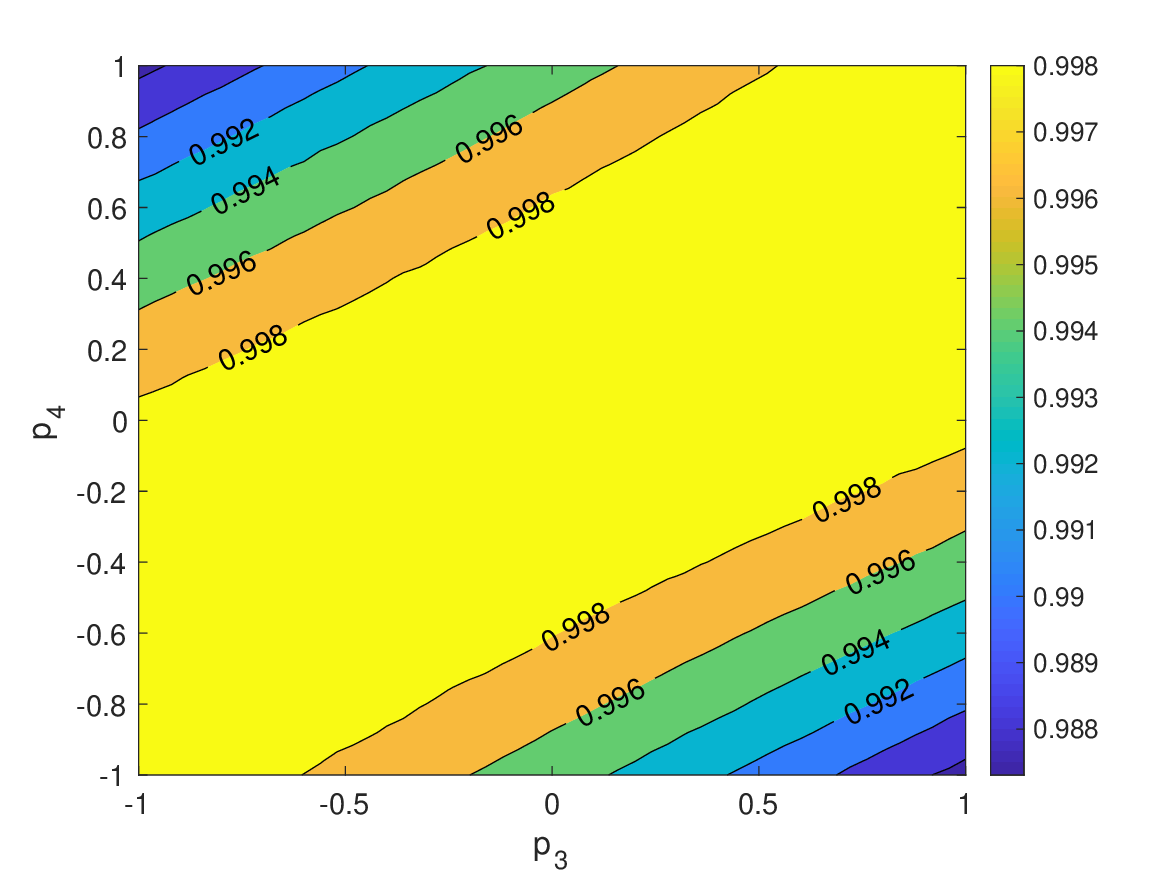}
\label{fig-sub3a}}\hspace{1mm}
	\subfigure[Level sets of $\mathcal{R}_0$ v.s. $q_3$, $q_4$ with $p_3=p_4=0$]
{\includegraphics[width=0.45\textwidth]{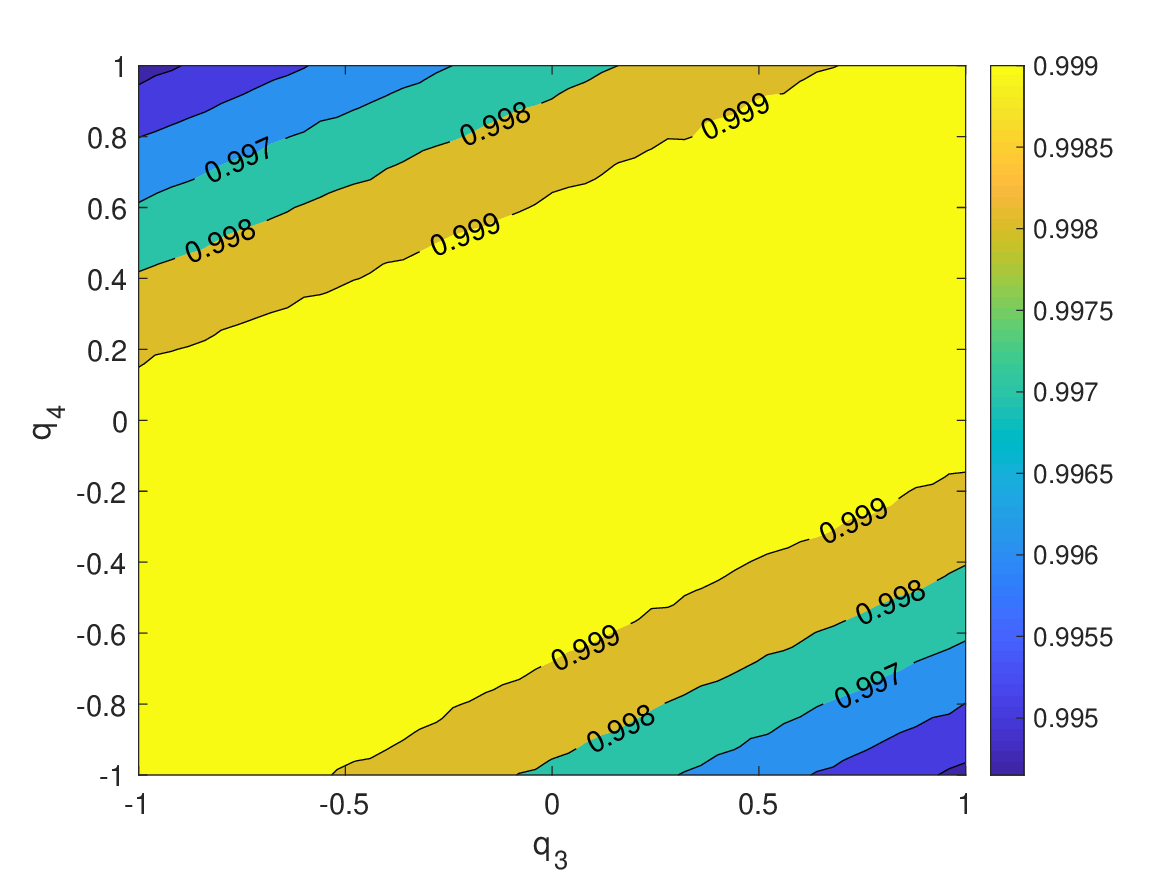}\label{fig-sub3b}}\\[2mm]
  \centering\subfigure[Level sets of $\mathcal{R}_0$ v.s. $p_3=q_3$, $p_4=q_4$]{
\includegraphics[width=0.45\textwidth]{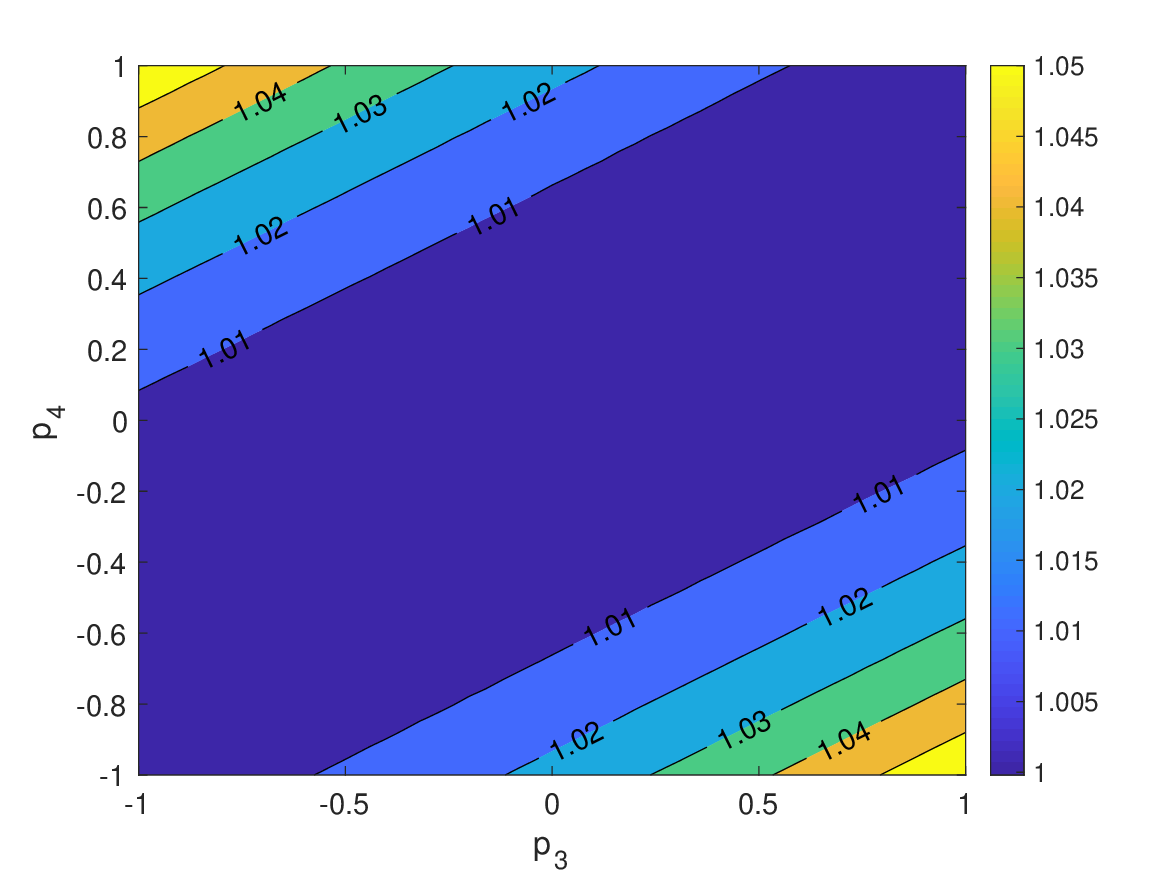}
\label{fig-sub3c}}\hspace{1mm}
	\subfigure[Level sets of $\mathcal{R}_0$ v.s. $p_3=-q_3$, $p_4=-q_4$]{
\includegraphics[width=0.45\textwidth]{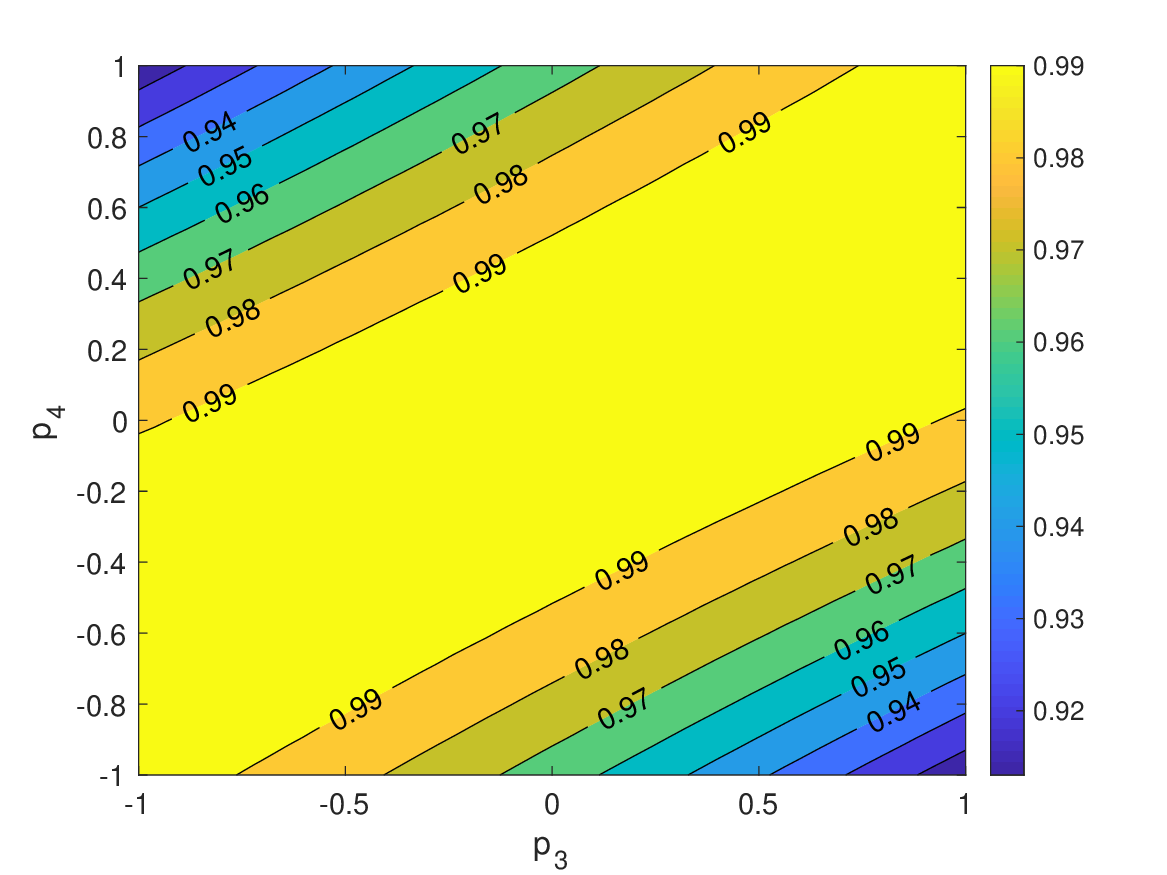}\label{fig-sub3d}}\\[-2mm]
\caption{\small Level sets of $\mathcal{R}_0$ with  $p_1=p_2=q_1=q_2=0$. The two figures in the first row show that the effect of $p_3$ and $p_4$, as well as, $q_3$ and $q_4$ on $\mathcal{R}_0$ are very weak. However, the superposition of weak effects may lead to significant influences, as shown in the two figures in the second row. \label{fig:R0:time}}\vspace{-3mm}
	\end{figure}

We next present the level set of $\mathcal{R}_0$ with respect to $p_1$ and $p_2$, as well as $q_1$ and $q_2$, which is to investigate the influence of spatial heterogeneity on $\mathcal{R}_0$ by fixing $p_3=p_4=q_3=q_4=0$. From Figure \ref{fig-sub3a}, we can see that $\mathcal{R}_0$ first decreases  and then increases with respect to $p_1$ (fixing $p_2$) and $p_2$ (fixing $p_1$), respectively. However, from Figure \ref{fig-sub3b}, $\mathcal{R}_0$ is  monotonically increasing, decreasing or first decreasing and then increasing with respect to $q_1$ when $q_2=0.8$, $q_2=0$, $q_2=-0.8$, respectively. Setting $p_1 = q_1$ and $p_2 = q_2$ may weaken the impact on $\mathcal{R}_0$ compared to changing only $p_1$ and $p_2$, or only $q_1$ and $q_2$ (see Figure \ref{fig-sub3c}). In contrast, setting $p_1 = -q_1$ and $p_2 = -q_2$ may enhance the impact on $\mathcal{R}_0$ compared to the same individual changes (see Figure \ref{fig-sub3d}). This implies that the effect of spatial heterogeneity on $\mathcal{R}_0$ is quite complicated, which depends on the parameters as well as the degree of spatial heterogeneity.

Finally, we focus on the impact of spatiotemporal factors in birth rates on $\mathcal{R}_0$ by taking death rates are constants, i.e., $q_1=q_2=q_3=q_4=0$.  Figure \ref{fig:R0:3} illustrates the influence of temporal and spatial heterogeneity on $\mathcal{R}_0$. It is easy to see that the change in $ p_1$ (or $p_2$) may have a greater impact on $\mathcal{R}_0$ than the change in $ p_3 $ (or $p_4$).  This implies that spatial heterogeneity may have greater influence on $\mathcal{R}_0$ than the temporal one in general. When we take birth rates as constants ($p_1=p_2=p_3=p_4=0$) and examine the impact of spatiotemporal factors in death rates on the basic reproduction number $\mathcal{R}_0$, we can see a similar phenomenon. Omit the simulation graphics here.

\begin{figure}[H]
		\centering
	\subfigure[Level sets of $\mathcal{R}_0$ v.s. $p_1$, $p_2$ with $q_1=q_2=0$]{\includegraphics[width=0.45\textwidth]{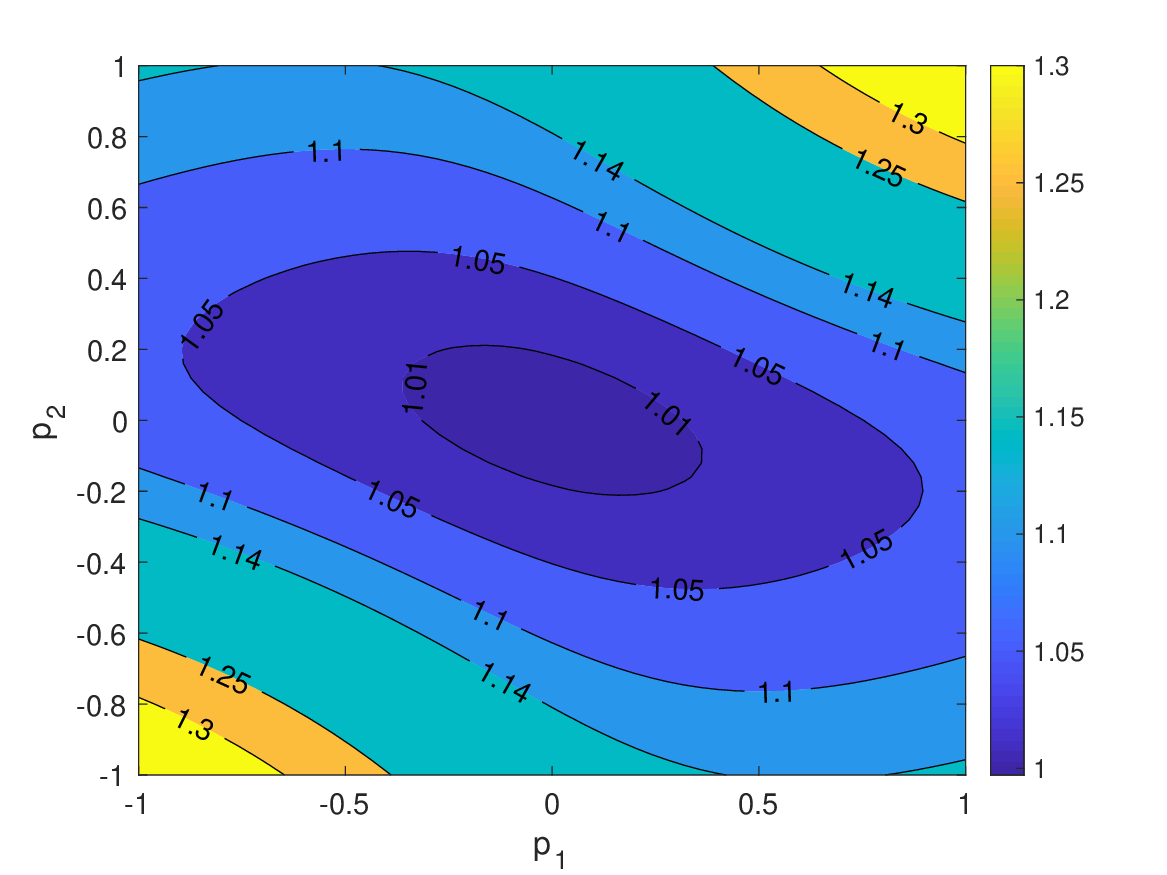}
	\label{fig-sub4a}}
	\subfigure[Level sets of $\mathcal{R}_0$ v.s. $q_1$, $q_2$ with $p_1=p_2=0$]{\includegraphics[width=0.45\textwidth]{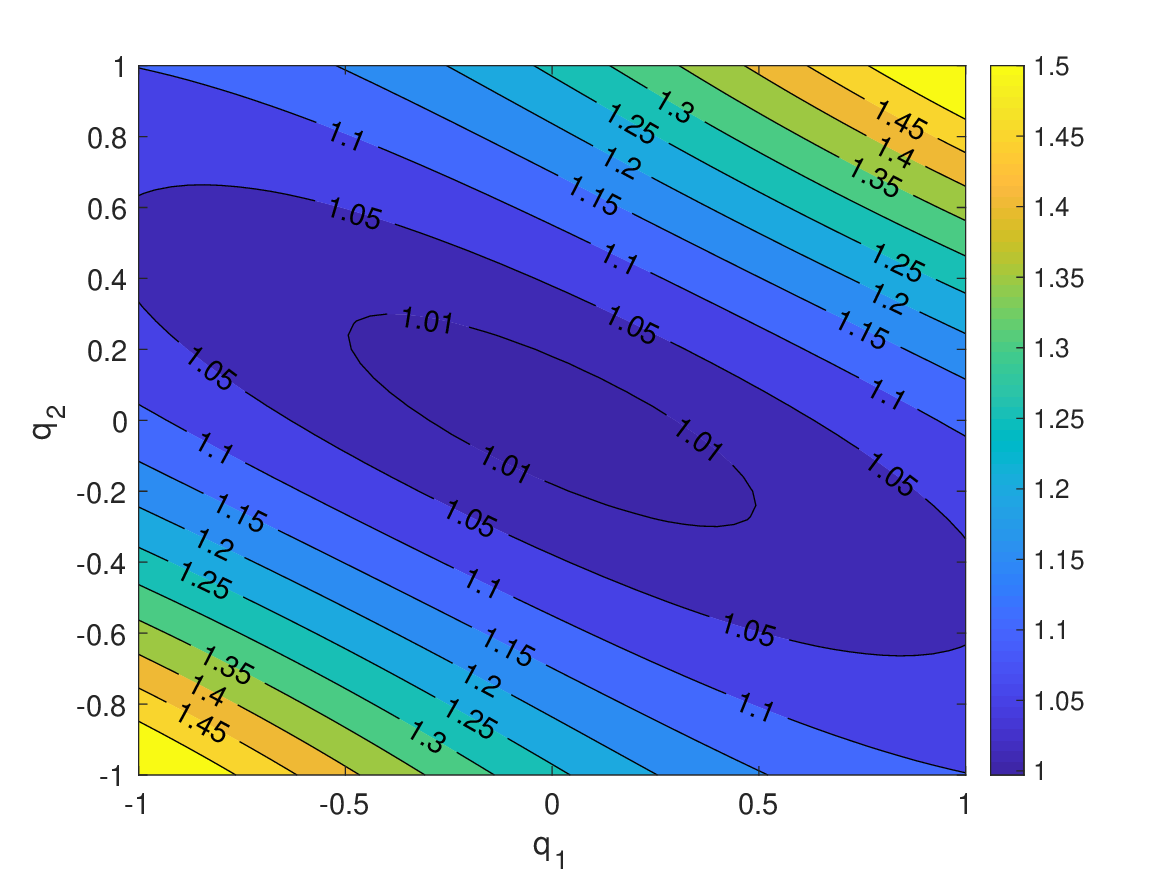}
	\label{fig-sub4b}}\\
		\centering
	\subfigure[Level sets of $\mathcal{R}_0$ v.s. $p_1=q_1$, $p_2=q_2$]
{\includegraphics[width=0.45\textwidth]{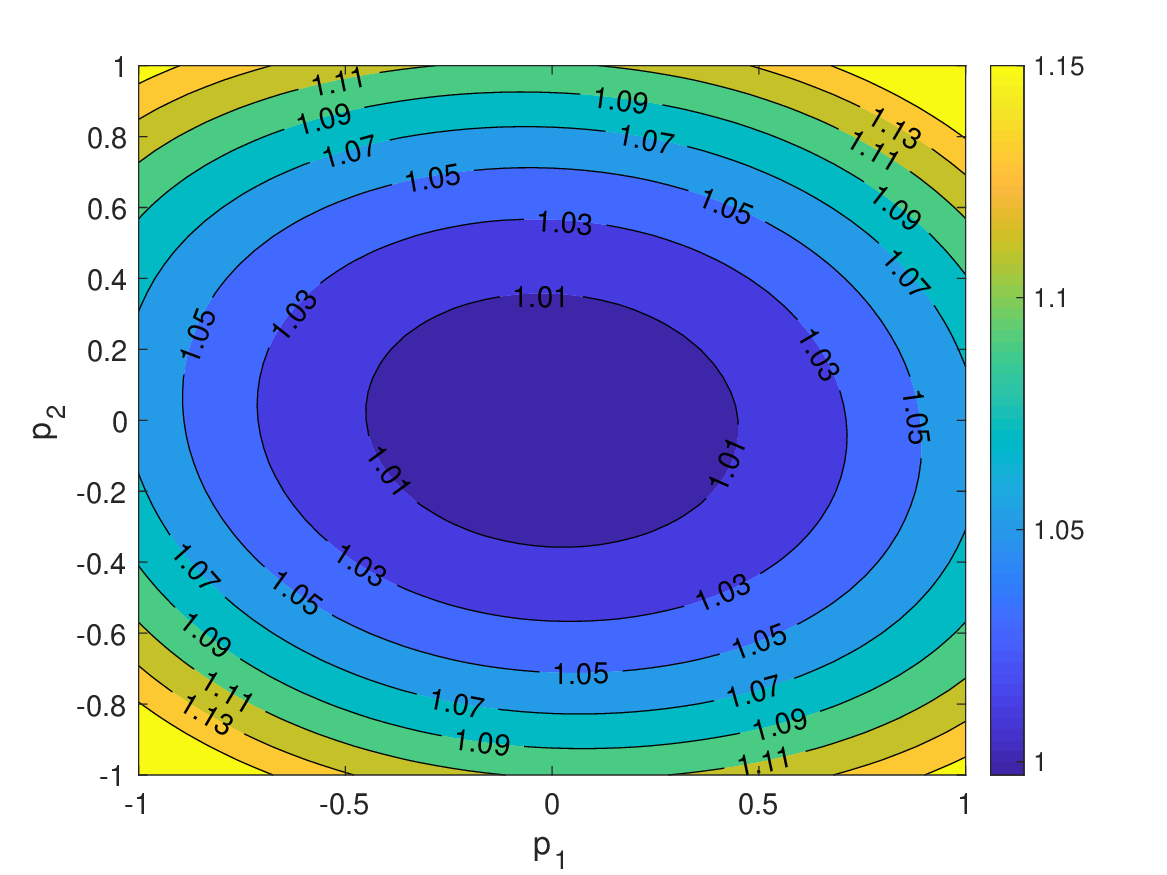}
		\label{fig-sub4c}}
	\subfigure[Level sets of $\mathcal{R}_0$ v.s. $p_1=-q_1$, $p_2=-q_2$]{\includegraphics[width=0.45\textwidth]{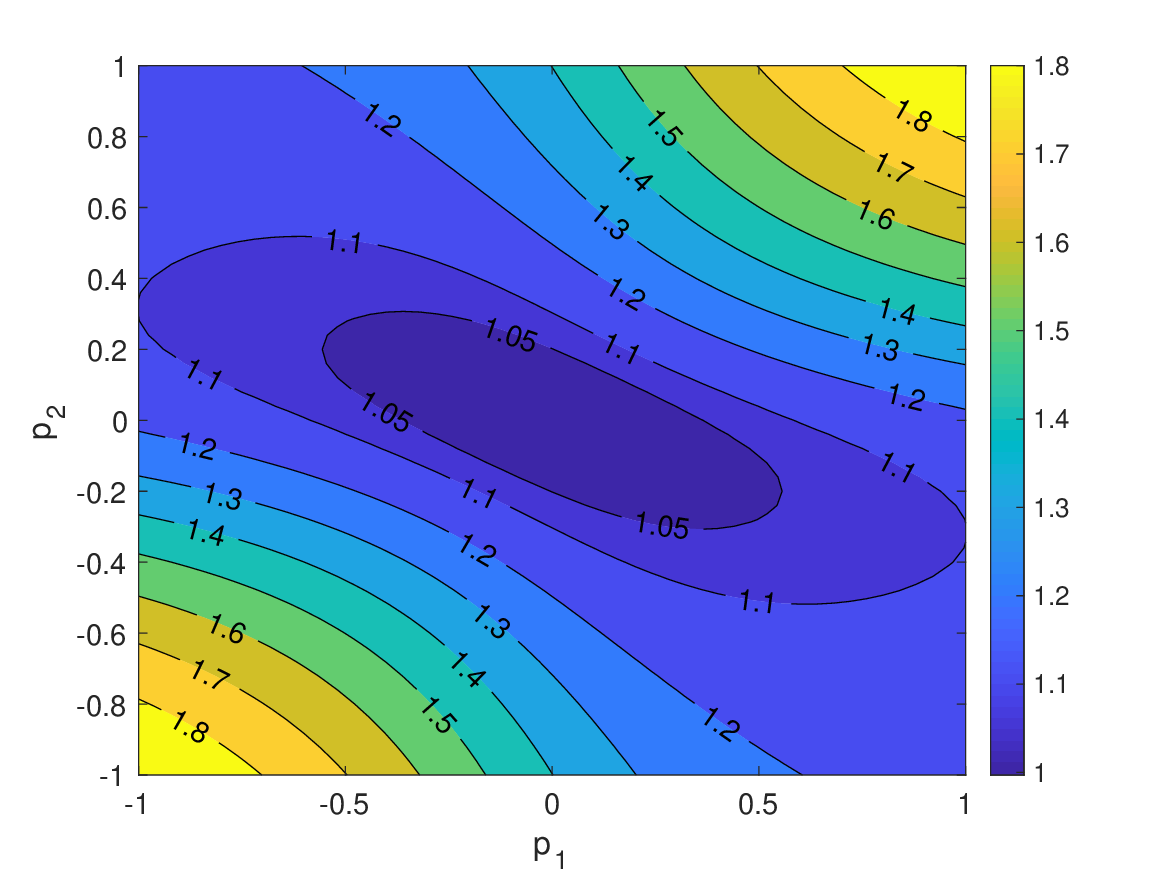}
		\label{fig-sub4d}}\\[-2mm]
	\caption{\small Level sets of $\mathcal{R}_0$ with $p_3=p_4=q_3=q_4=0$.  Simultaneous changes of multiple parameters, compared to changes of individual parameters, can either increase or decrease the impact on $\mathcal{R}_0$.\label{fig:R0:2}}\vspace{-5mm}
	\end{figure}	

\begin{figure}[htb]
	\centering
	\subfigure[Level sets of $\mathcal{R}_0$ v.s. $p_1$, $p_3$ with $p_2=p_4=0$]{\includegraphics[width=0.47\textwidth]{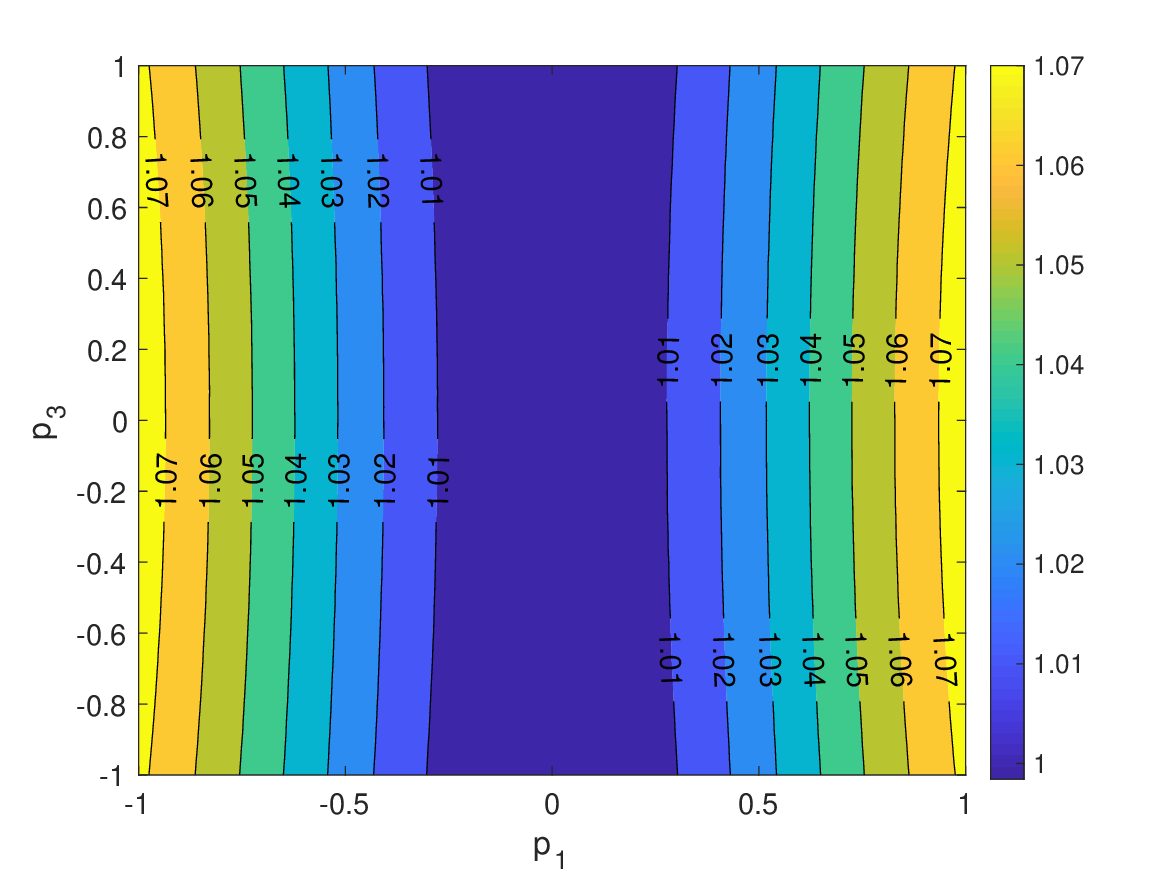}
		\label{fig:sub:13}}	\;
	\subfigure[Level sets of $\mathcal{R}_0$ v.s. $p_2$, $p_4$ with $p_1=p_3=0$]{\includegraphics[width=0.47\textwidth]{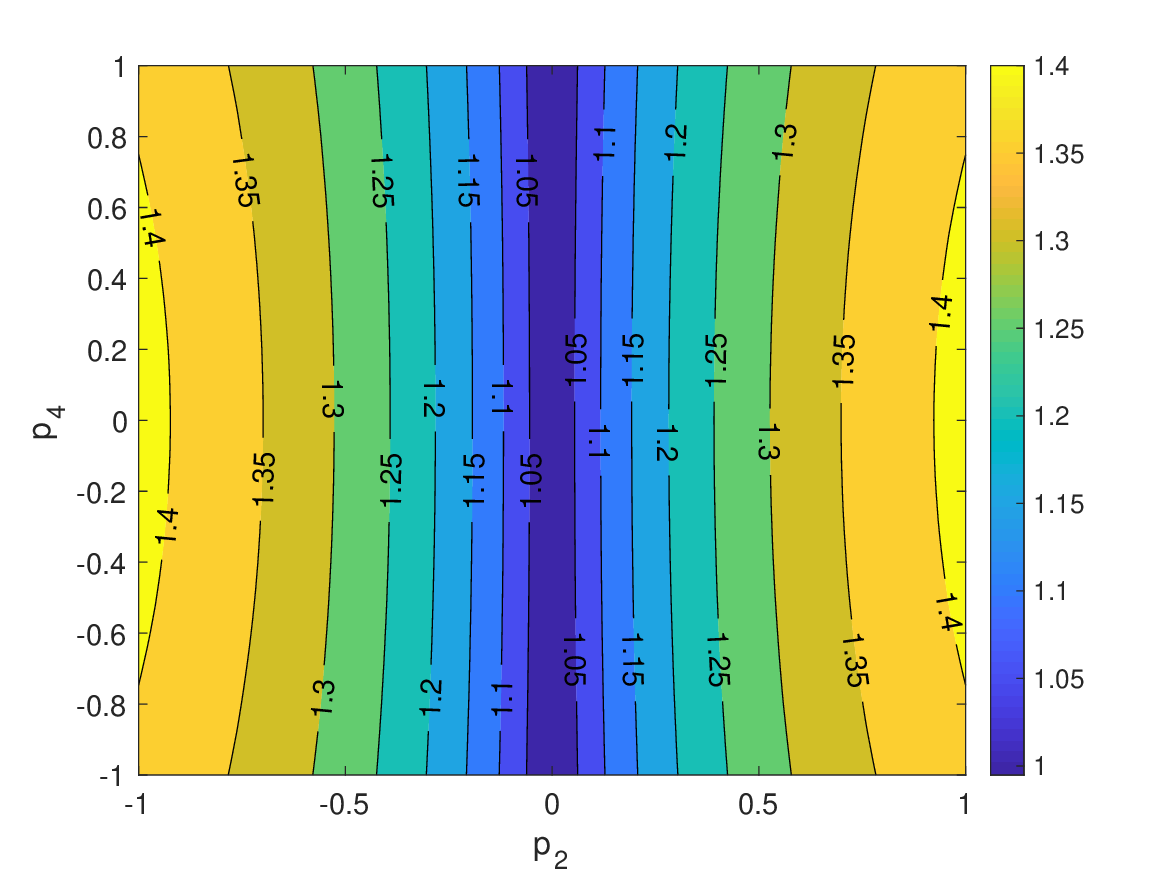}
		\label{fig:sub:24}}	\\[-2mm]
	\caption{\small Level sets of $\mathcal{R}_0$ with $q_1=q_2=q_3=q_4=0$. The spatial heterogeneity may have greater influence on $\mathcal{R}_0$ than the temporal one in general. \label{fig:R0:3}}\vspace{-3mm}
	\end{figure}	

We finish this section with a brief discussion in a biological sense. Recall that $a_1(x,t)$ and $a_2(x,t)$ are the birth rates of susceptible birds and mosquitoes, while $b_1(x,t)$ and $b_2(x,t)$ are the death rates of susceptible birds and mosquitoes. The distribution of sources and the natural environment may yield spatial heterogeneity in these populations, and variations in seasonality may lead to temporal heterogeneity. The spatial and temporal heterogeneity of these parameters first affects the distribution of populations and then influences disease propagation. In general, we can see that spatial heterogeneity increases the risk of disease transmission, while temporal heterogeneity can either increase or decrease this risk.

\section{Discussion}\setcounter{equation}{0} {\setlength\arraycolsep{2pt}

In this paper, we investigate a periodic West Nile virus transmission model \eqref{1.1} that considers the susceptible and infectious populations in birds and mosquitoes, accounting for seasonality and spatial heterogeneity. When humans are considered as hosts, this model can also be used to describe the transmission mechanism of the Zika virus. Using the upper and lower solutions method, we have established the complete dynamic properties of this model. Specifically, we have derived a threshold-type result concerning the existence and uniqueness of positive periodic solution, as well as its global stability in terms of $\mathcal{R}_{0,1}$, $\mathcal{R}_{0,2}$ and $\mathcal{R}_0$:\vspace{-2mm}
 \begin{enumerate}[$(1)$]%[leftmargin=1mm]
\item If $\mathcal{R}_{0,1} \le 1$, then the birds and infected mosquitoes will die out, and so does the disease.
\item If $\mathcal{R}_{0,2} \le 1 $, then the mosquitoes and infected birds will die out, and so does the disease.
\item If $\mathcal{R}_{0,1}, \mathcal{R}_{0,2}>1$ and $\mathcal{R}_0 \le 1$, then the disease will die out.
\item If $\mathcal{R}_{0,1}, \mathcal{R}_{0,2}>1$ and $\mathcal{R}_0>1$,
then the susceptible and infected birds and mosquitoes will eventually stabilize at a positive periodic state.\vspace{-2mm}
\end{enumerate}

We next discuss how to control the disease:\vspace{-2mm}
\begin{enumerate}[$\bullet$]
	\item It is not hard to verify that $\mathcal{R}_{0,j}$  is increasing with respect to $a_j(x,t)$ (birth rate) and decreasing with respect to $b_j(x,t)$ (death rate). Therefore, reducing the birth rates or increasing the mortality rates of birds and viruses can effectively curb the spread of the virus (see the above conclusion (1) and (2)). However, since birds play a crucial role in the ecosystem, such measures should not be implemented against them. The public has long acknowledged that controlling mosquito breeding is the most effective strategy for preventing disease transmission.
	\item Given the significant challenges in managing mosquito populations and the necessity to protect birds without controlling them -- keeping $\mathcal{R}_{0,1}, \mathcal{R}_{0,2}>1$ -- it is crucial to implement measures that decrease $\mathcal{R}_0$ to ensure that it is less than $1$. This purpose can be achieved by reducing $\ell_1(x,t)$ (transmission rates from infected virus uninfected birds) and/or $a_2(x,t)$ (birth rate of susceptible mosquitoes) and/or $\ell_2(x,t)$ (transmission rates   from infected virus to uninfected mosquitos), or enlarging $b_2(x,t)$ (death rate of susceptible mosquitoes). This indicates that vector control strategies play a crucial role in the spread of viruses, such as reducing the number of vector populations and bite rates. Therefore, we can propose some potential control strategies: using insecticides to kill mosquitoes; strengthening the management of vector breeding grounds near human habitats, such as reservoirs, ponds, and water pits, to reduce vector reproduction and growth; implementing personal protective measures, such as screens, door curtains, and mosquito nets, as well as applying mosquito repellents on exposed skin or clothing to avoid contact with mosquitoes and reduce bite rates; and employing transmission-blocking strategies to decrease the likelihood of transmission from infected vectors to hosts with each bite.\vspace{-2mm}
\end{enumerate}

In model \qq{1.1}, effects of recovered hosts is neglected, and terms $H_uV_i$ and $H_iV_u$ represent bilinear incidences. We know that the probability of an infected mosquito biting healthy birds and the probability of a healthy mosquito biting infected birds are $\frac{H_u}{H_u+H_i}$ and $\frac{H_i}{H_u+H_i}$, respectively. It is more reasonable to replace bilinear incidences with standard incidences and consider the effects of recovered hosts (after the virus of the infected host disappears, the infected host becomes a susceptible host again). In such situation, the modified version of \qq{1.1} can be written as
 \bes\left\{\begin{aligned}
&\partial_t H_u=\nabla\cdot d_1(x,t)\nabla H_u+a_1(x,t)(H_u+H_i)-b_1(x,t)H_u\\
&\hspace{17mm}-c_1(x,t)(H_u+H_i)H_u
-\ell_1(x,t)\dd\frac{H_u}{H_u+H_i}V_i+\gamma(x,t)H_i,&&x\in\oo,\; t>0,\\
&\partial_t H_i=\nabla\cdot d_1(x,t)\nabla H_i+\ell_1(x,t)\dd\frac{H_u}{H_u+H_i}V_i-b_1(x,t)H_i\\
&\hspace{17mm}-c_1(x,t)(H_u+H_i)H_i-\gamma(x,t)H_i,\!\!&&x\in\oo,\; t>0,\\
&\partial_t V_u=\nabla\cdot d_2(x,t)\nabla V_u+a_2(x,t)(V_u+V_i)-b_2(x,t)V_u\\
&\hspace{17mm}
-c_2(x,t)(V_u+V_i)V_u-\ell_2(x,t)\dd\frac{H_i}{H_u+H_i}V_u,&&x\in\oo,\; t>0,\\
&\partial_t V_i=\nabla\cdot d_2(x,t)\nabla V_i+\ell_2(x,t)\dd\frac{H_i}{H_u+H_i}V_u-b_2(x,t)V_i\\
&\hspace{17mm}-c_2(x,t)(V_u+V_i)V_i,&&x\in\oo,\; t>0,\\
&\alpha_1\dd\frac{\partial H_u}{\partial\nu}+\beta_1(x,t)H_u=\alpha_1\dd\frac{\partial H_i}{\partial\nu}+\beta_1(x,t)H_i=0,\, &&x\in\partial\oo,\; t>0,\\[1mm]
& \alpha_2\dd\frac{\partial V_u}{\partial\nu}+\beta_2(x,t)V_u=\alpha_2\dd\frac{\partial V_i}{\partial\nu}+\beta_2(x,t)V_i=0,\, &&x\in\partial\oo,\; t>0,\\
&\big(H_u(x, 0), H_i(x, 0), V_u(x, 0), V_i(x, 0)\big)=\big(H_{u0}(x), H_{i0}(x), V_{u0}(x), V_{i0}),&&x\in\oo,\vspace{-2mm}
\end{aligned}\rr.\label{4.10}\ees
where $\gamma(x,t)$ is the recovery rate from the loss of virus in infected hosts to uninfected hosts. When $\alpha_1=\alpha_2\equiv1$ (this is reasonable because $H_u(x,t)+H_i(x,t)\not=0$ in $\bar\Omega\times(0,+\yy)$), it can be shown that the conclusions about problem \qq{1.1} are also true for problem \qq{4.10} except the first and last results in Theorem \ref{th3.1}(3) using the method in the present paper.

\end{document}